\pgfplotsset{compat=1.14}
\newtheorem{theorem}{Theorem}
\newtheorem{assumption}{Assumption}
\newtheorem{remark}{Remark}
\newtheorem{lemma}{Lemma}
\crefname{assumption}{assumption}{assumptions}
\numberwithin{assumption}{section}
\newtheorem{lem}{Lemma}
\numberwithin{lem}{section}
\newenvironment{myproof}[1][Proof]%
  {%
   \par\noindent{\bfseries\upshape #1\ }%
  }%
  {\qed}
\newcommand{\R}{\mathbb R}
\newcommand{\PP}{\mathbb P}
\newcommand{\E}{\mathbb E}
\newcommand*\diff{\mathop{}\!\mathrm{d}}
\newcommand{\DS}{\displaystyle}
\newcommand{\bs}{\boldsymbol}
\def\simiid{\overset{\text{iid}}{\sim}}
\newcommand{\prt}[1]{\left( \, #1  \, \right)}
\newcommand{\crc}[1]{\left[ \, #1  \, \right]}
\def\hat{\widehat}
\title{A nonasymptotic law of iterated logarithm for general $M$-estimators}
\author{%
  Victor-Emmanuel~Brunel,
  Arnak~Dalalyan,
  Nicolas~Schreuder \\
  CREST, ENSAE\\
  Palaiseau, FRANCE\\
  \texttt{nicolas.schreuder@ensae.fr} \\
  % \And
  % Coauthor \\
  % Affiliation \\
  % Address \\
  % \texttt{email} \\
  % \And
  % Coauthor \\
  % Affiliation \\
  % Address \\
  % \texttt{email} \\
}
\begin{document}

\maketitle

\begin{abstract}
$M$-estimators are ubiquitous in machine learning and statistical
learning theory. They are used both for defining prediction strategies 
and for evaluating their precision. In this paper, we propose the first 
non-asymptotic ``any-time'' deviation bounds for general $M$-estimators, 
where ``any-time'' means that the bound holds with a prescribed probability 
for every sample size. These bounds are non-asymptotic versions of the 
law of iterated logarithm. They are established under general assumptions 
such  as Lipschitz continuity of the loss function and (local) curvature 
of the population risk. These conditions are satisfied for most examples 
used in machine learning, including those ensuring robustness to outliers 
and to heavy tailed distributions. As an example of application, we consider
the problem of best arm identification in a parametric stochastic multi-arm 
bandit setting. We show that the established bound can be converted into
a new algorithm, with provably optimal theoretical guarantees. Numerical 
experiments illustrating the validity of the algorithm are reported.
\end{abstract}

\section{Introduction}\label{sec:1}
Perhaps the most fundamental theorems in statistics are the law of large numbers (LLN) and the central limit theorem (CLT). Morally, they state that a sample average converges almost surely or in probability to the population average, and if one zooms in by multiplying by a square root factor, a much weaker form of stochastic convergence still holds, namely, convergence in distribution towards a Gaussian law. A fine intermediate result shows what happens in between the two scales: the law of iterated logarithm (LIL). By zooming in slightly less than in the CLT, \textit{i.e.}, by rescaling the sample average with a slightly smaller factor than in the CLT, it is possible to gain a guarantee for infinitely many sample sizes, almost surely. In practice, however, the LIL has limited applicability, since it does not specify for which sample sizes the guarantee holds. The goals of the present work are 
(a) to lift this limitation, by proving a LIL valid for every sample size, and (b) to 
extend the LIL (known to be true for sample averages) to general $M$-estimators.   

The precise statement of the LIL, discovered by \citet{Khintchine1924,Kolmogoroff1929} 
almost a century ago, is as follows: For a sequence of 
iid random variables  $\{Y_i\}_{i\in\mathbb{N}}$
with mean $\theta$ and variance $\sigma^2<\infty$, the sample averages 
$\Bar Y_n = (Y_1+\ldots+Y_n)/n$ 
satisfy the relations
\begin{align}
\liminf_{n\to\infty} \frac{\sqrt{n}\,(\Bar Y_n-\theta)}{
\sigma\sqrt{2\ln\ln n}} =-1 
\quad\text{and}\quad
\limsup_{n\to\infty} \frac{\sqrt{n}\,(\Bar Y_n-\theta)}{
\sigma\sqrt{2\ln\ln n}} =1,\quad 
\text{almost surely}. 
\end{align}
This provides a guarantee on the deviations of the sample 
average as an estimator of
the mean $\theta$, since it yields that with probability one, 
there is a $n_0\in\mathbb{N}$ such that $|\Bar{Y}_n-\theta|
\le \sigma (\nicefrac{2\ln\ln n}{n})^{1/2}$ for every $n\ge n_0$. 
As compared to the deviation guarantees provided 
by the central limit theorem, the one of
the last sentence has the advantage of being valid for 
any sample size large enough. This advantage is gained 
at the expense of a factor $(\ln\ln n)^{1/2}$. Akin for 
the classic version of the CLT, the applicability of the 
LIL is limited by the fact that it is hard to get
any workable expression of $n_0$. 
 
In the case of the CLT and its use in statistical learning, 
the drawback related to $n_0$ was lifted by exploiting 
concentration inequalities, such as the Hoeffding or the 
Bernstein inequalities, that can be seen as non-asymptotic 
versions of the CLT. For bounded random variables, the 
aforementioned concentration inequalities imply that for 
a prescribed tolerance level $\delta\in(0,1)$, for every 
$n\in\mathbb{N}$, the event\footnote{Here $C$ is a 
universal constant.}  $\mathcal{A}_n=\{|\Bar{Y}_n-\theta|
\le C(\nicefrac{\ln(1/\delta)}{n})^{1/2}\}$ holds with 
probability at least $1-\delta$. Such a deviation bound 
is satisfactory in a batch setting, when all the 
data are available in advance. In contrast, when data 
points are observed sequentially, as in on-line learning, 
or when the number of acquired data points depends on the
actual values of the data points, the event of interest 
is $\Bar{\mathcal A}_{N} = \mathcal{A}_1\cap \ldots\cap
\mathcal{A}_N$ or even a version of it in which $N$ can be
replaced by $\infty$. One can use the union bound to ensure
that $\Bar{\mathcal A}_{N}$ has a probability at least 
$1-N\delta$ but this is too crude. Furthermore, replacing in 
$\mathcal A_n$ the confidence $\delta$ by $\delta/n^2$, 
we get coverage $1-\frac{\pi^2}{6}\delta$, valid for 
any sample size $n$ for an interval of length 
$O((\nicefrac{\ln n}{n})^{1/2})$. This result, obtained
by a straightforward application of the union bound, is 
sub-optimal. A remedy to such a sub-optimality---in the 
form of a nonasymptotic version of the LIL---was proposed by
\cite{jamieson2014lil} and further used by 
\cite{Kaufman16,Kaufmann_Koolen,howard2018uniform}. 
In addition, its relevance for 
online learning was demonstrated by deriving guarantees 
for the best arm selection in a multi-armed bandit setting. 
Note that these recent results apply exclusively to the 
sample mean; there is no equivalent of these bounds for 
other types of estimators. 

In this work, we establish a non-asymptotic LIL in a general 
setting encompassing many estimators, far beyond the sample 
average. More precisely, we focus on  the class of (penalized) 
$M$-estimators comprising the sample average but also the sample 
median, the  quantiles, the least-squares estimator, etc. Of 
particular interest to us are estimators that are robust to 
outliers and/or to heavy tailed distributions. This is the case 
of the median, the quantiles, the Huber estimator, etc. 
\citep{huber1964robust, Huber_book}. It is well known that 
under mild assumptions, $M$-estimators are both consistent 
and asymptotically normal, \textit{i.e.}, a suitably adapted 
version of the LLN and the CLT applies to them \citep{vanderVaart,
Portnoy,Collins}. Moreover, some versions of the LIL were 
also shown for $M$-estimators \citep{arcones1994some,he1995law}, 
with little impact in statistics and machine learning, because 
of the same limitations as those explained above for the 
standard LIL. Our contributions complement these studies by 
providing a general non-asymptotic LIL for $M$-estimators.  

We apply the developed methodology to the problem of multi-armed 
bandits when the rewards are heavy tailed or contaminated by 
outliers. In such a context, \cite{Brunel} tackled the problem 
of best median arm identification; this corresponds to
replacing the average regret by the median regret. The relevance of this approach relies 
on the fact that even a small number of contaminated 
samples obtained from each arm may make the corresponding 
means arbitrarily large. The method proposed in \cite{Brunel} 
is a suitable adaptation of the well-known upper 
confidence band (UCB) algorithm. In that setup, would 
it be possible to improve the upper bounds on the sample 
complexity of their algorithm---similarly 
to \cite{jamieson2014lil}---by using some version of 
the uniform LIL for empirical medians or, more generally, 
for robust estimators? Our main results yield a positive 
answer to this question. 

The rest of the paper is organized as follows. The next 
section contains the statement of the LIL in a univariate 
setting and provides some examples satisfying the required 
conditions. A mutlivariate version of the LIL for penalized 
$M$-estimators is presented in \Cref{sec:3}. An application 
to on-line learning is carried out in \Cref{sec:4}, while
a summary of the main contributions and some future 
directions of research are outlined in \Cref{sec:5}. 
Detailed proofs are deferred to the supplementary material.

\section{Uniform law of iterated logarithm for \texorpdfstring{$M$}{M}-estimators}\label{sec:2}

%% TODO %%

%%%%%%%%%%

In this section, we focus on the case of univariate $M$-estimators, 
which are a natural extension of the empirical mean, 
especially in robust setups (see \cite{huber1964robust,Maronna}
as well as the recent work by \cite{loh2017} and the references 
therein). We consider a sequence $Y,Y_1,Y_2,Y_3,\ldots$ of i.i.d.\ 
random variables in some arbitrary space $\mathcal Y$ with probability distribution $\PP_Y$ and we let 
$\phi:\mathcal Y\times \Theta\rightarrow\R$ be a given loss function, where $\Theta$ is an open interval in $\R$. We make the two following assumptions on the loss $\phi$.
\begin{assumption}
\label{as:finite_expectation}
For all $\theta\in\Theta$, the random variable $\phi(Y, \theta)$ has a finite expectation.
\end{assumption}
\begin{assumption}
\label{as:convex_phi}
The function $\phi(Y,\cdot)$ is convex 
$\PP_Y$-almost surely and $\phi(Y,\theta)
\rightarrow\infty$ as $\theta$ approaches the boundary of $\Theta$, $\PP_Y$-almost surely (we say that the $\phi(Y,\cdot)$ is convex and coercive).
\end{assumption}
We define the population risk $\Phi(\theta)=
\E\crc{\phi(Y,\theta)}$  and, for all integers $n\geq 1$, 
the empirical risk $\hat\Phi_n(\theta)=\frac{1}{n}\sum_{i=1}^n
\phi(Y_i,\theta)$.
We denote by $\theta^*$ a minimizer of $\Phi$ on $\Theta$, and by 
$\hat\theta_n$ a minimizer of $\hat\Phi_n$ on $\Theta$, for all 
$n\geq 1$. \Cref{as:convex_phi} requires from the loss $\phi$ to approximately have a U-shape in order to guarantee that the quantities $\theta^*$ and $\hat{\theta}_n$ are well defined. We need two more 
assumptions to state our result.
\begin{assumption}
\label{as:Phi_strongly_convex}
The minimizer $\theta^*$ of $\Phi$ is unique and there exist two positive constants $r$ and $\alpha$ such that for all $\theta\in\Theta$ with $|\theta-\theta^*|\leq r$, $\DS \Phi(\theta)\geq \Phi(\theta^*)+(\nicefrac{\alpha}{2})(\theta-\theta^*)^2$.
\end{assumption}

\begin{assumption}
\label{as:SG}
There exists a positive constant $\sigma^2$ such that the random variables $\phi(Y,\theta)-\phi(Y,\theta^*)$ are $\sigma^2(\theta-\theta^*)^2$-sub-Gaussian\footnote{See, e.g., \citep[Section 3.1]{koltchinskii} for a definition of centered sub-Gaussian random variables and their properties. A non-zero mean random variable is sub-Gaussian if its centered version is sub-Gaussian.} 
for all $\theta\in\Theta$.
\end{assumption}
\Cref{as:Phi_strongly_convex} requires from $\Phi$ to have a positive curvature in
a neighborhood of the oracle $\theta^*$. It is weaker than the local strong convexity
of $\Phi$. \Cref{as:SG} is a smoothness condition on $\phi(Y,\cdot)$. In particular, it is fulfilled if $\phi(Y,\cdot)$ is $\eta$-Lipschitz with a sub-Gaussian variable $\eta$.  
We stress that the function $\phi$ is not assumed differentiable and that $Y$ is not necessarily sub-Gaussian. We are now ready to state our first theorem on the uniform concentration of $M$-estimators.

\begin{theorem} \label{MainThm1d}
	Let \Cref{as:finite_expectation,as:convex_phi,as:Phi_strongly_convex,as:SG} hold. Then, for any $\delta\in (0, 1)$,
	\begin{equation}\label{main:1}
	\mathbb{P} \left(\forall n\ge n_0,  \quad \lvert\hat\theta_n-\theta^*\rvert\leq 
		t_{n,\delta}^{\rm LIL}:=\frac{3.4\sigma}{\alpha}
		\sqrt{\frac{\ln\ln 2n +	0.72\ln(\nicefrac{10.4}{\delta})}{n}}
		\right) \geq 1 - \delta,
	\end{equation}
where $n_0=n_0(\alpha,r,\delta)$ is the smallest integer $n\ge 1$ for which $t_{n,\delta}^{\rm LIL}
\le r$.
\end{theorem}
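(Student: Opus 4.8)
The plan is to turn the deviation event $\{|\hat\theta_n-\theta^*|\ge t_{n,\delta}^{\rm LIL}\}$ into a line‑crossing event for a sub‑Gaussian random walk, and then to control that event simultaneously over all $n$ by a geometric peeling (slicing) argument in the spirit of \cite{jamieson2014lil,howard2018uniform}.

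\textbf{From the deviation to an increment process.} Write $t_n:=t_{n,\delta}^{\rm LIL}$ and set, for $\vartheta\in\Theta$,
\[
S_n(\vartheta):=\sum_{i=1}^{n}\Bigl(\bigl[\phi(Y_i,\vartheta)-\phi(Y_i,\theta^*)\bigr]-\bigl[\Phi(\vartheta)-\Phi(\theta^*)\bigr]\Bigr),
\]
so that $\hat\Phi_n(\vartheta)-\hat\Phi_n(\theta^*)=\bigl[\Phi(\vartheta)-\Phi(\theta^*)\bigr]+\tfrac1n S_n(\vartheta)$. By \Cref{as:convex_phi}, $\hat\Phi_n$ is convex with minimizer $\hat\theta_n$, hence $\hat\Phi_n$ is nonincreasing to the left of $\hat\theta_n$ and nondecreasing to its right; therefore $\{|\hat\theta_n-\theta^*|\ge t_n\}\subseteq\{\hat\Phi_n(\theta^*+t_n)\le\hat\Phi_n(\theta^*)\}\cup\{\hat\Phi_n(\theta^*-t_n)\le\hat\Phi_n(\theta^*)\}$. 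Since $n\ge n_0$ forces $t_n\le r$ (once one checks that $n\mapsto t_{n,\delta}^{\rm LIL}$ is nonincreasing, which is where the $\ln(2n)$ and the coefficient $0.72\ln(10.4/\delta)>1$ are used), \Cref{as:Phi_strongly_convex} gives $\Phi(\theta^*\pm t_n)-\Phi(\theta^*)\ge\tfrac\alpha2 t_n^2$, so the event above is contained in $\{-S_n(\theta^*+t_n)\ge\tfrac\alpha2 t_n^2 n\}\cup\{-S_n(\theta^*-t_n)\ge\tfrac\alpha2 t_n^2 n\}$. For each \emph{fixed} $\vartheta$, $\{S_n(\vartheta)\}_{n\ge0}$ is a sum of i.i.d.\ centered increments that are $\sigma^2(\vartheta-\theta^*)^2$‑sub‑Gaussian by \Cref{as:SG}, so $n\mapsto\exp\{\lambda S_n(\vartheta)-\tfrac12 n\lambda^2\sigma^2(\vartheta-\theta^*)^2\}$ is a nonnegative supermartingale for every $\lambda$.

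\textbf{Peeling in $n$.} Partition $\{n\ge n_0\}$ into geometric blocks $I_k=\{n:\ n_k\le n<n_{k+1}\}$ with $n_{k+1}=\lceil\eta\,n_k\rceil$ for a ratio $\eta>1$ to be tuned; the finitely many initial indices for which $\lceil\eta n\rceil=n$ are handled by a plain union bound. On $I_k$ set $\tau_k:=\inf_{n\in I_k}t_{n,\delta}^{\rm LIL}=t_{n_{k+1}-1,\delta}^{\rm LIL}$; since $|\hat\theta_n-\theta^*|\ge t_n\ge\tau_k$ on that block and $\tau_k\le r$, the same convexity/curvature argument shows that the deviation event on $I_k$ implies $-S_n(\theta^*+\tau_k)\ge\tfrac\alpha2\tau_k^2 n$ for some $n\in I_k$, or the analogous statement with $\theta^*-\tau_k$. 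Now freeze $\vartheta=\theta^*+\tau_k$: because the boundary $n\mapsto\tfrac\alpha2\tau_k^2 n$ is a \emph{slanted line}, choosing $\lambda=\alpha/(2\sigma^2)$ in the supermartingale above makes $\lambda(-S_n)-\tfrac12 n\lambda^2\sigma^2\tau_k^2\ge\tfrac{\alpha^2\tau_k^2}{8\sigma^2}\,n$ on that event, and Doob's maximal inequality for nonnegative supermartingales yields
\[
\PP\Bigl(\exists\, n\ge n_k:\ -S_n(\theta^*+\tau_k)\ \ge\ \tfrac\alpha2\tau_k^2 n\Bigr)\ \le\ \exp\!\Bigl(-\frac{\alpha^2\tau_k^2 n_k}{8\sigma^2}\Bigr).
\]
Substituting $\tau_k^2=\tfrac{(3.4)^2\sigma^2}{\alpha^2}\cdot\tfrac{\ln\ln(2(n_{k+1}-1))+0.72\ln(10.4/\delta)}{n_{k+1}-1}$ and $n_k/(n_{k+1}-1)\ge 1/\eta$ turns the right‑hand side into $\bigl(\ln 2n_{k+1}\bigr)^{-c}(\delta/10.4)^{0.72c}$ with $c=(3.4)^2/(8\eta)$, up to harmless $n_{k+1}$ vs.\ $n_{k+1}-1$ adjustments.

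\textbf{Summation and constants.} Summing over $k$, with a factor $2$ for the two branches and the direct union bound over the small‑$n$ regime, the overall failure probability is at most a quantity of the form $2(\delta/10.4)^{0.72c}\sum_{k}(\ln 2n_{k+1})^{-c}$. It then remains to pick $\eta$ so that $c>1$ (convergence of the series, whose dependence on $\eta$ is of Hurwitz‑zeta type) and $0.72c\ge1$ (so the $\delta$‑dependence is at worst linear), and to verify that with $3.4$, $0.72$, $10.4$ this bound is $\le\delta$ for every $\delta\in(0,1)$ and every admissible configuration. The main obstacle is precisely this last optimization: shrinking $\eta$ tightens the per‑block exponent $c=(3.4)^2/(8\eta)$ but inflates the number of blocks through $\sum_k(\ln 2n_{k+1})^{-c}$, and the constants $3.4$, $0.72$, $10.4$ are what makes the two effects balance so that the union over \emph{all} $n\ge n_0$ — including the adversarial case $n_0=1$ — stays below $\delta$ uniformly in $\delta$; the ancillary monotonicity check on $n\mapsto t_{n,\delta}^{\rm LIL}$ and the separate treatment of the first few indices are part of the same bookkeeping.
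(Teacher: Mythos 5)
Your first step---converting the deviation event into a line-crossing event for a centered sub-Gaussian random walk via convexity of $\hat\Phi_n$, the local curvature of $\Phi$, and \Cref{as:SG}---is exactly the paper's \Cref{lem:m_estimator_to_SG}, and it is correct (your block-wise freezing of $\vartheta=\theta^*+\tau_k$ is, if anything, more careful than the paper, which applies the uniform bound to summands $Z_i(t(n))$ that change with $n$). The divergence is in the second step: the paper does not re-prove the uniform sub-Gaussian bound at all; it invokes \citet[Theorem~1]{howard2018uniform} (restated as \Cref{thm:Howard}) with confidence $\delta/2$ per tail, and the constants in \eqref{main:1} are exactly those of that result after the substitution $\delta\mapsto\delta/2$ (namely $3.4=2\times 1.7$, $0.72$, and $10.4=2\times 5.2$).

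The genuine gap is that your self-contained peeling argument, as set up, cannot deliver those constants, and you have correctly identified the failure point but not resolved it. With the through-the-origin boundary $n\mapsto\tfrac{\alpha}{2}\tau_k^2 n$ and the single tuning $\lambda=\alpha/(2\sigma^2)$, the per-block exponent is $c=(3.4)^2/(8\eta)=1.445/\eta$. Requiring the $\delta$-dependence to be at worst linear forces $0.72c\ge 1$, i.e.\ $\eta\le 1.04$; but then, starting from the adversarial case $n_0=1$, the blocks grow so slowly that $\sum_k\bigl(\ln 2n_{k+1}\bigr)^{-c}$ with $c\approx 1.39$ is on the order of a few hundred (the first twenty-odd blocks each advance $n$ by one), and the total failure probability comes out as a large constant multiple of $\delta$ rather than $\delta$. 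There is no choice of $\eta$ that satisfies both constraints simultaneously: the constants $3.4$, $0.72$, $10.4$ are calibrated to the sharper stitching of \citet{howard2018uniform}, which uses per-epoch tangent (affine, not through-origin) boundaries, a geometric ratio near $2$, and a $\zeta(s)$-normalized weighting of the $\ln\ln$ term with $s=1.4$, none of which your sketch reproduces. The shortest repair is the paper's: after your reduction, cite \Cref{thm:Howard} with $\delta/2$ and observe that $\tfrac{\alpha}{2}\,n\,t(n)=1.7\sigma\sqrt{n(\ln\ln 2n+0.72\ln(10.4/\delta))}$ matches its boundary exactly.
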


\begin{remark} \label{remark:relax}
	In the definition of $\Phi$ and $\hat\Phi_n$, one can replace $\phi(Y,\theta)$ with $\phi(Y,\theta)-\phi(Y,\theta_0)$ for any arbitrary $\theta_0\in\Theta$, without changing the values of $\theta^*$ and $\hat\theta_n$. Then, Assumption \ref{as:finite_expectation} becomes less restrictive for $Y$ in general, since it only requires $\phi(Y,\theta)-\phi(Y,\theta_0)$ to have a finite expectation. For instance, for median estimation, $\phi(Y,\theta)=|Y-\theta|$, yet the median should be defined even if $Y$ does not have an expectation. Taking $\theta_0=0$ yields $\phi(Y,\theta)-\phi(Y,\theta_0)=|Y-\theta|-|Y|$, which is bounded, hence, always has an expectation.
\end{remark}

We now give some natural examples for which all the assumptions presented above are satisfied.

\paragraph{Mean estimation}

Let $\mathcal Y=\Theta=\R$ and $\phi(x,\theta)=(x-\theta)^2 $. Assume that $Y$ is 
$s^2$-sub-Gaussian. Then, it is easy to see that \Cref{as:finite_expectation,as:convex_phi,as:Phi_strongly_convex,as:SG} are all satisfied with $r=+\infty$, $\alpha=2$ and $\sigma=2s$. The standard deviation is doubled because of \Cref{as:SG}, it is the cost for the generality of our result. However it is not a problem since we want to focus on other M-estimators, the mean of sub-Gaussian variables being already well studied (see, e.g., \cite{howard2018uniform}).

\paragraph{Median and quantile estimation}

Let $\mathcal Y=\Theta=\R$ and $\phi(x,\theta)=|x-\theta|-|x|$. Assume that $Y$ has a unique median $\theta^*$ and that its cumulative distribution function $F$ satisfies $|F(\theta)-\nicefrac{1}{2}|\geq (\nicefrac{\alpha}{2})|\theta-\theta^*|$, for all $\theta\in [\theta^*-r,\theta^*+r]$, where $r>0$ is a fixed number. Then, $\theta^*$ is the unique minimizer of $\Phi$ and for all $\theta\in [\theta^*-r,\theta^*+r]$,
\begin{align}
	\Phi(\theta)-\Phi(\theta^*) & = 2\int_{(\theta^*,\theta]}x\diff F(x)-(\theta-\theta^*)+2(\theta F(\theta)-\theta^* F(\theta^*)) \\
	& = 2\int_{(\theta^*,\theta]}F(x)\diff x-(\theta-\theta^*) \geq \frac{\alpha}{2}(\theta-\theta^*)^2,
\end{align}
yielding Assumption \ref{as:Phi_strongly_convex}. Moreover, since $\phi(Y,\theta)$ is bounded almost surely and $1$-Lipschitz, for all $\theta\in\R$, Assumptions \ref{as:finite_expectation} and \ref{as:SG} are automatically true (with $\sigma =1$).

The same arguments hold true if $\phi(x,\theta)=\tau_\alpha(x-\theta)-\tau_\alpha(x)$, where $\tau_{\alpha}(x)=\alpha x$ if $x\geq 0$, $\tau_{\alpha}(x)=(\alpha-1)x$ otherwise, for which $\theta^*$ is the $\alpha$-quantile of $Y$, for $\alpha\in (0,1)$.

\paragraph{Huber's $M$-estimators}

Let $\mathcal Y=\Theta=\R$ and let $c>0$. Denote by $g_c(x)=x^2$ if $|x|\leq c$, $g_c(x)=c(2|x|-c)$ if $|x|>c$ and let $\phi(x,\theta)=g_c(x-\theta)-g_c(x)$. This function $g_c$ being $2c$-Lipschitz, \Cref{as:SG} is satisfied with $\sigma=2c$. Assume that $Y$ has a positive density $f$ on $\R$. Then, it is easy to check that $\Phi$ is twice differentiable, with $\Phi''(\theta)=2\prt{F(\theta+c)-F(\theta-c)}>0$, for all $\theta\in\R$, where $F$ is the cumulative distribution function of $Y$. Hence, $\theta^*$ is well-defined and unique, and if there exists $m>0$ such that $f(x)\geq m$ for $x\in [\theta^*-2c,\theta^*+2c]$, then Assumption \ref{as:Phi_strongly_convex} is satisfied with $r=2c$ and $\alpha=4cm$. 

\paragraph{Comparison between union bound and LIL}

Let $Y_1, \dots, Y_n$ be i.i.d. random variables and let $\phi : \mathbb{R}\times \mathbb{R} \xrightarrow{} \mathbb{R}$ be a loss such that assumptions of \Cref{MainThm1d} are satisfied. Let $\hat{\theta}_n$ be the $M$-estimator associated with the samples $Y_1, \dots, Y_n$ and the loss $\phi$. \Cref{lem:m_estimator_to_SG} in \Cref{sec:6} gives the following tail bound :
$\forall n \geq 1$, $\mathbb{P}\big( \lvert \hat{\theta}_n-\theta^* \rvert > \frac{2\sigma}{\alpha}  \sqrt{\nicefrac{2\ln(2/\delta)}{n}} \big) \leq \delta$.
A naive union bound then gives
\begin{equation} \label{eq:UB_upper_bound}
\mathbb{P}\bigg(\lvert \hat{\theta}_n - \theta^* \rvert \leq 
t_{n,\delta}^{\rm UB} \coloneqq \frac{2\sigma}{\alpha} 
\sqrt{\frac{2\ln (2n^{1+\varepsilon}/\delta)}{n}}\ \text{ for all }
n\ge 1\bigg) \geq 1 - \sum_{n=1}^\infty \frac{\delta}{n^{1+\varepsilon}} 
\geq  1 -  \zeta(1+\varepsilon)\delta.
\end{equation}
\begin{figure}
    \centering
    \includegraphics[width=0.6\linewidth]{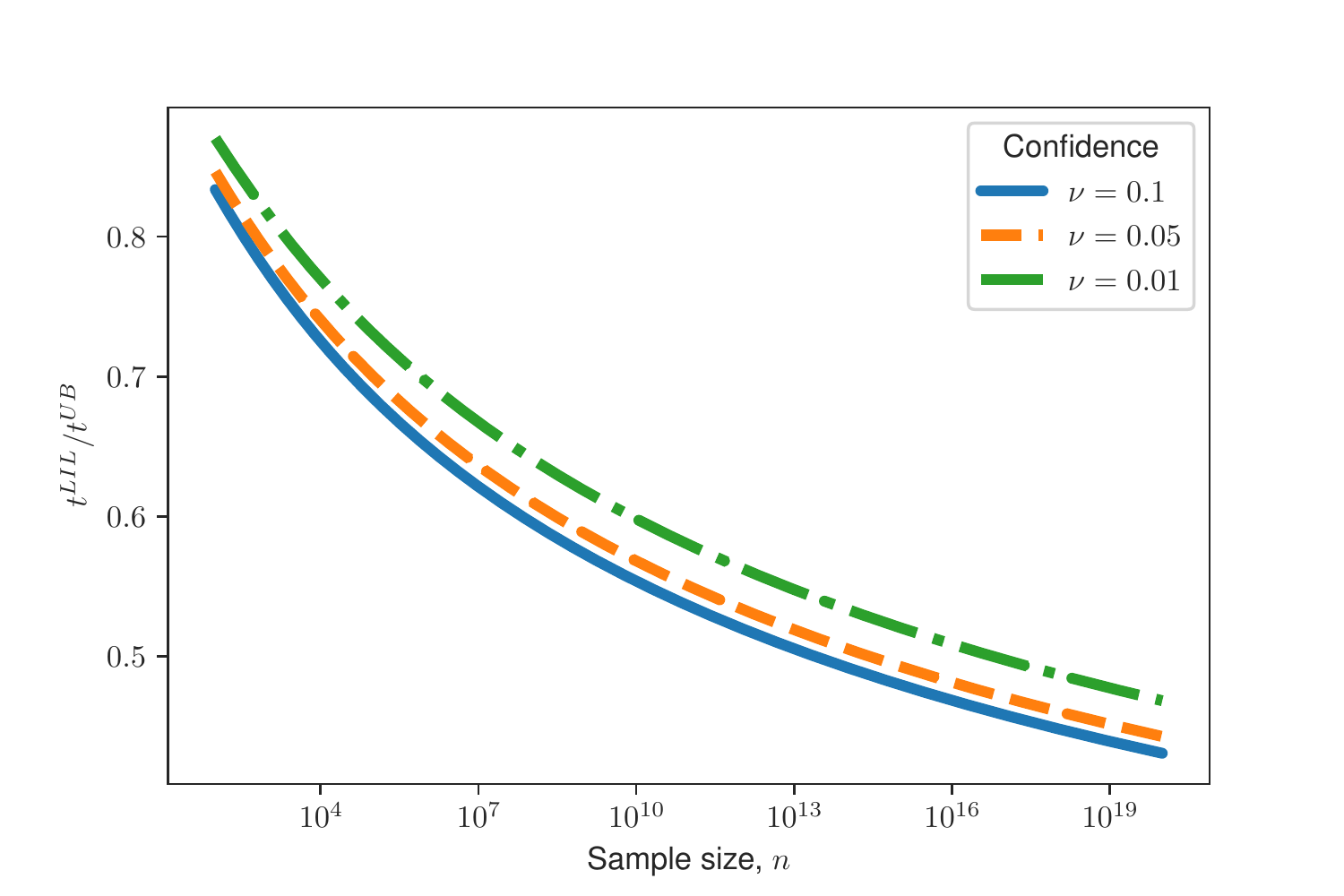}
    \caption{Ratio $t^{\rm LIL}_{n, \delta}/t^{UB}_{n, \delta'}$ for different sample sizes $n$ and confidence levels $\nu$.}
    \label{fig:vs_UB}
\end{figure}
\Cref{fig:vs_UB} shows the ratio of the LIL upper 
bound $t_{n,\delta}^{\rm LIL}$ provided by \Cref{MainThm1d} 
over the sub-Gaussian upper bound $t_{n,\delta'}^{\rm UB}$ 
for different levels of global confidence. The parameters 
$\delta$ and $\delta'$ are chosen to guarantee that the 
right hand sides in both \eqref{main:1}  and 
\eqref{eq:UB_upper_bound} are equal to the prescribed 
confidence level. For $t_{n,\delta'}^{\rm UB}$, we chose 
$\varepsilon = 0.1$, the results for other values of 
$\varepsilon$ being very similar. We observe that the LIL 
bound is always better than the one obtained by the union 
bound. In addition,  the gap between the bounds widens as 
the sample size grows.

\section{Uniform LIL for \texorpdfstring{$M$}{M}-estimators of a multidimensional parameter}

\def\bX{\bs{X}}
\def\btheta{\bs{\theta}}

We consider here a standard setting in supervised learning, in which the goal is to predict
a real valued label using a $d$-dimensional feature.
More precisely, we are given $n$ independent label-feature pairs $(\bX_1, Y_1), \dots, (\bX_n, Y_n)$,
with labels $Y_i \in \mathbb{R}$ and features $\bX_i \in \mathbb{R}^d$, drawn from a common 
probability distribution $P$. Let $\phi : \mathbb{R} \times \mathbb{R} \to \mathbb{R}$ be a given 
loss function and $\rho_n:\mathbb{R}^d\to\mathbb{R}$ a given penalty. For a sample $(\bX_1, Y_1), \dots, 
(\bX_n, Y_n)$, we define the penalized empirical and population risks
\begin{align}
    \hat\Phi_n(\btheta) &=
    \frac1n \sum_{i=1}^n \phi(Y_i,\btheta^\top \bX_i)
    + \rho_n(\btheta)%\\
    \quad\text{and}\quad
    \Phi_n(\btheta) =
    \mathbb{E} \big[ \phi(Y_1,\btheta^\top \bX_1) \big]
    + \rho_n(\btheta).
\end{align}
Note that the penalty $\rho_n$ is allowed to depend on the sample size $n$. Since our 
results are non-asymptotic, this dependence will be reflected in the constants appearing 
in the law of iterated logarithm stated below. We also
define the penalized $M$-estimator $\bm{\hat{\theta}}_n$ and its
population counterpart $\btheta^*$ by
\begin{align}\label{ERM}
    \bm{\hat{\theta}}_n \in \arg \min_{\btheta\in\mathbb{R}^d} \hat{\Phi}_n(\btheta)\quad\text{and}\quad
    \btheta^* \in \arg \min_{\btheta\in\mathbb{R}^d} \Phi_n(\btheta).
\end{align}
Typical examples where such a formalism is applicable are the maximum
a posteriori approach and penalized empirical risk minimization. 
Our goal is to establish a tight non-asymptotic bound on the error
of $\bm{\hat{\theta}}_n$, that is, with high probability, valid for every $n\in\mathbb{N}$.
To this end, we consider a unit vector $\bs{a} \in \mathbb{R}^d$ and we are 
interested in bounding the deviations of the random variable 
${\bs a}^\top(\bm{\hat{\theta}}_n - \btheta^*)$. One can think of $\bs{a}$ as 
the feature vector of a new example, the label of which is unobserved. 
We aim at providing uniform non-asymptotic guarantees on the quality 
of the predicted label $\hat y = \bs{a}^\top\hat\btheta_n$. 

The main result of this section is valid under the assumptions listed below. 
We will present some common examples in which all these assumptions are satisfied. 

\begin{assumption} \emph{(Finite expectation)}\label{as:exp}
The random variables $\phi(Y_1,\btheta^\top \bX_1)$ has a finite expectation,
for every $\btheta$, with respect to the probability distribution $P$.
\end{assumption}

\begin{assumption} \label{as:convex_lipschitz_phi} \emph{(Convex and Lipschitz loss)}
The function $u \mapsto \phi(y, u)$ is $L$-Lipschitz and convex for any $y\in \mathbb{R}$.
\end{assumption}

\begin{assumption}
\label{as:convex_penalty}\emph{(Convex penalty)}
The penalty $\theta \mapsto \rho_n(\theta)$ is a convex function.
\end{assumption}

\begin{remark}
\Cref{as:convex_lipschitz_phi,as:convex_penalty} can be replaced by the assumption that the function $\hat{\Phi}_n$ is convex almost surely. 
\end{remark}

\begin{assumption} \label{as:Phi_strongly_convex_everywhere} \emph{(Curvature of the population risk)}
There exists a positive non-increasing sequence $(\alpha_n)$ such that, for any $n \in \mathbb{N}^*$, for any ${\bs w} \in \mathbb{R}^d$,
$\Phi_n(\btheta^* + \bs{w}) - \Phi_n(\btheta^*) \geq (\nicefrac{\alpha_n}{2}) \lVert {\bs w} \rVert ^2_2$.
\end{assumption}

\begin{assumption}
\label{as:boundedness}\emph{(Boundedness of features)}
There exists a  positive constant $B$  such that
$\lVert \bX_1 \rVert_2 \leq B$ almost surely.
\end{assumption}
We will use the notation $\kappa_n = L/\alpha_n$ and refer to this quantity as the 
condition number. Note that all the foregoing assumptions are common in statistical 
learning, see for instance \citep{Karthik,Rakhlin2012}. They are helpful not only 
for proving statistical guarantees but also for designing efficient computational 
methods for approximating $\hat\btheta_n$. 

For instance, if $\rho_n(\btheta) = \lambda_n\|\btheta\|_2^2$ is the ridge penalty 
\citep{Ridge} and $\phi$ is either the absolute deviation ($\phi_{abs}(y,y') = |y-y'|$, see
for instance \citep{WangWY14}), the hinge ($\phi_{abs}(y,y') = (1-yy')_{+}$ with $y\in[-1,1]$)  
or the logistic ($\phi_{log}(y,y') = \ln(1+e^{-yy'})$ with $y\in [-1,1]$) loss, 
the aforementioned assumptions are satisfied with $L = 1$ and $\alpha_{n} = \lambda_n$. 
One can also consider the usual squared loss $\phi(y,y') = (y-y')^2$ under 
the additional assumption that $Y$ is bounded by a known constant $B_y$. 
In this condition, if the minimization problems in \eqref{ERM} are constrained to 
the ball of radius $R$, Assumptions \ref{as:convex_lipschitz_phi} and 
\ref{as:Phi_strongly_convex_everywhere} are satisfied with $\alpha_n = 1$ and $L = 2B_y + BR$. It should be noted that \Cref{as:Phi_strongly_convex_everywhere} is satisfied, for instance, when
$\Phi_n$ is strongly convex. Importantly, as opposed to some other papers \citep{Hsu}, 
we need this assumption for the population risk only. 

\begin{theorem}\label{main:thm2}
Let \Cref{as:exp,as:convex_lipschitz_phi,as:convex_penalty,as:Phi_strongly_convex_everywhere,as:boundedness} be satisfied for every $n\in\mathbb{N}$. Assume, in addition, that 
the sequence $\nicefrac{\ln\ln n}{n\alpha_n^2}$ is decreasing. Then, for any vector 
$\bs{a}\in\mathbb{R}^d$ and any $\delta \in (0, 1)$, 
\begin{align}
     \mathbb{P}\left(\forall n \geq 1, \quad {\bs a}^\top (\bm{\hat{\theta}}_n - \btheta^*) \leq \frac{10 B\kappa_n}{\sqrt{3}}\|\bs{a}\|_2
    \sqrt{\frac{1.2 \ln\ln n + \ln(3/\delta) + 3}{n}}\right) \geq 1 - \delta.
\end{align}
\end{theorem}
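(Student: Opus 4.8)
The plan is: (i) reduce the directional bound to a bound on $\|\bm{\hat{\theta}}_n-\btheta^*\|_2$, using $\bs{a}^\top(\bm{\hat{\theta}}_n-\btheta^*)\le\|\bs{a}\|_2\,\|\bm{\hat{\theta}}_n-\btheta^*\|_2$; (ii) convert $\{\|\bm{\hat{\theta}}_n-\btheta^*\|_2>\rho\}$ into a one-sided deviation of a centered empirical process via convexity and curvature; (iii) control that process \emph{uniformly in $n$} by a peeling argument; (iv) calibrate constants. Write $\xi_i(\btheta)=\phi(Y_i,\btheta^\top\bX_i)-\phi(Y_i,\btheta^{*\top}\bX_i)-\E\big[\phi(Y_1,\btheta^\top\bX_1)-\phi(Y_1,\btheta^{*\top}\bX_1)\big]$, so that $\tfrac1n\sum_{i\le n}\xi_i(\btheta)=[\hat{\Phi}_n(\btheta)-\hat{\Phi}_n(\btheta^*)]-[\Phi_n(\btheta)-\Phi_n(\btheta^*)]$ (the penalty cancels). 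For \emph{Step 1} I would use a deterministic localization: since $\hat{\Phi}_n$ is convex (\Cref{as:convex_lipschitz_phi,as:convex_penalty}) and minimized at $\bm{\hat{\theta}}_n$, if $\|\bm{\hat{\theta}}_n-\btheta^*\|_2>\rho$ then the segment from $\btheta^*$ to $\bm{\hat{\theta}}_n$ meets the sphere of radius $\rho$ at a point $\tilde\btheta$ with $\hat{\Phi}_n(\tilde\btheta)\le\hat{\Phi}_n(\btheta^*)$; combining with \Cref{as:Phi_strongly_convex_everywhere} ($\Phi_n(\tilde\btheta)-\Phi_n(\btheta^*)\ge\tfrac{\alpha_n}{2}\rho^2$) gives
\[\big\{\|\bm{\hat{\theta}}_n-\btheta^*\|_2>\rho\big\}\ \subseteq\ \Big\{\,\sup_{\|\bs{w}\|_2\le\rho}\ \sum_{i=1}^{n}\big(-\xi_i(\btheta^*+\bs{w})\big)\ \ge\ \tfrac{\alpha_n n\rho^2}{2}\,\Big\}.\]
The supremum on the right is one-sided and is $\ge 0$ (since $\xi_i(\btheta^*)=0$), which will matter for constants.

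For \emph{Step 2} I would bound, at a fixed scale $\rho$ and horizon $m$, the quantity $G_m(\rho):=\max_{n\le m}\sup_{\|\bs{w}\|_2\le\rho}\sum_{i\le n}(-\xi_i(\btheta^*+\bs{w}))$. For each fixed $\bs{w}$, $n\mapsto\sum_{i\le n}(-\xi_i(\btheta^*+\bs{w}))$ is a martingale, so $n\mapsto\sup_{\|\bs{w}\|_2\le\rho}\sum_{i\le n}(-\xi_i(\btheta^*+\bs{w}))$ is a nonnegative submartingale and Doob's maximal inequality bounds $\E[G_m(\rho)]$ by $\E\big[\sup_{\|\bs{w}\|_2\le\rho}\sum_{i\le m}(-\xi_i(\btheta^*+\bs{w}))\big]$. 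That expectation is handled by symmetrization plus the Ledoux--Talagrand contraction inequality (using that $u\mapsto\phi(y,u)$ is $L$-Lipschitz, \Cref{as:convex_lipschitz_phi}), which reduces it to $L\rho\,\E\big\|\sum_{i\le m}\varepsilon_i\bX_i\big\|_2\le L\rho\sqrt{\sum_i\E\|\bX_i\|_2^2}\le LB\rho\sqrt m$ by \Cref{as:boundedness} --- crucially this step is \emph{dimension-free}. A sharp concentration bound (bounded differences or Talagrand, with per-sample increments $\le 2LB\rho$ over the ball) then upgrades this to: with probability at least $1-\beta$, $G_m(\rho)\le LB\rho\sqrt m\,\big(c_0+\sqrt{2\ln(1/\beta)}\big)$ for an absolute constant $c_0$.

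For \emph{Step 3} (peeling), fix a ratio $\eta>1$, set $n_k=\lceil\eta^k\rceil$, pick $\beta_k$ with $\sum_k\beta_k\le\delta$ (e.g.\ $\beta_k\propto\delta\,k^{-(1+\gamma)}$), and on the block $n\in[n_k,n_{k+1})$ apply Step 2 with $m=n_{k+1}-1$ and scale $\rho^{(k)}$ chosen so that $LB\rho^{(k)}\sqrt{n_{k+1}}\,(c_0+\sqrt{2\ln(1/\beta_k)})\le\tfrac12\big(\min_{n_k\le n<n_{k+1}}\alpha_n n\big)(\rho^{(k)})^2$. The extra hypothesis that $\ln\ln n/(n\alpha_n^2)$ is decreasing enters exactly here: it gives $n\alpha_n^2\ge n_k\alpha_{n_k}^2$, hence $n\alpha_n\ge n_k\alpha_{n_k}$ for $n\ge n_k$, so $\min_{n_k\le n<n_{k+1}}\alpha_n n=n_k\alpha_{n_k}$ and one may take $\rho^{(k)}=2B\kappa_{n_k}\sqrt{\eta/n_k}\,(c_0+\sqrt{2\ln(1/\beta_k)})$. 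By the inclusion of Step 1 and the bound of Step 2, $\PP\big(\exists n\in[n_k,n_{k+1}):\|\bm{\hat{\theta}}_n-\btheta^*\|_2>\rho^{(k)}\big)\le\beta_k$, and a union bound over $k$ gives, with probability $\ge1-\delta$, $\|\bm{\hat{\theta}}_n-\btheta^*\|_2\le\rho^{(k(n))}$ for every $n$, where $k(n)$ is the block index of $n$.

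Finally (\emph{Step 4}) one checks that $\rho^{(k(n))}$ is dominated by the stated right-hand side: since $\alpha_n$ is non-increasing and $n<\eta n_k$, $\kappa_{n_k}^2/n_k\le\eta\,\kappa_n^2/n$; since $n_k\le n$, $\ln\ln n_k\le\ln\ln n$; and $n_k\ge\eta^k$ gives $\ln k\le\ln\ln n+\ln(1/\ln\eta)$, so $2\ln(1/\beta_k)\le2\ln(1/\delta)+2(1+\gamma)\ln\ln n+O(1)$; substituting and expanding $(c_0+\sqrt{2\ln(1/\beta_k)})^2$ yields $\|\bm{\hat{\theta}}_n-\btheta^*\|_2^2\le\frac{B^2\kappa_n^2}{n}\big[c_1\ln\ln n+c_2\ln(1/\delta)+c_3\big]$ with $c_1,c_2,c_3$ depending only on $\eta,\gamma,c_0$. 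The hard part is purely the numerical calibration: one must take $\eta$ close enough to $1$, $\gamma$ small, and --- above all --- a tight enough maximal/concentration inequality in Step 2 (so that $c_0$ is small) in order to make $c_1\le\tfrac{100}{3}\cdot1.2=40$, $c_2\le\tfrac{100}{3}$, and $c_3$ (together with the cross terms, the $\ln 3$ from $\ln(3/\delta)$, and the $\zeta(1+\gamma)$- and $\ln\eta$-losses) fit under $\tfrac{100}{3}\cdot3=100$; this is the bookkeeping that produces the explicit constants $10/\sqrt3$, $1.2$, $\ln(3/\delta)$ and $+3$, the additive $3$ and the $\ln(3/\delta)$ also absorbing the first few sample sizes where $\ln\ln n$ is not yet informative (handled by the single-$n$ tail of Step 2). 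Combining with $\bs{a}^\top(\bm{\hat{\theta}}_n-\btheta^*)\le\|\bs{a}\|_2\,\|\bm{\hat{\theta}}_n-\btheta^*\|_2$ concludes. I expect obstacle (iv), and within it the need for a dimension-free \emph{and} sufficiently tight control of the supremum of the empirical process in Step 2, to be the only real difficulty; everything else is standard.
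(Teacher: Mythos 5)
Your architecture is essentially the paper's: localization via convexity of $\hat\Phi_n$ plus the curvature of $\Phi_n$, geometric peeling over blocks $[n_k,n_{k+1})$, symmetrization and the contraction principle to reduce to the dimension-free quantity $\E\lVert\sum_i\varepsilon_i\bX_i\rVert_2\le B\sqrt m$, and a union bound over blocks with $\ln(1/\beta_k)\approx\ln(1/\delta)+(1+\gamma)\ln k$. The two places where you genuinely diverge are both acceptable trades. First, you bound $\lVert\bm{\hat\theta}_n-\btheta^*\rVert_2$ over the Euclidean ball and finish with Cauchy--Schwarz, whereas the paper bounds the directional error directly by taking the supremum over the affine slice $t\mathcal V=\{\bs w:\bs a^\top\bs w=t\}$ and computing the maximizer of the concave quadratic $\bs w^\top\mathbf X\bs\varepsilon-\lVert\bs w\rVert_2^2/2$ in closed form; since all the assumptions are isotropic, nothing is lost either way. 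Second, you control the supremum by ``expectation of the sup plus bounded-differences concentration,'' whereas the paper exponentiates first and bounds an explicit moment generating function; again both can be calibrated to constants of the stated order.

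There is, however, one concrete gap in Step 2 as written: you invoke ``Doob's maximal inequality'' to bound $\E[G_m(\rho)]=\E[\max_{n\le m}M_n]$ by $\E[M_m]$, where $M_n=\sup_{\lVert\bs w\rVert\le\rho}\sum_{i\le n}(-\xi_i(\btheta^*+\bs w))$ is a nonnegative submartingale. No such $L^1$ maximal inequality exists for submartingales: Doob gives only the weak-type bound $\PP(\max_{n\le m}M_n\ge t)\le\E[M_m]/t$ (which yields a useless $1/t$ tail here) or the $L^p$ bound for $p>1$; in general $\E[\max_{n\le m}M_n]$ can exceed $\E[M_m]$ by an unbounded factor. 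This is repairable in at least two standard ways: (i) symmetrize \emph{before} taking the maximum over $n$ and apply L\'evy's (or Montgomery--Smith's) maximal inequality for partial sums of independent symmetric terms, which bounds the tail of $\max_{n\le m}$ by twice the tail at $n=m$; or (ii) do what the paper does --- apply Doob's weak-type inequality to the legitimately nonnegative submartingale $n\mapsto\sup_{\bs w}\exp\{\lambda(S_n(\bs w)-\cdots)\}$ at level $1$, so that the maximal inequality is used where it is actually valid. Once this is fixed, the remaining issues are exactly the numerical calibration you already identify (your leading constants $8\eta(1+\gamma)\approx 9$--$10$ sit comfortably under the paper's $100/3$ and $40$), plus the minor bookkeeping for $n\le 3$ where $\ln\ln n$ is not positive, which the additive constant absorbs.
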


Conditions under which \Cref{main:thm2} holds can be further relaxed. We have namely 
in mind the following three extensions. First, \Cref{as:boundedness} can be 
replaced by sub-Gaussianity of $\bs{X}$. Second, the curvature condition can 
be imposed on a neighborhood of $\btheta^*$ only, by letting $\Phi_n$ grow 
linearly outside the neighborhood. Third, the Lipschitz assumption on $\phi$ 
can be replaced by the following one: for a constant $\beta$ and a sub-Gaussian 
random variable $\eta$, the function 
$u\mapsto \phi(Y,u)-\beta u^2$ is $\eta$ Lipschitz. This last extension will 
allow us to cover the case of squared loss without restriction to a bounded 
domain. All these extensions are fairly easy to implement, but they significantly 
increase the complexity of the statement of the theorem. 
In this work, we opted for sacrificing the generality in order to get better readability of the result.

Another interesting avenue for future research is the extension of the presented
results to high-dimensional on-line setting, \textit{i.e.}, when the dimension might
be larger than the sample size, see \citep{negahban2012} for an in-depth discussion of 
the batch setting. \label{sec:3}

\section{Application to Bandits}

% cite yu2018pure (Pure exploration of multi-armed bandits with heavy-tailed payoffs) -> rewards have finite p-th moment with p>1. Heavy tailed scenario are plausible in real-world financial data

In this section, we apply the univariate uniform law of iterated 
logarithm that we proved in \Cref{sec:2} to a problem of multi-armed
bandits in the fixed confidence setting. The Best Arm Identification 
(BAI) problem in the fixed confidence setting usually consists in 
identifying, as fast as possible, which arm produces the highest 
expected outcome, see e.g. \citep{audibert2010best,GabillonGL12,Kaufman16}. 
A more probabilistic formulation of the problem is the following: 
we are able to collect data by sampling from $K$ unknown distributions
$P_1,\ldots,P_K$, the goal is to identify the distribution having the
largest expectation. Naturally, the same problem can be formulated for 
finding the distribution with the largest median, or the largest 
quantile of a given order. In particular, such a formulation of the
problem might be of interest in cases where the expectations of the 
outcomes of each arm may not be defined (rewards are heavy tailed) or
are not meaningful (rewards are subject to some arbitrary contamination). 
Such a problem has been recently considered by \cite{Brunel}. From a
statistical perspective, the problem under consideration is to find
the maximum point in a quantile regression problem \citep{Chernozhukov05}. 
The theoretical results of previous sections allow us to adapt the 
LIL'UCB algorithm of \cite{jamieson2014lil} to this general framework. 

\paragraph{Robust BAI}
We consider a robust version of BAI, which we call Robust BAI (RBAI). Let $(P_\theta)_{\theta\in\R}$ be a family of distributions on $\R$ with a location parameter $\theta$ (i.e., $P_\theta$ is the distribution of $Y+\theta$, where $Y\sim P_0$). Suppose there are $K$ arms, each arm $k\in [K]$ producing i.i.d.\ rewards $Y_{1,k},Y_{2,k},Y_{3,k}, \ldots \in\R$ with distribution $P_{\theta_k}$, for some $\theta_k\in\R$. At each round $n=1,2,\ldots$, the player chooses an arm $I_n\in[K]$ and receives the corresponding reward
$Y_{T_{I_n}(n-1),I_n}$, where $T_k(n-1) = \mathds 1(I_1=k)+\ldots+\mathds 1(I_{n-1}=k)$ is the number of times the arm $k$ was pulled during the rounds $1,\ldots,n-1$.
We let $\phi:\R\times \R\to\R$ be of the form $\phi(y,\theta)=\tilde\phi(y-\theta)$ and we assume that $0$ is the minimizer of $\E[\phi(Y-\theta)], \theta\in\R$, where $Y\sim P_0$. Therefore, for each arm $k\in [K]$, $\theta_k$ coincides with the population counterpart of the $M$-estimator defined in \Cref{sec:2}. In the rest of this section, we let Assumptions \ref{as:finite_expectation}, \ref{as:Phi_strongly_convex} and \ref{as:SG} hold for $P_0$, which implies that they automatically hold for each $P_{\theta}, \theta\in\R$. 
For every arm $k\in [K]$ and every sample size $n\geq 1$, we let $\hat\theta_{k,n}$ be a minimizer over $\theta\in\R$ of
$\frac{1}{n} \sum_{i=1}^n \phi(Y_{i,k},\theta)$. With this
notation, after $n$ rounds, we are able to compute the quantities
$\hat\theta_{k,T_k(n)}$ for $k\in[K]$. These quantities, 
combined with the confidence bounds furnished by the LIL of \Cref{MainThm1d}, lead to Robust lil'UCB algorithm 
described in \Cref{algo}\footnote{$\lambda$,
$\gamma$ and $n_0$ should be seen as tuning parameters for 
which our theoretical results give some guidance.}. 

\begin{center}\boxed{
	\begin{minipage}{0.80\linewidth}\linespread{1.5}
			\begin{algorithm}[H] 
			\SetKwInput{Input}{input}\SetKwInput{Output}{output}\SetKwInput{Init}{initialization}
					\caption{M-estimator lil'UCB. }
					\label{algo}\linespread{1.5}
						\Input{Confidence $\nu > 0$, parameters $\lambda, 
						\gamma > 0$, $n_0 \in \mathbb{N}$}
						\Init{Sample each arm $n_0$ times and set $n \leftarrow 
						K n_0$}
						Set $\delta = ((\sqrt{11\nu+9}-3)/11)^2$\\
						\For{$k$ in $1:K$}{Set $T_k(n) \leftarrow n_0$}
						\While{
						$\max_{k\in[K]} \big(T_k(n) - \lambda \sum_{\ell \neq k}T_{\ell}(n) \big)< 1$
						}
						{
						Sample arm
						${\DS
							I_n \leftarrow \arg\max_{k \in [K]}} \Big[ \hat{\theta}_{k, T_k(n)} + 
							\gamma\sqrt{\frac{\ln\ln 2T_k(n) +
							0.72\ln(\nicefrac{10.4}{\delta})}{T_k(n)}}
							%t_{T_k(n), \delta}^{\rm LIL}
							\Big]
						$\\
						\For{$k$ in $1:K$}
						{\eIf{$I_n = k$}{$T_k(n+1) \leftarrow T_k(n) + 1$}
						{$T_k(n+1) \leftarrow T_k(n)$}}
						$n \leftarrow n+1$
						}		\Output{$\arg \max_{k \in [K]} T_k(n)$.\\ \hphantom{[7pt]}
						}
			\end{algorithm}
	\end{minipage}}
\end{center}

To state the theoretical results, let $k^*=
\operatorname{argmax}_{k\in [K]}\, \theta_k$ be the subscript corresponding to the best arm. We assume $k^*$ to be unique, and for $k\neq k^*$, define the sub-optimality gaps $\Delta_k=\theta_{k^*}-\theta_k$. 
We also introduce the quantities 
\begin{align}
	\mathbf{H}_1=\sum_{k\neq k^*}\frac{1}{\Delta_k^2} \quad\mbox{ and }\quad  
	\mathbf{H}_2=\sum_{k\neq k^*}\frac{\ln\ln(c/\Delta_k^2)}{\Delta_k^2},
\end{align}
where $c>e^2 \max_{k\in [K]}\Delta_k^2$ is a constant that appears in mathematical 
derivations.
\begin{theorem} \label{thm:UBforBandits}
For any $\nu\in (0,1)$ and  $\beta\in (0,2/(\sqrt{2}-1))$,
there exist positive constant $\lambda$, $C_1$, $C_2$ such 
that with probability at least $1-\nu$, \Cref{algo} used 
with parameters $\nu$, $\lambda$, $\gamma = 3.4(1+\beta)
\sigma/\alpha$ and $n_0$ stops after at most $Kn_0+C_1
\mathbf{H}_1+C_2 \mathbf{H}_2$ steps and returns the best 
arm.
\end{theorem}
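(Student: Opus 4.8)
The plan is to adapt the analysis of the LIL'UCB algorithm of \cite{jamieson2014lil} to our robust $M$-estimator setting, using \Cref{MainThm1d} in place of the sub-Gaussian concentration bound for sample means. First I would define, for each arm $k$ and each sample size $n\ge n_0$, the confidence radius $U_{k,n}=\gamma\sqrt{(\ln\ln 2n+0.72\ln(10.4/\delta))/n}$, which matches $t^{\rm LIL}_{n,\delta}$ up to the choice $\gamma=3.4(1+\beta)\sigma/\alpha$. The key deterministic fact to extract from \Cref{MainThm1d} (applied arm-by-arm, with a union bound over $k\in[K]$ costing a factor $K$ in the confidence, absorbed by the reparametrization $\delta=((\sqrt{11\nu+9}-3)/11)^2$) is the ``good event'' $\mathcal E$ on which, simultaneously for all arms $k$ and all $n\ge n_0$, $|\hat\theta_{k,n}-\theta_k|\le t^{\rm LIL}_{n,\delta}\le (\beta/(1+\beta))U_{k,n}$ so that $\theta_k\le \hat\theta_{k,n}+U_{k,n}$ and $\hat\theta_{k,n}-U_{k,n}\le\theta_k$ with a margin; I would check $\mathbb P(\mathcal E)\ge 1-\nu$ by bookkeeping on $\delta$ versus $\nu$.

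Next I would run the standard UCB-style argument on the event $\mathcal E$. On $\mathcal E$, a suboptimal arm $k\ne k^*$ can only be pulled at round $n$ if its index exceeds that of $k^*$; since the index of $k^*$ is at least $\theta_{k^*}$ (on $\mathcal E$), pulling $k$ forces $\hat\theta_{k,T_k}+\gamma\sqrt{(\ln\ln 2T_k+0.72\ln(10.4/\delta))/T_k}\ge \theta_{k^*}$, which combined with $\hat\theta_{k,T_k}\le\theta_k+t^{\rm LIL}_{T_k,\delta}$ yields an inequality of the form $c_1 U_{k,T_k}\ge\Delta_k$. Solving for $T_k$, this caps the number of pulls of arm $k$ at $T_k\le c_2\,\Delta_k^{-2}\big(\ln\ln(c/\Delta_k^2)+\ln(10.4/\delta)\big)$ for a suitable absolute constant $c>e^2\max_k\Delta_k^2$ (here one uses monotonicity of $x\mapsto (\ln\ln 2x+\text{const})/x$ past its peak, exactly as in the hypothesis $\ln\ln n/(n\alpha_n^2)$ decreasing used elsewhere, to invert the bound). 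Summing over $k\ne k^*$ gives $\sum_{k\ne k^*}T_k\le C_1\mathbf H_1+C_2\mathbf H_2$ with $C_1,C_2$ depending on $\sigma,\alpha,\beta,\delta(\nu)$ but not on the gaps.

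Then I would translate the pull-count bound into a stopping-time and correctness guarantee. The stopping rule fires when $\max_k(T_k(n)-\lambda\sum_{\ell\ne k}T_\ell(n))\ge 1$; choosing $\lambda$ small enough (depending only on the constants above, as in \cite{jamieson2014lil}) ensures that once $\sum_{k\ne k^*}T_k$ has saturated at its cap, the best arm $k^*$ — which on $\mathcal E$ is never ``eliminated'' and keeps being pulled whenever the algorithm has not stopped — accumulates enough pulls to trigger the criterion, while no suboptimal arm can trigger it because its pull count stays bounded and $\lambda$ is small. This yields the total-round bound $Kn_0+C_1\mathbf H_1+C_2\mathbf H_2$ and, simultaneously, that the returned arm $\arg\max_k T_k(n)$ is $k^*$, since on $\mathcal E$ the best arm is the most-pulled one at the stopping time.

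The main obstacle I anticipate is the interplay between the non-monotone confidence radius $n\mapsto t^{\rm LIL}_{n,\delta}$ (which first decreases, then the $\ln\ln$ makes it wiggle) and the inversion step that bounds $T_k$ from the inequality $c_1 U_{k,T_k}\ge\Delta_k$: one must argue that beyond a gap-dependent threshold the radius is monotone decreasing so the inequality genuinely caps $T_k$, and then choose the constant $c$ in $\mathbf H_2$ large enough ($c>e^2\max_k\Delta_k^2$) to make $\ln\ln(c/\Delta_k^2)$ a clean upper bound for the $\ln\ln(2T_k)$ term. The second delicate point is calibrating $\lambda$ and tracking how the reparametrization $\delta=((\sqrt{11\nu+9}-3)/11)^2$ exactly accounts for the union bound over the $K$ arms plus the slack needed for the stopping rule; this is bookkeeping rather than a conceptual difficulty, but it is where the specific numerical constants in the theorem come from.
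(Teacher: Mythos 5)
Your overall strategy (adapt the lil'UCB analysis, replacing the sub-Gaussian anytime bound by \Cref{MainThm1d}) is the right one and matches the paper's, but three steps in your plan would not go through as written. First, the way you control the suboptimal arms' pull counts — a union bound over $k\in[K]$ at a common level $\delta$, ``absorbed by the reparametrization $\delta=((\sqrt{11\nu+9}-3)/11)^2$'' — does not work: that reparametrization solves $\nu=11\delta+6\sqrt{\delta}$ and has nothing to do with $K$, so to keep total confidence $\nu$ you would be forced to take $\delta\asymp\nu/K$, injecting a $\ln(K/\nu)$ factor into $C_1$ and $C_2$ and defeating the purpose of the LIL approach. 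The paper instead runs each arm's good event $\mathcal{E}_k(\omega)$ at a \emph{random} level $\Omega_k=\max\{\omega:\mathcal{E}_k(\omega)\text{ holds}\}$, observes that $\mathbb{P}(\Omega_k<\omega)\le c\omega$ makes $Z_k=(2\varkappa/\Delta_k^2)\ln(1/\Omega_k)$ sub-exponential, and concentrates $\sum_k Z_k$ via \Cref{lem:sub_exponential_bound}; this is the step (\Cref{lem:bandit_1}) that yields $C_1\mathbf{H}_1+C_2\mathbf{H}_2$ with no $K$-dependence in the constants, and it is absent from your plan. (A smaller symptom of the same bookkeeping: your claimed margin $t^{\rm LIL}_{n,\delta}\le(\beta/(1+\beta))U_{k,n}$ is false for $\beta<1$, which is allowed.)

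Second, your treatment of the stopping rule reverses the condition on $\lambda$: the rule fires when $T_k(n)\ge 1+\lambda\sum_{\ell\ne k}T_\ell(n)$, so a \emph{small} $\lambda$ makes it \emph{easier} for a suboptimal arm to trigger it and return the wrong answer. The paper's \Cref{lem:bandit_2} requires $\lambda$ bounded \emph{below} by an explicit threshold, and proving that no suboptimal arm ever meets the criterion is itself a nontrivial high-probability argument (pairwise comparison of arm $k$ with every better arm $\ell$, events $\mathcal{E}_k(\delta^{k-1})$ at geometrically shrinking levels, and a Hoeffding bound on the fraction of good events), costing an additional $6\sqrt{\delta}$ of failure probability that is precisely why $\delta$ is calibrated by $\nu=11\delta+6\sqrt{\delta}$. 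Your proposal treats this as routine bookkeeping, but without it neither the correctness of the returned arm nor the claimed total of $Kn_0+C_1\mathbf{H}_1+C_2\mathbf{H}_2$ pulls (which comes from multiplying the suboptimal-pull bound by $1+\lambda$) is established.
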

The proof of this theorem, building on the proof of 
\cite[Theorem 2]{jamieson2014lil} is provided in the
supplementary material. Note that the order of magnitude 
of the number of steps, $O(\mathbf{H}_1+\mathbf{H}_2)$, 
is optimal, as demonstrated by the following result.

\begin{theorem} \label{thm:LBforBandits}
	Consider the RBAI framework with fixed confidence $\delta\in (0,1/2)$ described above and assume $K=2$. Let $\theta_1,\theta_2\in\R$ with $\theta_1\neq\theta_2$. Let $\tilde\phi$ be symmetric and the arm distributions be $\mathcal N(\theta_1,1)$ and $\mathcal N(\theta_2,1)$. Then, the gap between the two arms is given by $\Delta=|\theta_1-\theta_2|$ and any algorithm that finds the best of the two arms with probability at least $1-\delta$, for all values of $\Delta>0$, must satisfy 
\begin{align}	
	\limsup_{\Delta\to 0}\frac{\E[T]}{\Delta^{-2}\ln\ln(\Delta^{-2})} \geq 2-4\delta.
\end{align}	
\end{theorem}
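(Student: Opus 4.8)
The plan is to reduce the problem to ordinary best‑arm identification for two unit‑variance Gaussians, apply a change‑of‑measure identity, and then exploit the \emph{uniform‑in‑$\Delta$} correctness requirement through a method‑of‑mixtures argument whose sharpness is governed by the law of the iterated logarithm. Since $\tilde\phi$ and $\mathcal N(m,1)$ are symmetric about $m$, the minimizer of $u\mapsto\E[\tilde\phi(Y-u)]$ with $Y\sim\mathcal N(m,1)$ is $u=m$; hence $\theta_k$ is the mean of arm $k$ and RBAI with $K=2$ is exactly best‑arm identification for $\mathcal N(\theta_1,1)$ and $\mathcal N(\theta_2,1)$. Write $I_\Delta$ for the instance with $\theta_1>\theta_2$, $\Delta=\theta_1-\theta_2$ (arm $1$ optimal), and let $T$, $\hat k$ denote the algorithm's stopping time and output, $N_1(T)$ the number of pulls of arm $1$. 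We may assume $\E_{I_\Delta}[T]<\infty$ for all small $\Delta$ (otherwise the $\limsup$ is $+\infty$), so it suffices to show that for every $\varepsilon>0$ there are arbitrarily small $\Delta$ with $\E_{I_\Delta}[T]\ge(2-4\delta-\varepsilon)\,\Delta^{-2}\ln\ln\Delta^{-2}$, and then to let $\varepsilon\downarrow0$.

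For a shift $\theta>\Delta$, let $I_\Delta^{(\theta)}$ be $I_\Delta$ with arm $1$'s mean lowered to $\theta_1-\theta<\theta_2$, so that in $I_\Delta^{(\theta)}$ arm $2$ is optimal with gap $\theta-\Delta>0$; $\delta$‑correctness at \emph{every} positive gap gives $\PP_{I_\Delta^{(\theta)}}(\hat k=1)\le\delta$. On $\mathcal F_T$ the likelihood ratio $\mathrm dI_\Delta^{(\theta)}/\mathrm dI_\Delta$ involves only the arm‑$1$ samples and equals $\exp(-\theta W_{N_1(T)}-\tfrac12\theta^2N_1(T))$, where $W_n=\sum_{i=1}^n(Y_{1,i}-\theta_1)$ is the arm‑$1$ partial‑sum process centred under $I_\Delta$. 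The change‑of‑measure (Wald likelihood‑ratio) identity then yields, for every $\theta>\Delta$,
\[
\E_{I_\Delta}\!\big[\mathbf 1\{\hat k=1\}\,e^{-\theta W_{N_1(T)}-\frac12\theta^2N_1(T)}\big]\;=\;\PP_{I_\Delta^{(\theta)}}(\hat k=1)\;\le\;\delta .
\]
Using this with a single $\theta\approx\Delta$ (equivalently, the bandit transportation inequality of Garivier and Kaufmann) already gives $\E_{I_\Delta}[T]\gtrsim\Delta^{-2}$; the extra $\ln\ln\Delta^{-2}$ comes from averaging over a whole band of shifts.

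Next I would integrate the displayed inequality against a probability measure $\Lambda$ supported on a scale window $[\Delta,\rho_\Delta\Delta]$ with $\rho_\Delta\to\infty$ slowly. By Fubini the left‑hand side becomes $\E_{I_\Delta}[\mathbf 1\{\hat k=1\}\,M_{N_1(T)}]$ with $M_n=\int e^{-\theta W_n-\frac12\theta^2n}\,\Lambda(\mathrm d\theta)$, a non‑negative $I_\Delta$‑martingale with $M_0=1$; a Laplace/saddle‑point evaluation of $M_n$ (its $\theta$‑integrand peaks at $-W_n/n$ with value $e^{W_n^2/(2n)}$) shows that $M_{N_1(T)}$ is too large to be consistent with the bound $\le\delta$ unless $N_1(T)$ is already of order $\Delta^{-2}\ln\ln\Delta^{-2}$ — the quantitative threshold being dictated by the envelope $\max_{n\le M}|W_n|\le(1+o(1))\sqrt{2M\ln\ln M}$ furnished by the law of the iterated logarithm, with $M$ a high‑probability (Markov‑type) ceiling on $N_1(T)$ derived from $\E_{I_\Delta}[T]$. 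Combining this with $\PP_{I_\Delta}(\hat k=1)\ge1-\delta$ then forces $\E_{I_\Delta}[T]\ge\E_{I_\Delta}[N_1(T)]\ge(2-4\delta-\varepsilon)\Delta^{-2}\ln\ln\Delta^{-2}$ for all small $\Delta$, whence the theorem. Heuristically, the factor $2$ is the reciprocal of the per‑sample Gaussian information $\tfrac12$, while $1-2\delta$ is the linear prefactor of the binary divergence $d(1-\delta,\delta)=(1-2\delta)\ln\frac{1-\delta}{\delta}$, whose logarithmic term is promoted to $\ln\ln\Delta^{-2}$ by the mixture over the scale window.

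The main obstacle is the sharpness of the constant. Crude devices — a union bound over a geometric ladder $\theta\in\{\Delta,2\Delta,4\Delta,\dots\}$, or a crude Markov truncation of $T$ — each cost a multiplicative constant and only deliver $c\,\Delta^{-2}\ln\ln\Delta^{-2}$ with $c$ unspecified. Getting precisely $2-4\delta$ requires tuning $\Lambda$ (its window $\rho_\Delta$ and its density) and the truncation level so that the fluctuation term $\theta W_{N_1(T)}$, the quadratic compensator $\tfrac12\theta^2N_1(T)$, and the divergence $d(1-\delta,\delta)$ balance to leading order in $\ln\ln\Delta^{-2}$, and verifying that the truncation and the LIL envelope cost only $o(\ln\ln\Delta^{-2})$. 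That the exponent $1$ on $\ln\ln\Delta^{-2}$ cannot be lowered is confirmed by the matching upper bound of Theorem~\ref{thm:UBforBandits}.
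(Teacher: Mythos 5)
Your opening reduction is, in fact, the entirety of the paper's proof: once you observe that symmetry of $\tilde\phi$ and of the Gaussian forces the location parameter $\theta_k$ to coincide with the mean of arm $k$, RBAI with $K=2$ becomes ordinary Gaussian best-arm identification, and the paper at that point simply invokes \cite[Corollary 1]{jamieson2014lil}, which is itself a restatement of \cite[Theorem 1]{farrell1964asymptotic} on sequential sign testing. Everything after your first paragraph is therefore an attempt to \emph{reprove} Farrell's classical theorem rather than to prove the new content of \Cref{thm:LBforBandits}, and that is where your write-up stops short of a proof. The change-of-measure identity, the mixture martingale $M_n=\int e^{-\theta W_n-\theta^2 n/2}\,\Lambda(\diff\theta)$, and the LIL envelope are indeed the right ingredients (this is essentially the method-of-mixtures route to Farrell's bound), but the sharp constant $2-4\delta$ is exactly the point at which the argument is delicate: as you yourself note, a geometric ladder of shifts or a crude Markov truncation of $T$ only yields $c\,\Delta^{-2}\ln\ln\Delta^{-2}$ with an unspecified $c$, and you do not carry out the tuning of $\Lambda$, the truncation level, and the saddle-point evaluation that would pin the constant down. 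Since establishing that constant is the whole substance of Farrell's theorem (whose original proof is itself lengthy), the honest status of your argument is: correct and complete up to the citation of a known result, and an informative but non-rigorous sketch of that known result thereafter.

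Two smaller points worth checking if you do want a self-contained proof. First, your coupling shifts only arm $1$, so the mixture argument lower-bounds $\E_{I_\Delta}[N_1(T)]$; you then use $\E[T]\ge\E[N_1(T)]$, which does give the claimed constant (the per-sample information is $\theta^2/2$ with $\theta\downarrow\Delta$, so the transportation heuristic gives $\E[N_1]\gtrsim 2(1-2\delta)\Delta^{-2}\ln\ln\Delta^{-2}$), but this bookkeeping should be made explicit since it is exactly where a spurious factor of $2$ could be lost. Second, the Laplace evaluation of $M_{N_1(T)}$ peaks at $\theta=-W_n/n$, which lies in your window $[\Delta,\rho_\Delta\Delta]$ only on an event where $W_n$ is negative and of the right magnitude; controlling the contribution of the complementary event is part of what the LIL envelope must deliver, and is not free. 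The paper sidesteps all of this by citation, which is the pragmatic choice here.
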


To complete this section, we report the results 
of some  basic numerical experiments. 

\paragraph{Numerical experiments}

The values of $\theta_k$'s in our experiments 
were chosen according to the "$\alpha$-model" from \citep{jamieson2014lil} with $\alpha=0.3$. It imposes an exponential decrease on the means, that is $\theta_k = 1 - \left({k}/{K}\right)^\alpha$. Along with these means, we consider three reward generating processes : \emph{Gaussian rewards}, where $Y_{i,k}\simiid\nolinebreak\mathcal{N}(\theta_k, \sigma^2)$, \emph{Huber contaminated rewards}, where $Y_{i,k}\simiid\nolinebreak (1-\varepsilon)\mathcal{N}(\theta_k, \sigma^2) + \varepsilon Cauchy(\theta_k)$ for $\varepsilon=5\%$ and finally \emph{Student rewards}, where $Y_{i,k}\simiid\nolinebreak\mathcal Student_2(\theta_k)$ (i.e. Student distribution with $2$ degrees of freedom).
Note that all of these processes are mean and median centered around the $\theta_k$'s. To test the robustness of the compared algorithm, we tuned their parameters to fit the Gaussian reward scenario. 

In this set-up, we compared the original lil'UCB algorithm from \citep{jamieson2014lil}---see also \citep{JamiesonN14} for a more comprehensive experimental evaluation---and our version described
in \Cref{algo} where $\hat{\theta}_{k, n}$ is the empirical median of rewards from arm $k$ up to time $n$ (this corresponds to the $M$-estimator associated with the absolute loss). In order to lead a fair comparison  we assigned the same values to parameters shared by both procedures and set the values as in \citep{jamieson2014lil} : $\beta = 1$, $\lambda = (1 + \nicefrac{2}{\beta})^2$, $\sigma=0.5$, $\varepsilon=0.01$ and confidence $\nu=0.1$. Note that, as underlined by the authors of the paper, the choice of $\lambda$ does not fit the theoretical result from \citep{jamieson2014lil}. This choice is justified by the fact that $\lambda$ should theoretically be proportional to $(1 + \nicefrac{2}{\beta})^2$ with a constant converging to $1$ when the confidence approaches $0$. For our algorithm we chose $r=0.5$  which implies $\alpha=0.97, n_0=423$. The confidence level of our procedure is set to $\delta = \left(\nicefrac{\sqrt{11\nu + 9}-3}{11}\right)^2$ to get a global confidence level of $1-\nu$ .

The results, obtained by 200 independent runs of each algorithm on both settings, over several number of arms values, are depicted in \Cref{fig:nb_pulls} and \Cref{tab:correct_bai}.
The confidence of each procedure was adapted to reach a global confidence at least $90\%$. \Cref{tab:correct_bai} shows the proportion of times that each algorithm returned the correct best arm. We observe, that lil'UCB performed poorly on the non-Gaussian models. The performance of lil'UCB  deteriorates as the number of arms grow in the Huber scenario while it does not seem to be affected by the number of arms in the Student scenario.  In contrast, median lil'UCB performs well in all three scenarios.

\Cref{fig:nb_pulls}
displays the number of pulls for each algorithm when reaching its stopping criterion as a function of the number of arms $K$.
The curves represent the average number of pulls over the 200 runs while the colored areas around the curves are delimited by the maximum and the minimum  number of pulls over the 200 runs. We observe that the number of pulls of lil'UCB increases for non-Gaussian models and that the curves for median lil'UCB are almost identical for the three scenarios. The number of pulls for median lil'UCB is higher than the number of pulls for lil'UCB in the Gaussian and Student models. However, in the Huber model lil'UCB requires more pulls when the number of arms is higher. Note that the lil'UCB curve in the Gaussian model and the three median lil'UCB curves  have the same shape hence the same dependence in the problem complexity $\mathbf{H}_1$. 

\begin{figure}
    \centering
    \includegraphics[width=0.7\linewidth]{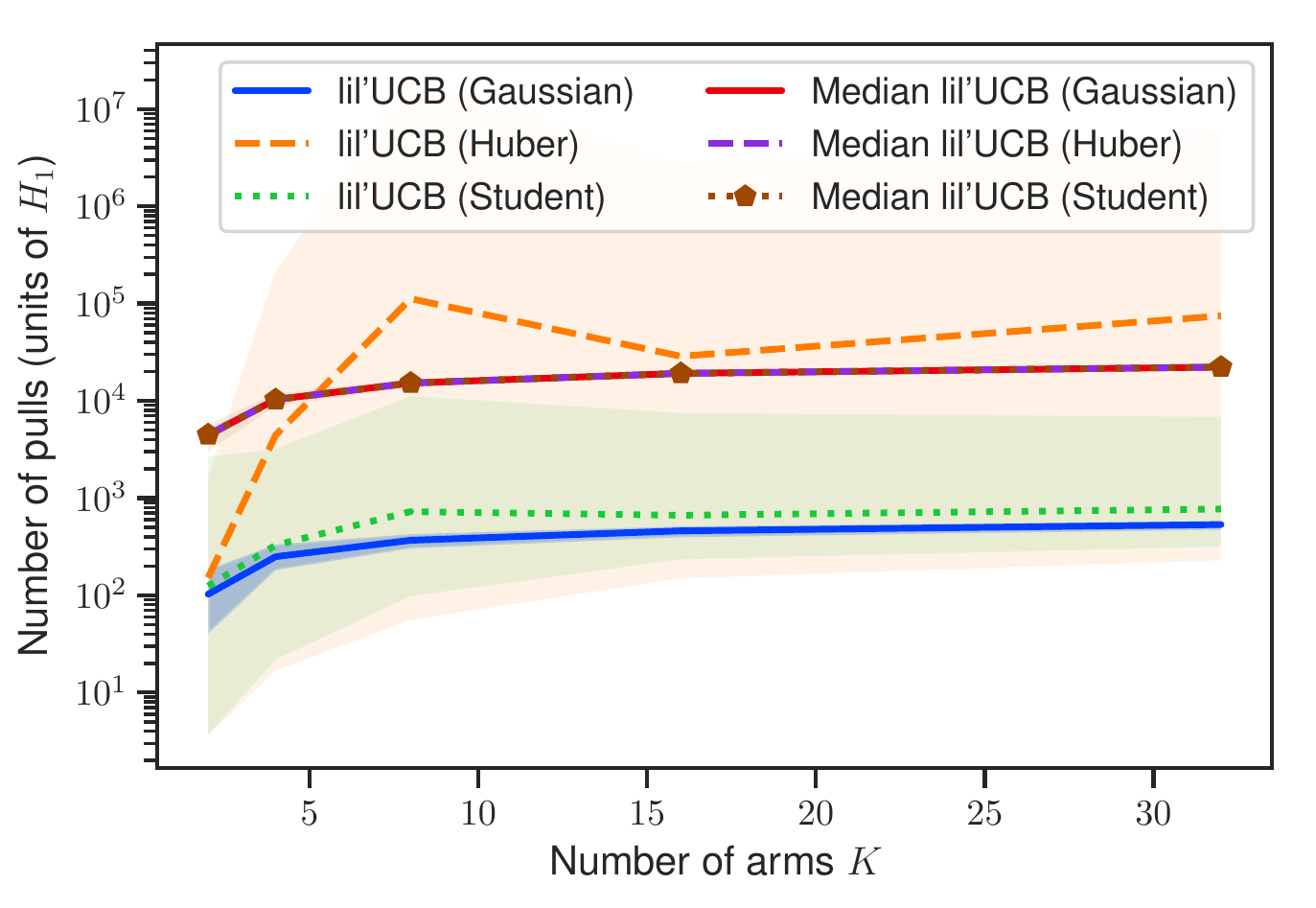}
    \caption{Total number of pulls in units of the complexity $H_1 \approx 3/2n $.}
    \label{fig:nb_pulls}
\end{figure}

These basic numerical experiments illustrate the lack of robustness of lil'UCB against heavy tail scenario and the effective robustness of median lil'UCB. However, this robustness comes with a higher number of pulls which is superfluous in sub-Gaussian scenario. Therefore median lil'UCB should be preferred to vanilla lil'UCB only if one suspects heavy-tailed rewards.

\begin{table}
    \centering
    \caption{Proportion of correct best arm identification (over 200 runs per scenario/algorithm).}
\begin{tabular}{llrrrrr}
\toprule
Scenario & Algorithm &   K=2 &    K=4 &    K=8 &   K=16 &   K=32 \\
\midrule
\multirow{2}{*}{Gaussian} & lil'UCB &  1.000 &  1.000 &  1.000 &  1.000 &  1.000 \\
        & Median lil'UCB &  1.000 &  1.000 &  1.000 &  1.000 &  1.000 \\
\midrule
\multirow{2}{*}{Huber} & lil'UCB &  0.915 &  0.820 &  0.750 &  0.745 &  0.645 \\
        & Median lil'UCB &  1.000 &  1.000 &  1.000 &  1.000 &  1.000 \\
\midrule
\multirow{2}{*}{Student} & lil'UCB &  0.915 &  0.975 &  0.915 &  0.965 &  0.950 \\
        & Median lil'UCB &  1.000 &  1.000 &  1.000 &  1.000 &  1.000 \\
\bottomrule
\end{tabular}
    \label{tab:correct_bai}
\end{table}
\label{sec:4}

\section{Conclusion and further work}\label{sec:5}

We have proved nonasymptotic law of iterated
logarithm for general $M$-estimators both in one
dimensional and in multidimensional setting. 
These results can be seen as off-the-shelf 
deviation bounds  that are uniform in the sample 
size and, therefore, suitable for on-line learning 
problems and problems in which the sample size may
depend on the observations. There are several avenues 
for future work. For simplicity, in the multi-dimensional 
case, the population risk is assumed to be above 
an elliptic paraboloid on the whole space. First 
in our agenda is to replace this condition by a local 
curvature one. A second interesting line of future 
research is to prove the LIL for sequential estimators 
such as the on-line gradient descent. It would also be
of interest to obtain ``in-expectation'' bounds of the
same type as those established for the mean in
\citep{Shin19}. Regarding applications, the 
multi-dimensional LIL could be used to obtain theoretical 
guarantees in bandit problems with covariates. 

\bibliography{biblio}

\newpage
\section{Proofs}\label{sec:6}

This section contains the proofs of the main theorems stated and discussed
in the main body of the paper. Some technical lemmas used in the proofs
of this section are postponed to \Cref{sec:postponed_proofs}.

\subsection{Proof of \texorpdfstring{\Cref{MainThm1d}}{Theorem 1}}

Let $\delta \in (0, 1)$. Define the sequence $t(n)$ by setting 
\begin{align}\label{t_n}
    t(n) = \frac{3.4 \sigma}{\alpha}\sqrt{\frac{\ln \ln 2n + 0.72\ln (\nicefrac{10.4}{\delta})}{n}}
\end{align} 
for any integer $n \geq 1$ and define $n_0=n_0(\alpha,r,\delta)$
to be the smallest integer $n\ge 1$ for which $t(n) \le r$. We 
intentionally omit the dependence of $t(n)$ in $\delta$ to 
lighten notations. We only detail the proof for the upper 
bound of the probability of the event 
\begin{align}
    \mathcal{A} \coloneqq \left\{ \exists n \geq n_0 
    \text{ such that } \widehat{\theta}_n - \theta^* > t(n) \right\},    
\end{align}
the proof for upper bounding the probability of the event 
$\mathcal{A}' \coloneqq \big\{ \exists n \geq n_0, 
\theta^* - \hat{\theta}_n > t(n) \big\}$ is very similar. 
Our proof can be decomposed into two steps : first, we show 
that we can reduce the problem of upper bounding the probability 
of the event $\mathcal{A}$ to the problem of uniformly bounding 
a sum of sub-Gaussian random variables ; then we employ a tight 
uniform concentration inequality for the sum of sub-Gaussian 
random variables.
\begin{lemma}\label{lem:m_estimator_to_SG}
Under \Cref{as:convex_phi,as:Phi_strongly_convex,as:SG}, for 
any integer $n \geq n_0$ and positive real $t \in (0, r]$, 
there exist $n$ Ni.i.d.\ $\sigma^2$-sub-Gaussian random variables
$Z_1(t),\ldots,Z_n(t)$ such that 
\begin{align}
    \mathcal{A}_n(t) \coloneqq \big\{\hat{\theta}_n > 
    \theta^* + t \big\}\subset \mathcal{B}_n (t) = 
    \bigg\{\sum_{i=1}^n Z_i(t) \geq \frac{\alpha}{2} 
    n t\bigg\}.
\end{align}
\end{lemma}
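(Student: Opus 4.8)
The plan is to leverage convexity of the empirical risk $\hat\Phi_n$ to convert the event $\{\hat\theta_n>\theta^*+t\}$ into an inequality between the values of $\hat\Phi_n$ at two deterministic points, and then to recognize that inequality as a large-deviation event for an average of i.i.d.\ sub-Gaussian variables. First I would record that, by \Cref{as:convex_phi}, $\hat\Phi_n$ is convex and coercive, so $\hat\theta_n$ is well defined and $\hat\Phi_n$ is non-increasing on $(-\infty,\hat\theta_n)$. On the event $\mathcal A_n(t)=\{\hat\theta_n>\theta^*+t\}$ the two points satisfy $\theta^*<\theta^*+t<\hat\theta_n$, hence both lie to the left of a minimizer, and therefore $\hat\Phi_n(\theta^*+t)\le\hat\Phi_n(\theta^*)$. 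Concretely, writing $\theta^*+t=(1-\lambda)\theta^*+\lambda\hat\theta_n$ with $\lambda\in(0,1)$ and using convexity together with $\hat\Phi_n(\hat\theta_n)\le\hat\Phi_n(\theta^*)$ gives the claim. Thus $\mathcal A_n(t)\subset\{\hat\Phi_n(\theta^*+t)-\hat\Phi_n(\theta^*)\le 0\}$. Since $\phi$ is not assumed differentiable, this order-theoretic use of convexity is the only place where \Cref{as:convex_phi} enters.

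Next I would introduce the relevant variables. Write $\hat\Phi_n(\theta^*+t)-\hat\Phi_n(\theta^*)=\frac1n\sum_{i=1}^n W_i$ with $W_i:=\phi(Y_i,\theta^*+t)-\phi(Y_i,\theta^*)$; these are i.i.d.\ with mean $\E[W_1]=\Phi(\theta^*+t)-\Phi(\theta^*)$, and by \Cref{as:SG} each $W_i$ is $\sigma^2 t^2$-sub-Gaussian. Define
\[
Z_i(t):=\frac{\E[W_1]-W_i}{t}=\frac{\bigl(\Phi(\theta^*+t)-\Phi(\theta^*)\bigr)-\bigl(\phi(Y_i,\theta^*+t)-\phi(Y_i,\theta^*)\bigr)}{t}.
\]
The $Z_i(t)$ are i.i.d.\ (as the $Y_i$ are), centered, and $\sigma^2$-sub-Gaussian, using the scaling property that multiplying a $v$-sub-Gaussian variable by $1/t$ yields a $v/t^2$-sub-Gaussian one.

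To conclude, I would combine the two ingredients. On $\mathcal A_n(t)$ we have $\sum_{i=1}^n W_i\le 0$ by the first step, while \Cref{as:Phi_strongly_convex} gives $\E[W_1]=\Phi(\theta^*+t)-\Phi(\theta^*)\ge\tfrac{\alpha}{2}t^2$ since $0<t\le r$. Hence
\[
\sum_{i=1}^n Z_i(t)=\frac1t\Bigl(n\,\E[W_1]-\sum_{i=1}^n W_i\Bigr)\ge\frac1t\Bigl(n\cdot\tfrac{\alpha}{2}t^2-0\Bigr)=\frac{\alpha}{2}\,n\,t,
\]
which is exactly $\mathcal A_n(t)\subset\mathcal B_n(t)$. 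The hypotheses $n\ge n_0$ and $t\le r$ are used only through $t\le r$ (needed for the curvature bound); $n_0$ itself plays no role beyond matching the notation of the main proof.

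I expect the only genuine obstacle to be making the first step fully rigorous in the non-smooth case: one must argue purely from convexity (no first-order optimality condition), and be careful that a minimizer of $\hat\Phi_n$ need not be unique — it is enough that \emph{some} minimizer lies strictly to the right of $\theta^*+t$, which is precisely what the event $\mathcal A_n(t)$ provides. Everything afterwards is routine bookkeeping with sub-Gaussian scaling and the curvature inequality of \Cref{as:Phi_strongly_convex}.
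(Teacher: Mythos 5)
Your proof is correct and follows essentially the same route as the paper: the convex-combination argument showing $\hat\Phi_n(\theta^*+t)\le\hat\Phi_n(\theta^*)$ on $\mathcal A_n(t)$, the curvature lower bound $\Phi(\theta^*+t)-\Phi(\theta^*)\ge\tfrac{\alpha}{2}t^2$ for $t\le r$, and the rescaled centered differences $Z_i(t)$ are exactly the paper's construction (your $W_i$ is just the negative of the paper's summand, so the resulting $Z_i(t)$ coincide). No gaps.
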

\begin{myproof}%[Proof of Lemma 1]
For any integer $n\geq n_0$ and real $t\in (0,r]$, we set
\begin{align}
    S_n(t) &= n\big(\hat\Phi_n(\theta^*)-\Phi(\theta^*)\big)- n\big(\hat\Phi_n(\theta^*+t)-\Phi(\theta^*+t)\big)\\
    &= n\big(\hat\Phi_n(\theta^*)-\hat\Phi_n(\theta^*+t)\big)+ n\big(\Phi(\theta^*+t)-\Phi(\theta^*)\big).\label{eqq1}
\end{align} 
\Cref{as:convex_phi} ensures that the empirical risk 
$\hat\Phi_n$ is convex and coercive, thus, 
\begin{align}
    \mathcal{A}_n(t) \subset \big\{\hat\Phi_n(\theta^*)\geq 
\hat\Phi_n(\theta^*+t)\big\},
\end{align} 
see \Cref{fig:U_shape} for an illustration of this implication. 
Using \eqref{eqq1} and the lower-boundedness of the population risk 
$\Phi$ by a quadratic function (\Cref{as:Phi_strongly_convex}), 
we arrive at
\begin{align}
    \mathcal{A}_n(t) \subset \big\{S_n(t)  \geq 
    n\left(\Phi(\theta^* + t) - \Phi(\theta^*)\right)\big\}
    \subset \big\{S_n(t)  \geq \frac{\alpha}{2}nt^2\big\}
    \subset\bigg\{\frac{S_n(t)}{t} \geq \frac{\alpha}{2}nt^2\bigg\}.
\end{align}
Finally, using the definition of $\hat\Phi_n$, we can write 
$S_n(t)$ as follows
\begin{align}
    \frac{S_n(t)}{t} = \sum_{i=1}^n t^{-1}\big\{\underbrace{
    \phi(Y_i,\theta^*)
    -\phi(Y_i,\theta^*+t)-\mathbb E\big[\phi(Y_i,\theta^*)-
    \phi(Y_i,\theta^*+t)\big]}_{:=tZ_i(t)}\big\}. 
\end{align}
The random variables $Z_i(t)$ are clearly centered and i.i.d.
Furthermore, it follows from \Cref{as:SG} that 
$Z_i(t)$ is sub-Gaussian variables with variance proxy $\sigma^2$. 
This completes the proof. 
\end{myproof}
\definecolor{xdxdff}{rgb}{0.,0.,1}
\begin{figure}
\centering
\begin{tikzpicture}[line cap=round,line join=round,>=triangle 45,x=1cm,y=1cm]
\begin{axis}[
x=1cm,y=0.5cm,
axis lines=middle,
xlabel = $\theta$,
grid style=dashed,
ymajorgrids=false,
xmajorgrids=false,
xmin=-0.5,
xmax=7.435978260869557,
ymin=-0.5,
ymax=7.539371980676332,
xtick={\empty},
ytick={\empty},
extra x ticks={2, 3,4},
extra x tick labels={$\theta^*$, $\theta^* + t$, $\hat{\theta}_{n}$}],
\clip(-1.0509782608695646,-0.6747584541062818) rectangle (12.435978260869557,7.539371980676332);
\draw [samples=1000,rotate around={0:(4,1)},xshift=4cm,yshift=0.5cm,line width=2pt,domain=-5:5)] plot (\x,{(\x)^2/2/0.5});
\begin{scriptsize}
\draw[color=black] (6.7,6.1410024154589373) node {$\hat{\Phi}_n$};
\draw [fill=xdxdff] (2,5) circle (2pt);
\draw[color=xdxdff] (1.3,5) node {$\hat{\Phi}_n(\theta^*)$};
\draw [fill=xdxdff] (3,2) circle (2pt);
\draw[color=xdxdff] (1.9,2) node {$\hat{\Phi}_n(\theta^* + t)$};
\draw [fill=xdxdff] (4,1) circle (2pt);
\draw[color=xdxdff] (4.05,1.9) node {$\hat{\Phi}_n(\hat{\theta}_{n})$};
\end{scriptsize}
\end{axis}
\end{tikzpicture}
\caption{Illustration of the shape of the function $\hat{\Phi}_n$.}
\label{fig:U_shape}
\end{figure}
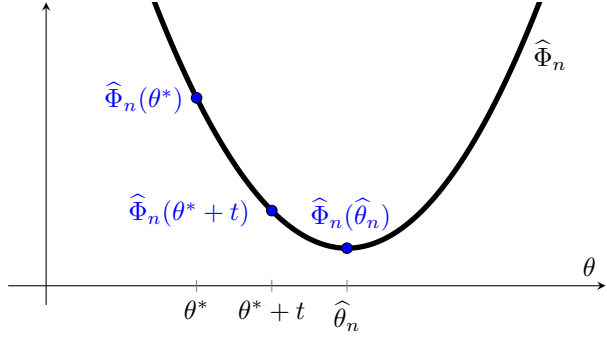

\Cref{lem:m_estimator_to_SG} tells us that, in order to bound the probability of the event 
\begin{align}
    \mathcal{A} = \bigcup_{n=n_0}^\infty \mathcal{A}_n\big(t(n)\big)
\end{align}
it suffices to bound the probability of the event 
\begin{align}
    \mathcal{B} \coloneqq \bigcup_{n=n_0}^\infty \mathcal{B}_n
    \big(t(n)\big)= \bigg\{ \exists n \geq n_0\text{ such that } 
    \sum_{i=1}^n Z_i\big(t(n)\big) \geq \frac{\alpha}{2} n t(n) \bigg\}.    
\end{align} 
Thus, we need a uniform in sample size upper bound on the sum 
of sub-Gaussian random variables. We will use a special case 
of \cite[Theorem 1]{howard2018uniform} which we now state 
(see Eq.~(7) in the original paper).
\begin{theorem}[{\citealp[Theorem 1]{howard2018uniform}}]
\label{thm:Howard}
Let $Z_1, Z_2, \dots$ be independent, zero-mean, $\sigma^2$-sub-Gaussian 
random variables. It holds that, for any confidence $\delta \in (0, 1)$,
\begin{align}
    \mathbb{P}\bigg( \exists\, n \geq 1 : \sum_{i=1}^n Z_i \geq 1.7 \sigma \sqrt{n\big( \ln \ln (2n) + 0.72 \ln({5.2}/{\delta}) \big)} \bigg) \leq \delta.
\end{align}
\end{theorem}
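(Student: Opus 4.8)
The plan is to recognize this as a \emph{time-uniform} (anytime-valid) deviation bound and to prove it by combining an exponential supermartingale with the ``stitching'' construction; this is exactly the route of \cite{howard2018uniform}, and here I specialize it to the sub-Gaussian parameterization. First I would exploit the sub-Gaussianity of each $Z_i$: since $\mathbb{E}[e^{\lambda Z_i}]\le e^{\lambda^2\sigma^2/2}$ for all $\lambda\in\mathbb{R}$, the process $L_n^{\lambda}=\exp\!\big(\lambda S_n-\tfrac{\lambda^2\sigma^2}{2}n\big)$, where $S_n=\sum_{i=1}^n Z_i$ and $\mathcal{F}_n=\sigma(Z_1,\dots,Z_n)$, is a nonnegative supermartingale with $\mathbb{E}[L_0^\lambda]=1$. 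Independence and zero mean are used only through this mixing of moment generating functions, so no further structure on the $Z_i$ is required.

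Second, I would extract a \emph{linear} crossing inequality. By Ville's maximal inequality for nonnegative supermartingales, $\mathbb{P}(\exists\,n\ge 1:L_n^\lambda\ge 1/\delta)\le\delta$, and unwinding the exponent shows that, for each fixed $\lambda>0$, $\mathbb{P}\big(\exists\,n:S_n\ge \tfrac{\ln(1/\delta)}{\lambda}+\tfrac{\lambda\sigma^2}{2}n\big)\le\delta$. At a single target scale $m$, optimizing $\lambda$ by AM--GM brings this boundary down to $\sigma\sqrt{2m\ln(1/\delta)}$ near $n\approx m$, but a fixed $\lambda$ is far too loose away from its tuned scale, so a single line cannot give the $\sqrt{n\ln\ln n}$ envelope.

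Third, and this is the crux, I would \emph{stitch} these linear bounds across geometrically growing epochs. Partition $\{1,2,\dots\}$ into blocks $n\in[\eta^{k-1},\eta^{k})$ for some $\eta>1$ and $k\ge 1$, choose on block $k$ a value $\lambda_k$ tuned to the representative scale $\eta^{k}$, apply the linear inequality with an individual budget $\delta_k$, and take a union bound over $k$. Allocating the budget polynomially, $\delta_k\propto\delta\,k^{-s}$ so that $\sum_k\delta_k=\delta$ (a $\zeta(s)$ normalization), gives $\ln(1/\delta_k)=\ln(1/\delta)+s\ln k+\text{const}$. Since $k\asymp\log_\eta n$, the term $s\ln k$ becomes $\asymp\ln\ln n$, which is precisely the iterated logarithm, and assembling the per-block boundaries into a single envelope yields a bound of the form $\sigma\sqrt{2c_1\,n(\ln\ln n+\ln(1/\delta)+c_2)}$.

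The main obstacle is matching the explicit constants $1.7$, $0.72$, and $5.2$, together with the appearance of $\ln\ln(2n)$ rather than $\ln\ln n$. These come from \emph{jointly} optimizing the epoch ratio $\eta$, the decay exponent $s$, and the within-epoch slack (the mismatch between $\lambda_k$, tuned at $\eta^{k}$, and the actual $n$ inside the block), while simultaneously regularizing the boundary at small $n$, where $\ln\ln n$ is negative or undefined; the shift to $2n$ is what tames the case $n=1$. Reproducing this numerical optimization from scratch is delicate, so the cleanest route is to invoke the sub-$\psi$ line-crossing and stitching bounds of \cite{howard2018uniform} under the sub-Gaussian parameterization, which deliver exactly the stated constants and hence the claimed inequality.
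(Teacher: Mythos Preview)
The paper does not give its own proof of this statement: it is stated verbatim as a special case of \cite[Theorem~1]{howard2018uniform} (the paper even points to Eq.~(7) there) and is used as a black-box tool inside the proof of \Cref{MainThm1d}. Your proposal correctly identifies and sketches the argument from that reference---Ville's inequality applied to the sub-Gaussian exponential supermartingale, followed by geometric ``stitching'' across epochs with a polynomially decaying error budget---so there is nothing to compare against in the present paper; your outline simply reproduces the cited source, which is exactly what the authors rely on.
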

Combining \Cref{lem:m_estimator_to_SG} with \Cref{thm:Howard}, and
taking into account the definition \eqref{t_n} of $t(n)$, we get
\begin{align}
    \mathbb{P}\left(\mathcal{A}\right) \leq \mathbb{P}
    \bigg(\exists n \geq n_0 \text{ such that } \sum_{i=1}^n 
    Z_i\big(t(n)\big) \geq \frac{\alpha}{2} n t(n) \bigg) 
    \leq \delta/2.
\end{align}
One can easily check that an identical upper bound for the probability 
of the event 
\begin{align}
    \mathcal{A}' = \left\{ \exists n \geq n_0 
    \text{ such that }\theta^* - \hat{\theta}_n > t(n) \right\}
\end{align}
can be obtained using the same arguments.

\begin{remark}
Several uniform bounds on the sum of sub-Gaussian random variables 
have been proved (see, e.g. \citep{jamieson2014lil,pmlr-v98-maillard19a} 
and the other theorems from \citep{howard2018uniform}). 
\Cref{fig:comparison} and \Cref{tab:sum_SG_upper_bound} shows a 
comparison between those bounds. The bound from \citep{jamieson2014lil} 
is loosest for any sample size. The bound from \citep{pmlr-v98-maillard19a} 
is the tightest for small sample size while the one from 
\cite{howard2018uniform} becomes the tightest when the sample 
size increases. 
\end{remark}

\begin{table}
\caption{Uniform upper bounds for sum of $t$ i.i.d. 1-sub-Gaussian random variables.}
\label{tab:sum_SG_upper_bound}
\centering
\begin{tabular}{lll}
    \toprule
    Reference & Bound & Confidence \\
    \midrule
    \cite{jamieson2014lil} & $1.57\left[t \left(\ln\ln(1.01t) + \ln(\nicefrac{1}{\delta})\right)\right]^{\nicefrac{1}{2}}$ & $21154 \delta^{1.01}$\\
    \cite{howard2018uniform} & $1.44\left[t\left(1.4 \ln \ln(2t) + \ln\left(\nicefrac{5.19}{\delta}\right) \right)\right]^{\nicefrac{1}{2}}$ & $\delta$\\ % eta = 2, s = 1.4
    \cite{pmlr-v98-maillard19a} & $1.42 \left[ \left(t+1\right)\left(\ln(\sqrt{t+1}) + \ln(\nicefrac{1}{\delta})\right)\right]^{\nicefrac{1}{2}}$ & $\delta$\\
    \bottomrule
 \end{tabular}
\end{table}
\begin{figure}
    \centering
    \includegraphics[width=0.7\linewidth]{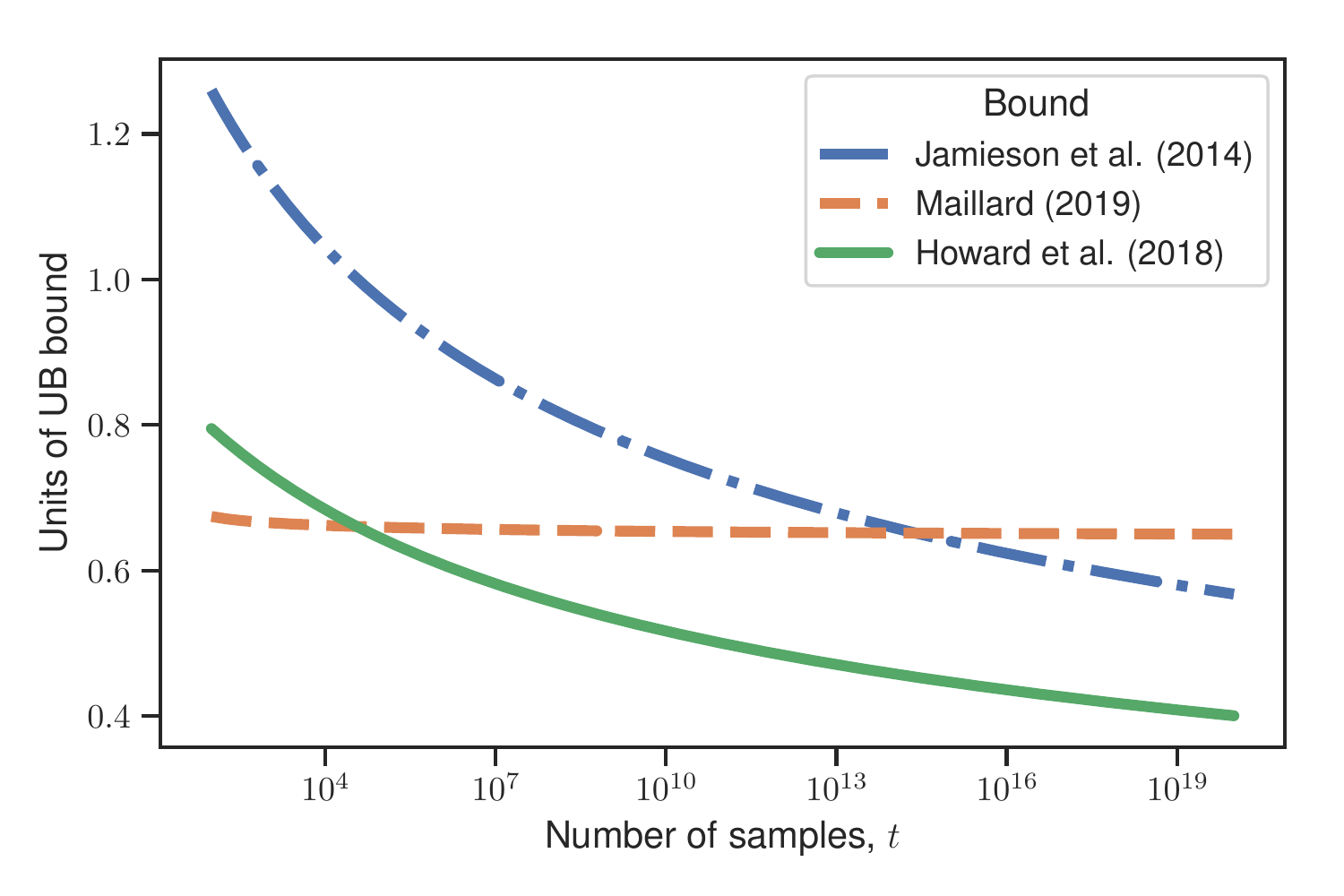}
    \caption{Comparison of uniform, high-probability, upper tail bounds for the sum of i.i.d. sub-Gaussian random variables scaled by $c_{n, \delta}^{UB}$ bound (see \Cref{sec:2}). \citet[Lemma 3]{jamieson2014lil} with $\varepsilon=0.02$, \citet[Lemma 15]{pmlr-v98-maillard19a} and \citet[Theorem 1]{howard2018uniform} with $ \eta=2.04, s=1.4$. Global confidence is set to $\nu = 0.1$.}
    \label{fig:comparison}
\end{figure}%

\subsection{Proof of \texorpdfstring{\Cref{main:thm2}}{Theorem 2}}

Without loss of generality, we assume hereafter that $\bs{a}$ is a unit vector. 
Let $\beta\in(1,2)$ and $\varepsilon>0$ be two constants that we will choose to be equal to $1.1$ and $0.2$, respectively. Throughout the proof we consider, for $k \in \mathbb{N}$, the sequence of integers, $n_0 = 4$, $n_{k+1} = \lceil\beta n_k\rceil$ and the sequence of integer intervals $I_k = [ n_k, n_{k+1})\cap\mathbb{N}$.
We define the sequence $(t(n))_{n \in \mathbb{N}}$ by setting
\begin{align}
        t(n) = \frac{10\varrho_n B}{\sqrt{3}}\sqrt{\frac{(1+\varepsilon)\ln\ln_\beta n + \ln(1/\delta) + 5/8}{n}}, \quad \text{ for } n \geq 1\label{cn}.
\end{align}
For readability we write $t(n_k) = t_{k}$ for any integer $k$. We wish to upper bound the probability of
the event
\begin{align}
    \mathcal{A} = \bigcup_{n=4}^\infty \mathcal{A}_n,\quad \text{where} \quad\mathcal{A}_n = \{ {\bs a}^\top(\btheta^* - \bm{\hat{\theta}}_n) > t(n)\}.
\end{align}
Define the set $\mathcal{V} = \big\{{\bs v} \in \mathbb{R}^d, {\bs v}^\top{\bs a} = 1\big\}$ and the random variable 
\begin{align}
    S_n({\bs w}) = n\left(\hat\Phi_n(\btheta^*) -\Phi_n(\btheta^*)\right) -n\left(\hat\Phi_n(\btheta^*-\bs{w}) - \Phi_n(\btheta^* - \bs w) \right).
\end{align}
We have the following lemma resulting from the convexity assumptions.
\begin{lem}\label{lem:1}
Under \Cref{as:convex_lipschitz_phi,as:convex_penalty,as:Phi_strongly_convex_everywhere}, for any integers $k  \in \mathbb{N}, n \in I_k$, the event $\mathcal{A}_n$ is included in the event 
\begin{align}
    \mathcal{B}_{n} \coloneqq
    \left\{ \sup_{{\bs w} \in t_{k+1}\mathcal{V}} \left[ S_n(\bs w) - (\nicefrac{ \alpha_n}{2}) n \| {\bs w}\|^2\right] \geq 0   \right\}.
\end{align}
\end{lem}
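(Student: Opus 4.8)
The plan is a pathwise localization argument, in the spirit of \Cref{lem:m_estimator_to_SG}. I would work on the event that $\hat\Phi_n$ is convex, which holds $P$-almost surely under \Cref{as:convex_lipschitz_phi,as:convex_penalty} (or directly by the remark following them), and suppose $\mathcal A_n$ occurs, so that ${\bs a}^\top(\btheta^* - \hat\btheta_n) > t(n) > 0$. The first step is to contract the displacement $\btheta^* - \hat\btheta_n$ onto the affine slice $t_{k+1}\mathcal V = \{{\bs w}: {\bs a}^\top{\bs w} = t_{k+1}\}$: I would take ${\bs w} = \lambda(\btheta^* - \hat\btheta_n)$ with $\lambda = t_{k+1}/({\bs a}^\top(\btheta^* - \hat\btheta_n))$, so that ${\bs a}^\top{\bs w} = t_{k+1}$, i.e.\ ${\bs w}\in t_{k+1}\mathcal V$. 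Crucially $\lambda\in(0,1)$: positivity is immediate, while $\lambda<1$ because $t_{k+1}\le t(n) < {\bs a}^\top(\btheta^* - \hat\btheta_n)$, where $t_{k+1}\le t(n)$ holds for $n\in I_k$ since $n<n_{k+1}$ and $n\mapsto t(n)$ is non-increasing---this is where the hypothesis that $\ln\ln n/(n\alpha_n^2)$ is decreasing enters.

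With ${\bs w}$ on the slice, the second step invokes convexity: as $\lambda\in(0,1)$, the point $\btheta^* - {\bs w} = (1-\lambda)\btheta^* + \lambda\hat\btheta_n$ is a proper convex combination of $\btheta^*$ and the minimizer $\hat\btheta_n$, hence $\hat\Phi_n(\btheta^* - {\bs w})\le (1-\lambda)\hat\Phi_n(\btheta^*) + \lambda\hat\Phi_n(\hat\btheta_n)\le \hat\Phi_n(\btheta^*)$, i.e.\ $\hat\Phi_n(\btheta^*) - \hat\Phi_n(\btheta^* - {\bs w})\ge 0$. The third step brings in the population curvature: regrouping the definition of $S_n$ yields $S_n({\bs w}) = n(\hat\Phi_n(\btheta^*) - \hat\Phi_n(\btheta^* - {\bs w})) + n(\Phi_n(\btheta^* - {\bs w}) - \Phi_n(\btheta^*))$, where the first bracket is $\ge 0$ by the previous step and the second is $\ge (\nicefrac{\alpha_n}{2})\|{\bs w}\|^2$ by \Cref{as:Phi_strongly_convex_everywhere} applied to $-{\bs w}$. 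Therefore $S_n({\bs w}) - (\nicefrac{\alpha_n}{2})n\|{\bs w}\|^2\ge 0$, and since ${\bs w}\in t_{k+1}\mathcal V$ the supremum defining $\mathcal B_n$ is nonnegative, so $\mathcal A_n\subset\mathcal B_n$.

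The only step requiring real care is the contraction: one must reach the slice $t_{k+1}\mathcal V$ while staying on the segment between $\hat\btheta_n$ and $\btheta^*$, since past $\hat\btheta_n$ convexity provides no upper bound on $\hat\Phi_n$; this is precisely what $t_{k+1}\le t(n)$---equivalently, the monotonicity of $t$---guarantees. Everything else is the one-dimensional convexity-plus-curvature mechanism of \Cref{lem:m_estimator_to_SG}, now applied along the line through $\btheta^*$ in the direction $\btheta^* - \hat\btheta_n$; boundedness of features and finiteness of expectations (\Cref{as:boundedness,as:exp}) are not used here and enter only in the subsequent sub-Gaussian concentration step.
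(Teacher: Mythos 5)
Your proof is correct and follows essentially the same route as the paper's: the scaling $\bs w = \lambda(\btheta^*-\hat\btheta_n)$ with $\lambda = t_{k+1}/({\bs a}^\top(\btheta^*-\hat\btheta_n))\in(0,1)$ is exactly the paper's point $\bm{\bar\theta}_n = \btheta^* - t_{k+1}\bs v_n^*$, followed by the same convexity and curvature steps. Your remark pinpointing where the monotonicity of $t(n)$ (hence the decreasingness of $\ln\ln n/(n\alpha_n^2)$) is used matches the paper's (terser) justification that $p_n\in(0,1)$.
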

The proofs of the lemmas stated in this section are postponed to \Cref{sec:postponed_proofs}.
Combining \Cref{lem:1} with a union bound gives
\begin{align}
    \mathbb{P}\left(\mathcal{A} \right)
    &\le  \mathbb{P}\bigg(\bigcup_{k \geq 0} \bigcup_{n \in I_k}\mathcal{B}_{n} \bigg)\leq \sum_{k \geq 0} \mathbb{P}\bigg(\bigcup_{n \in I_k} \mathcal{B}_{n} \bigg).
\end{align}
Let k be an integer. Since the sequence $(\alpha_n)_n$ is non-increasing we have, for any integer $n \in I_k$, $\alpha_n \geq \alpha_{n_{k+1}}$. Setting $\beta = 1.1$ we have $n_k/n_{k+1} \geq 4/5$ for $n \geq 4$. Thus, for any positive real $\lambda$,
\begin{align}
    \mathbb{P}\bigg(\bigcup_{n \in I_k} \mathcal{B}_{n} \bigg) &\leq \mathbb{P}\bigg( \sup_{n \in I_k} \sup_{\bs w \in t_{k+1}\mathcal{V}} \bigg[S_n(\bs w) - \frac{\alpha_n}{2} n_k \lVert \bs w \rVert_2^2 \bigg] \geq 0 \bigg)\\
    &\leq  \mathbb{P}\bigg( \sup_{n \in I_k} \sup_{\bs w \in t_{k+1}\mathcal{V}} \bigg[S_n(\bs w) - \frac{2\alpha_{n_{k+1}}}{5} n_{k+1} \lVert \bs w \rVert_2^2 \bigg] \geq 0 \bigg)\\
    &\leq \mathbb{P}\bigg( \sup_{n \in I_k} \sup_{\bs w \in t_{k+1}\mathcal{V}} \exp\left\{\lambda \left(S_n(\bs w) - \frac{2\alpha_{n_{k+1}}}{5} n_{k+1} \lVert \bs w \rVert_2^2 \right) \right\} \geq 1 \bigg).
\end{align}
The stochastic process $\left(\sup_{\bs w \in t_{k+1}\mathcal{V}} \exp\left\{\lambda \left(S_n(\bs w) - 2\alpha_{n_{k+1}} n_{k+1} \lVert \bs w \rVert_2^2/5 \right) \right\}\right), n \in \mathbb{N}^*$, is a submartingale with respect to its natural filtration, therefore, Doob's maximal inequality for submartingales yields,
\begin{align}
    \mathbb{P}\bigg(\bigcup_{n \in I_k} \mathcal{B}_{n} \bigg)
    &\leq \inf_{\lambda > 0} \mathbb{E}\left[\sup_{\bs w \in t_{k+1}\mathcal{V}} \exp\left\{ \lambda \left(S_{n_{k+1}}(\bs w) - \frac{2\alpha_{n_{k+1}}}{5} n_{k+1} \lVert \bs w \rVert_2^2 \right)\right\} \right].\label{eq:expectation_exp_sum}
\end{align}
The next lemma uses classic tools from empirical processes theory such as the symmetrization trick and the contraction principle to bound the expectation from \eqref{eq:expectation_exp_sum}.
\begin{lem}\label{lem:symmetrization_contraction}
Under \Cref{as:convex_lipschitz_phi}, given a positive integer $m$ and three positive real numbers $t$, $\alpha$ and $\lambda$, letting $t' = (\nicefrac{2m \alpha }{L})t$, we have, 
\begin{align}
    \inf_{\lambda > 0} \mathbb{E}\left[\sup_{\bs{w} \in t \mathcal{V}} \exp\left\{ \lambda \left( S_m(\bs{w}) - \alpha m \lVert \bs{w} \rVert_2^2 \right) \right\} \right] \leq \inf_{\lambda > 0} \mathbb{E}\left[\sup_{\bs{w} \in t' \mathcal{V}} \exp\left\{ \lambda \left(\bs{w}^\top \mathbf{X} \bs{\varepsilon} - \lVert \bs{w} \rVert_2^2/2 \right) \right\} \right].
\end{align}
\end{lem}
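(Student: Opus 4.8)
The plan is to read \Cref{lem:symmetrization_contraction} as a purely empirical-process statement at the level of exponential moments: once the penalty disappears, $S_m(\cdot)$ is a centered empirical process, and the inequality is the concatenation of a symmetrization step, the Ledoux--Talagrand contraction principle (applied with the convex nondecreasing function $\exp$), and a scalar change of variables. \emph{First}, I would rewrite $S_m$. Since the penalty $\rho_m$ enters $\hat\Phi_m$ and $\Phi_m$ identically, it cancels in the difference $\hat\Phi_m-\Phi_m$, so
\begin{align*}
S_m(\bs w)=\sum_{i=1}^m\Big(g_i(\bs w^\top\bX_i)-\E\big[g_i(\bs w^\top\bX_i)\big]\Big),\quad g_i(s):=\phi\big(Y_i,{\btheta^*}^\top\bX_i\big)-\phi\big(Y_i,{\btheta^*}^\top\bX_i-s\big).
\end{align*}
By \Cref{as:convex_lipschitz_phi} each $g_i$ is $L$-Lipschitz with $g_i(0)=0$, so conditionally on the data $S_m(\bs w)$ is a centered empirical process whose $i$-th summand is a contraction vanishing at the origin, composed with the linear form $\bs w\mapsto\bs w^\top\bX_i$.

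\emph{Second}, I would symmetrize. Introducing an independent ghost sample $(\bX_i',Y_i')_{i\le m}$, its functions $g_i'$, and i.i.d.\ Rademacher signs $\varepsilon_i$, and using $\E[g_i(\bs w^\top\bX_i)]=\E'[g_i'(\bs w^\top\bX_i')]$, Jensen's inequality for $\exp$, Fubini, the exchangeability of the two samples, and then convexity of $\exp$ applied to $\sum_i\varepsilon_i(g_i-g_i')=\tfrac12\big(2\sum_i\varepsilon_i g_i\big)+\tfrac12\big(-2\sum_i\varepsilon_i g_i'\big)$ together with the symmetry $-\sum_i\varepsilon_i g_i'\overset{d}{=}\sum_i\varepsilon_i g_i$, I would obtain, for every $\lambda>0$,
\begin{align*}
\E\Big[\sup_{\bs w\in t\mathcal V}e^{\lambda(S_m(\bs w)-\alpha m\|\bs w\|_2^2)}\Big]\le\E\Big[\sup_{\bs w\in t\mathcal V}e^{\lambda(2\sum_i\varepsilon_i g_i(\bs w^\top\bX_i)-\alpha m\|\bs w\|_2^2)}\Big].
\end{align*}
The set $t\mathcal V$ is an unbounded affine hyperplane, but the quadratic term keeps every supremum finite because $|g_i(\bs w^\top\bX_i)|\le L\|\bX_i\|_2\|\bs w\|_2$ grows only linearly in $\|\bs w\|_2$; to be scrupulous one first restricts the supremum to a large ball and lets its radius tend to infinity.

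\emph{Third}, I would contract. Conditioning on the data, write the exponent on the right as $2\lambda L\big(\psi(\bs w)+\sum_i\varepsilon_i\varphi_i(\bs w^\top\bX_i)\big)$ with $\varphi_i=g_i/L$ a contraction vanishing at $0$ and $\psi(\bs w)=-\tfrac{\alpha m}{2L}\|\bs w\|_2^2$, and invoke the contraction principle for Rademacher processes in its one-sided form with an auxiliary function --- for convex nondecreasing $F$, $\E_\varepsilon F\big(\sup_{\bs w}(\psi(\bs w)+\sum_i\varepsilon_i\varphi_i(\bs w^\top\bX_i))\big)\le\E_\varepsilon F\big(\sup_{\bs w}(\psi(\bs w)+\sum_i\varepsilon_i\bs w^\top\bX_i)\big)$ --- with $F=\exp(2\lambda L\,\cdot)$. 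Since $\sum_i\varepsilon_i\bs w^\top\bX_i=\bs w^\top\mathbf X\bs\varepsilon$, this bounds the previous display by $\E\big[\sup_{\bs w\in t\mathcal V}\exp\{2\lambda L\,\bs w^\top\mathbf X\bs\varepsilon-\lambda\alpha m\|\bs w\|_2^2\}\big]$. \emph{Finally}, a scalar change of variables $\bs w=c\,\bs w'$ together with a reparametrisation of $\lambda$ maps $t\mathcal V$ onto $t'\mathcal V$ with $t'=(2m\alpha/L)\,t$ and, after matching the coefficients of the linear and the quadratic terms, turns the exponent into $\mu\big(\bs w'^\top\mathbf X\bs\varepsilon-\tfrac12\|\bs w'\|_2^2\big)$ for the new parameter $\mu>0$; taking the infimum over $\lambda>0$ (equivalently over $\mu>0$) on both sides gives the asserted inequality.

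The step I expect to be the main obstacle is keeping the $\bs w$-dependent quadratic penalty $\alpha m\|\bs w\|_2^2$ under control throughout the reduction. It must be split so that each symmetric half of the Rademacher decomposition is separately integrable over the unbounded set $t\mathcal V$ --- a naive symmetrization produces a supremum equal to $+\infty$ --- and it must be carried as the auxiliary function $\psi(\cdot)$ through the contraction principle, which is precisely what permits the use of the convex function $\exp$ without assuming a bounded index set. Once the offset is handled correctly, the symmetrization and contraction are standard, and the remaining work is the careful bookkeeping of the constants $L$, $\alpha$, $m$ across the rescaling, which is what produces the radius $t'=(2m\alpha/L)\,t$.
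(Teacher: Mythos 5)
Your proof follows essentially the same route as the paper's: a symmetrization that carries the quadratic offset $\alpha m\lVert \bs w\rVert_2^2$ inside the exponential moment, the contraction principle applied conditionally on the data to $\varphi_i=g_i/L$ with the offset as auxiliary function, and a joint rescaling of $\bs w$ and $\lambda$; the only difference is that the paper cites the offset symmetrization (Lecu\'e--Mendelson) rather than rederiving it via the ghost sample. One bookkeeping point: your intermediate bound $\exp\{2\lambda\sum_i\varepsilon_i g_i-\lambda\alpha m\lVert\bs w\rVert_2^2\}$ (offset kept at $\lambda$, not $2\lambda$), pushed through your own rescaling, lands on $t'=(\nicefrac{m\alpha}{L})t$ rather than $(\nicefrac{2m\alpha}{L})t$ --- a factor of two that depends on which variant of the modified symmetrization is used and only affects the numerical constants downstream.
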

Applying \Cref{lem:symmetrization_contraction} with $m=n_{k+1}, \alpha = 2\alpha_{n_{k+1}}/5$ and $t = t_{k+1}$ gives
\begin{align}
    \mathbb{P}\bigg(\bigcup_{n \in I_k} \mathcal{B}_{n} \bigg) \leq \inf_{\lambda > 0} \mathbb{E}\left[\sup_{\bs w \in s_{k+1} \mathcal{V}} \exp \left\{ \lambda(\bs w^\top \mathbf{X} \bs\varepsilon - \lVert \bs w \rVert_2^2/2) \right\} \right], \quad s_{k+1} = \frac{4n_{k+1}}{5\varrho_{n_{k+1}}}t_{k+1}.\label{eq:expectation_exp_rademacher}
\end{align}
For fixed $\mathbf{X}$ and $\bs\varepsilon$, define the concave quadratic function $G(\bs w) \coloneqq \bs w^\top \mathbf{X} \bs\varepsilon - \lVert \bs w \rVert_2^2/2 $. The next lemma results from explicitly computing the supremum inside the expectation in \eqref{eq:expectation_exp_rademacher} and bounding the resulting moment generating function.
For the next lemma, we denote by $B_{\bs{a}^\top \bs{X}}$ the smallest constant $B$ for which
$\mathbb{P}(|\bs{a}^\top\bs{X}_1|\le B)=1$. It is clear that $B_{\bs{a}^\top \bs{X}}\le B_{\|\bs{X}\|}$.
Nevertheless, we prefer to use the constant $B_{\bs{a}^\top \bs{X}}$ for this lemma in order to keep the inequality as tight as possible.
\begin{lem}\label{lem:first_expectation}
    Let $I$ be a finite set of cardinality $m \in \mathbb{N}$. Let $(\bX_i)_{i \in I}$ be i.i.d.\ random 
    vectors in $\mathbb{R}^d$ satisfying \Cref{as:boundedness} and let $(\varepsilon_i)_{i \in I}$ 
    be i.i.d.\ Rademacher variables, independent of $(\bX_i)_{i \in I}$. Then, for any positive 
    constants $s, \mu$ such that $ 8 \mu  m B^2 \leq 1$,
    \begin{align}
        \mathbb{E}\left[\sup_{w \in s \mathcal{V}} e^{\mu G(\bs w)} \right] \le \exp\big\{(ms^2 B_{{\bs a}^\top \bX}^2)\mu^2 + (5 m B^2 - s^2/2)\mu \big\}.\label{exp:1}
    \end{align}
\end{lem}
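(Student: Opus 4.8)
The plan is to reduce the claim to a clean moment-generating-function estimate by first evaluating the inner supremum in closed form. Writing $\bs{Z} := \mathbf{X}\bs{\varepsilon} = \sum_{i\in I}\varepsilon_i\bX_i$ and using $G(\bs{w})=\bs{w}^\top\bs{Z}-\tfrac12\|\bs{w}\|_2^2=\tfrac12\|\bs{Z}\|_2^2-\tfrac12\|\bs{w}-\bs{Z}\|_2^2$, maximizing $G$ over the affine hyperplane $s\mathcal{V}=\{\bs{w}\in\R^d:\bs{a}^\top\bs{w}=s\}$ amounts to minimizing the squared distance from $\bs{Z}$ to that hyperplane, which, since $\bs{a}$ is a unit vector, equals $(\bs{a}^\top\bs{Z}-s)^2$. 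Hence
\[
\sup_{\bs{w}\in s\mathcal{V}}G(\bs{w})=\tfrac12\|\bs{Z}\|_2^2-\tfrac12(\bs{a}^\top\bs{Z}-s)^2=\tfrac12\big(\|\bs{Z}\|_2^2-(\bs{a}^\top\bs{Z})^2\big)+s\,\bs{a}^\top\bs{Z}-\tfrac{s^2}{2}.
\]
I would pull the constant factor $e^{-\mu s^2/2}$ out of the expectation at once; it accounts for the $-s^2\mu/2$ term in the target bound. It then remains to show that $\E\big[\exp\big(\tfrac\mu2(\|\bs{Z}\|_2^2-(\bs{a}^\top\bs{Z})^2)+\mu s\,\bs{a}^\top\bs{Z}\big)\big]\le\exp\big(ms^2B_{\bs{a}^\top\bX}^2\mu^2+5mB^2\mu\big)$.

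Next I would split this expectation by Cauchy--Schwarz into the product of $\big(\E[\exp(\mu(\|\bs{Z}\|_2^2-(\bs{a}^\top\bs{Z})^2))]\big)^{1/2}$ and $\big(\E[\exp(2\mu s\,\bs{a}^\top\bs{Z})]\big)^{1/2}$. The linear factor is handled directly: $\bs{a}^\top\bs{Z}=\sum_{i\in I}\varepsilon_i(\bs{a}^\top\bX_i)$ is a Rademacher sum with $|\bs{a}^\top\bX_i|\le B_{\bs{a}^\top\bX}$, so Hoeffding's lemma gives $\E[\exp(2\mu s\,\bs{a}^\top\bs{Z})]\le\exp\big(2\mu^2s^2\sum_i(\bs{a}^\top\bX_i)^2\big)\le\exp(2\mu^2s^2mB_{\bs{a}^\top\bX}^2)$, whose square root is exactly $\exp(\mu^2s^2mB_{\bs{a}^\top\bX}^2)$, the first term of the bound.

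For the quadratic factor I would discard the subtracted piece, $\|\bs{Z}\|_2^2-(\bs{a}^\top\bs{Z})^2\le\|\bs{Z}\|_2^2$, and then, conditionally on $\mathbf{X}$, linearize through the Gaussian identity $e^{\mu\|\bs{Z}\|_2^2}=\E_{\bs{g}}\big[\exp(\sqrt{2\mu}\,\bs{g}^\top\bs{Z})\big]$ with $\bs{g}\sim\mathcal{N}(\bs{0},\mathbf{I}_d)$ independent of $\bs{\varepsilon}$. Swapping the two expectations and applying Hoeffding's lemma in $\bs{\varepsilon}$ reduces the problem to the Gaussian quadratic integral $\E_{\bs{g}}[\exp(\mu\,\bs{g}^\top\mathbf{M}\bs{g})]=\det(\mathbf{I}_d-2\mu\mathbf{M})^{-1/2}$, where $\mathbf{M}:=\sum_{i\in I}\bX_i\bX_i^\top$. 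The crucial point, which is what keeps the bound free of $d$, is that $\ln\det(\mathbf{I}_d-2\mu\mathbf{M})^{-1/2}=-\tfrac12\sum_j\ln(1-2\mu\lambda_j)\le\mu\,\mathrm{tr}(\mathbf{M})/(1-2\mu\|\mathbf{M}\|_{\mathrm{op}})$ is controlled by $\mathrm{tr}(\mathbf{M})=\sum_i\|\bX_i\|_2^2\le mB^2$ rather than by the dimension, because $8\mu mB^2\le1$ forces $2\mu\|\mathbf{M}\|_{\mathrm{op}}\le2\mu mB^2\le\tfrac14$. This gives $\E[e^{\mu\|\bs{Z}\|_2^2}]\le e^{(4/3)\mu mB^2}$, so its square root is at most $e^{(2/3)\mu mB^2}\le e^{5mB^2\mu}$. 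Multiplying the three factors yields the asserted inequality, with considerable slack in the constant $5$.

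I expect the quadratic factor to be the only genuine obstacle: one needs an exponent that is truly $O(\mu)$ and independent of $d$. A cruder route, bounding $\|\bs{Z}\|_2^2$ via $(\E\|\bs{Z}\|_2)^2\le\E\|\bs{Z}\|_2^2\le mB^2$ together with the sub-Gaussian concentration of the Rademacher norm $\|\bs{Z}\|_2$ about its mean, leads to a bound of the form $(\text{const})\cdot e^{c\mu mB^2}$ whose multiplicative constant does not tend to $1$ as $\mu\downarrow0$, and so cannot be folded into a purely exponential bound valid for all admissible $\mu$ (the lemma is asserted for every $\mu$ satisfying $8\mu mB^2\le1$, including arbitrarily small ones). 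The Gaussian-linearization/determinant computation avoids this defect; the only additional care needed is the convergence requirement $2\mu\|\mathbf{M}\|_{\mathrm{op}}<1$ for the Gaussian integral, which is exactly what the hypothesis $8\mu mB^2\le1$ secures.
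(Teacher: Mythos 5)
Your proof is correct, and for the first half it coincides with the paper's: you compute the exact maximizer of $G$ over the affine hyperplane $s\mathcal{V}$ by completing the square (the paper writes the same maximizer as $\bs{w}_*=\Pi_{{\bs a}^\perp}\mathbf{X}\bs{\varepsilon}+s\bs{a}$, and your identity $\|\bs Z\|_2^2-(\bs a^\top\bs Z)^2=\|\Pi_{{\bs a}^\perp}\bs Z\|_2^2$ is exactly its ``$\Pi_{{\bs a}^\perp}$ is a contraction'' step), you then split by Cauchy--Schwarz and treat the linear term $\bs a^\top\mathbf{X}\bs\varepsilon$ by Hoeffding's lemma conditionally on $\mathbf{X}$, precisely as in the paper. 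Where you genuinely diverge is the quadratic factor $\mathbb{E}[e^{\mu\|\mathbf{X}\bs\varepsilon\|_2^2}]$. The paper isolates this as a separate statement (\Cref{lem:Xepsilon_exp_ineq}) and proves it by integrating the tail of $(\|\mathbf{X}\bs\varepsilon\|_2-\|\mathbf{X}\|_F)_+^2$ against the sub-Gaussian concentration of the norm of a Rademacher sum around $\|\mathbf{X}\|_F$, obtaining the constant $10$ under $\|\mathbf{X}\|_F^2\le 1/8$ (note that their bound does remain of the form $e^{c\|\mathbf{X}\|_F^2}$ as $\mu\downarrow 0$, because the additive ``$1+\int$'' is absorbed via $1+u\le e^u$ with $u=O(\|\mathbf{X}\|_F^2)$ --- so the pitfall you flag in your last paragraph is real for the crude route but is avoided there). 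Your route --- Gaussian linearization $e^{\mu\|\bs Z\|_2^2}=\mathbb{E}_{\bs g}[e^{\sqrt{2\mu}\,\bs g^\top\bs Z}]$, Hoeffding in $\bs\varepsilon$, then the determinant formula $\mathbb{E}_{\bs g}[e^{\mu\,\bs g^\top\mathbf{M}\bs g}]=\det(\mathbf{I}_d-2\mu\mathbf{M})^{-1/2}$ with the log-det controlled by $\mathrm{tr}(\mathbf{M})/(1-2\mu\|\mathbf{M}\|_{\mathrm{op}})$ --- is equally dimension-free (both arguments ultimately hinge on $\|\mathbf{X}\|_F^2=\mathrm{tr}(\mathbf{M})\le mB^2$), is more self-contained in that it needs no external concentration inequality, and yields the sharper constant $4/3$ in place of $10$, hence $2/3$ in place of $5$ after the square root; the hypothesis $8\mu mB^2\le 1$ serves the same role in both proofs, guaranteeing convergence of the relevant integral.
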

Applying \Cref{lem:first_expectation} with $m=n_{k+1}$, $\mu = \lambda = \frac{1}{8n_{k+1}B^2}$ and $s=s_{k+1}$ gives
\begin{align}
    \mathbb{E}\left[\sup_{\bs w \in s_{k+1} \mathcal{V}} e^{\lambda G(\bs w)} \right] &\le \exp\left\{ - \frac{3s_{k+1}^2 - 40n_{k+1}B^2}{64n_{k+1}B^2} \right\}.
\end{align}
The choice of $t_{k+1}$ ensures that $\frac{3s_{k+1}^2 - 40n_{k+1}B^2}{64n_{k+1}B^2} \geq (1+\varepsilon) \ln \ln_\beta n_{k+1} + \ln(1/\delta)$. It follows that
\begin{align}\label{eq:expectation1}
    \mathbb{E}\left[\sup_{\bs w \in s_{k+1} \mathcal{V}} e^{\lambda G(\bs w)} \right] \leq \frac{\delta}{(k+15)^{1+\varepsilon}}.
\end{align}
Finally, summing over all integer $k \geq 0$ and setting $\varepsilon=0.2$, we get
\begin{align}
    \mathbb{P}(\mathcal{A}) \leq \delta \sum_{k\geq 0} \frac{1}{(k+15)^{1+\varepsilon}} \leq 3\delta.
    \end{align}
    
\subsection{Proof of \texorpdfstring{\Cref{thm:UBforBandits}}{Theorem 3}}

In this section, we provide the proof of the upper bound established for
the proposed algorithm in the problem of the best arm identification in the
multi-armed bandit problem. We start with two technical lemmas, then we 
provide two other lemmas that constitute the core technical part of the 
proof of \Cref{thm:UBforBandits}. Finally, in \Cref{ssec6.3.3}, we
put all the pieces together and present the proof of the theorem. 

\subsubsection{Preliminary lemmas}

We state and prove two elementary lemmas which we will need for the proof of \Cref{thm:UBforBandits}.

\begin{lem}\label{lem:first_bandit_inequality}
For $t\geq 1, c>0$ and $0 < \omega \leq 0.15$, we have
\begin{align}
  \frac{1}{t}\ln\left(\frac{\ln(2t)}{\omega} \right) \geq c \implies t \leq \frac{1}{c} \ln\left(\frac{2\ln(1/(c    \omega))}{\omega} \right).
\end{align}
\end{lem}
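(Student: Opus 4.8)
The plan is to treat the hypothesis $\frac{1}{t}\ln\!\big(\tfrac{\ln(2t)}{\omega}\big) \ge c$ as a bound that forces $t$ to be small, and then to bootstrap: a crude upper bound on $t$ feeds into a sharper one. First I would rewrite the hypothesis as $\ln\!\big(\tfrac{\ln(2t)}{\omega}\big) \ge ct$, i.e. $\tfrac{\ln(2t)}{\omega} \ge e^{ct}$, equivalently $\ln(2t) \ge \omega e^{ct}$. Since the left-hand side grows only logarithmically in $t$ while the right-hand side grows exponentially, the inequality can hold only for bounded $t$; the work is to quantify this.

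The key steps, in order, are as follows. \emph{Step 1 (crude bound).} From $ct \le \ln\!\big(\tfrac{\ln(2t)}{\omega}\big)$ and the elementary inequality $\ln x \le x/e \le x$ applied suitably — or more directly from $\ln(2t) \le 2t$ — deduce $ct \le \ln(2t) + \ln(1/\omega) \le 2t \cdot(\text{something})$; a cleaner route is to use $\ln(2t)\le \sqrt{2t}$ for $t\ge 1$ or simply $\ln(\ln(2t)/\omega)\le \ln(2t/\omega)$, giving $ct \le \ln(2t/\omega) = \ln(2/\omega) + \ln t$. Since $\ln t \le t/2$ for... actually one should be careful here; instead use $\ln t \le \omega' t$ type bounds. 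The honest approach: from $ct\le \ln(2t)+\ln(1/\omega)$ and $\ln(2t)\le ct$ cannot both be exploited naively, so I would instead directly bound $\ln(2t) \le \ln(2t/\omega)$ (valid since $\omega\le 1$) and combine with $ct\le\ln(\ln(2t)/\omega)\le \ln(\ln(2t/\omega))$... \emph{Step 2 (substitution).} Having obtained $\ln(2t) \le \tfrac{1}{c}\ln(2t)\cdot$ — rather, having $ct \le \ln(2t/\omega)$, so $t \le \tfrac1c\ln(2t/\omega)$, I substitute this upper bound for $t$ on the right-hand side of the \emph{original} rearranged inequality $ct \le \ln\!\big(\tfrac{\ln(2t)}{\omega}\big)$, using monotonicity of $\ln$, to get $ct \le \ln\!\Big(\tfrac{1}{\omega}\ln\big(\tfrac{2}{c\omega}\ln(2t/\omega)\big)\Big)$, and iterate once more to collapse the nested logarithm to the claimed form $t\le \tfrac1c\ln\!\big(\tfrac{2\ln(1/(c\omega))}{\omega}\big)$. \emph{Step 3 (constants).} The condition $\omega \le 0.15$ and $c>0$, $t\ge 1$ are used to absorb lower-order terms — e.g. to guarantee $\ln(2t/\omega)\le 2\ln(1/(c\omega))$ after the bootstrap, or to ensure the various $\ln\ln$ terms are comparable to constants. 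I would verify the final numerical inequality by checking the worst case, which will occur at the boundary values of $\omega$ and as $c$ ranges, reducing to a one-variable inequality that can be checked by elementary calculus (monotonicity).

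The main obstacle is Step 3: making the nested-logarithm collapse rigorous with the specific constant $2$ and under the specific threshold $\omega \le 0.15$. The bootstrap argument naturally produces a bound of the shape $t \le \tfrac1c\ln\big(\tfrac{C\ln(1/(c\omega))}{\omega}\big)$ for some constant $C$, and pinning down that $C=2$ suffices requires carefully tracking how $\ln(2t)$, after substituting the first-round bound on $t$, compares to $\ln(1/(c\omega))$; this is where the hypothesis $\omega\le 0.15$ (equivalently $\ln(1/\omega)\ge \ln(20/3)>1.89$) is exploited to dominate the stray additive constants. Everything else is routine manipulation of logarithms and monotonicity.
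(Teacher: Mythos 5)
Your proposal is a plan rather than a proof, and the two places where it stops short are exactly where the content of the lemma lives. First, your ``crude bound'' in Step 1 does not close as sketched: from $ct\le\ln(2/\omega)+\ln t$ together with $\ln t\le t/2$ you would get $(c-\tfrac12)t\le\ln(2/\omega)$, which is vacuous for $c\le\tfrac12$ --- and the lemma must hold for \emph{all} $c>0$ (in the bandit application $c=\Delta_k^2/\varkappa$ is small precisely in the hard instances). You sense this (``one should be careful here'', ``use $\ln t\le\omega' t$ type bounds'') but never resolve it; the fix is to take the slope proportional to $c$, e.g.\ $\ln(2t)\le\tfrac{c}{2}t+\ln\bigl(4/(ec)\bigr)$, which yields a first bound of the form $t\le\tfrac{2}{c}\ln\bigl(4/(ec\omega)\bigr)$, but then the $c$-dependent additive term must be carried through the bootstrap. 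Second, Step 3 --- showing that after substitution the nested logarithm collapses with the specific constant $2$ under $\omega\le 0.15$ --- is declared ``the main obstacle'' and left undone. Since the lemma is nothing but a specific constant-explicit inequality, a proof that defers the constant-tracking has not proved the statement.

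For comparison, the paper avoids the bootstrap entirely: it sets $t_*=\tfrac1c\ln\bigl(2\ln(1/(c\omega))/\omega\bigr)$, observes that $f(t)=\tfrac1t\ln\bigl(\ln(2t)/\omega\bigr)$ is decreasing on $[1,\infty)$ (this is where $\omega\le 0.15$ enters, to ensure $\ln(\ln 2/\omega)>1/\ln 2$), and then verifies $f(t_*)\le c$ directly; after unwinding, the latter reduces to $\ln\bigl(2\ln(1/(c\omega))/\omega\bigr)\le\tfrac{1}{2c\omega^2}$, which follows from two applications of $\ln x\le x/2$. If you want to salvage your route, you could do so --- the bootstrap does terminate with the constant $2$ once the first bound is made uniform in $c$ as above --- but you would still need to write out the final one-variable inequality (of the shape $4\omega\,v\ln\bigl(4/(ev)\bigr)\le 1$ with $v=c\omega$) and check it, rather than assert that it can be checked.
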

\begin{proof}
Let $f(t) = \frac{1}{t}\ln\big(\frac{\ln(2t)}{\omega} \big)$, defined for any $t\geq1$ and $t_* = \frac{1}{c} \ln\big(\frac{2\ln(1/(c\omega))}{\omega} \big)$.
It suffices to show that $f(t_*) \leq c$. Indeed, since the function $f$ is decreasing, it implies that $f(t) < c$ for any $t > t_*$ which is the contrapositive of the claimed implication.
Using the definition of $f$ and $t_*$ we have,
\begin{align}
    f(t_*) \leq c &\iff \ln\left(\frac{\ln(2t_*)}{\omega} \right) \leq t_* c\\
    &\iff t_* \leq \frac{1}{2(c\omega)^2}\\
    &\iff \ln\left(\frac{2\ln(1/(c\omega))}{\omega} \right) \leq \frac{1}{2c\omega^2}
\end{align}
The last inequality is clearly true since $\ln(x) \leq \frac{x}{2}$ on $(0, \infty)$ and this proves our claim.
\end{proof}
\begin{lem}\label{lem:second_bandit_inequality}
For $t\geq 1$, $s \geq e$, $c \in (0, 1]$, $0 < \omega \leq \delta \leq e^{-e}$, we have,
\begin{align}
    \frac{1}{t}\ln\left(\frac{\ln(2t)}{\omega}\right) \geq \frac{c}{s}\ln\left(\frac{\ln (s)}{\delta}\right) \implies t \leq \frac{s}{c}\frac{\ln(\nicefrac{2}{\omega}) + \ln \ln (\nicefrac{1}{c\omega})}{\ln(\nicefrac{1}{\delta})}.
\end{align}
\end{lem}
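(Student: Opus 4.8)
The plan is to mimic the strategy of the proof of Lemma \ref{lem:first_bandit_inequality}: introduce the function $f(t) = \frac{1}{t}\ln\!\big(\frac{\ln(2t)}{\omega}\big)$, which is decreasing on $[1,\infty)$, set $t_* = \frac{s}{c}\,\frac{\ln(\nicefrac{2}{\omega}) + \ln\ln(\nicefrac{1}{c\omega})}{\ln(\nicefrac{1}{\delta})}$, and show that $f(t_*) \le \frac{c}{s}\ln\!\big(\frac{\ln(s)}{\delta}\big)$. Since $f$ is decreasing, this gives $f(t) < \frac{c}{s}\ln\!\big(\frac{\ln s}{\delta}\big)$ for every $t > t_*$, which is the contrapositive of the claimed implication. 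So the whole lemma reduces to the single scalar inequality $f(t_*) \le \frac{c}{s}\ln\!\big(\frac{\ln s}{\delta}\big)$.

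First I would unwind that inequality. Writing $r \coloneqq \frac{c}{s}\ln\!\big(\frac{\ln s}{\delta}\big)$, we have $f(t_*) \le r \iff \ln\!\big(\frac{\ln(2t_*)}{\omega}\big) \le r\, t_* = \ln(\nicefrac{2}{\omega}) + \ln\ln(\nicefrac{1}{c\omega})$, i.e. $\ln(2t_*) \le 2\ln(\nicefrac{1}{c\omega})$, i.e. $t_* \le \frac{1}{2(c\omega)^2}$. So it suffices to prove the clean bound $t_* \le \frac{1}{2(c\omega)^2}$. Substituting the definition of $t_*$, this is
\begin{align}
\frac{s}{c}\cdot\frac{\ln(\nicefrac{2}{\omega}) + \ln\ln(\nicefrac{1}{c\omega})}{\ln(\nicefrac{1}{\delta})} \le \frac{1}{2(c\omega)^2},
\end{align}
which I would rearrange into $s \le \frac{\ln(\nicefrac{1}{\delta})}{2c\omega^2\big(\ln(\nicefrac{2}{\omega})+\ln\ln(\nicefrac{1}{c\omega})\big)}$. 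Here the hypotheses are used: $c \le 1$ and $\omega \le \delta \le e^{-e}$ make $\ln(\nicefrac{1}{c\omega}) \ge \ln(\nicefrac{1}{\omega}) \ge \ln(\nicefrac{1}{\delta}) \ge e$, so all the iterated logarithms are well-defined and at least $1$; and $s \ge e$ keeps $\ln s$ positive so that $r > 0$.

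The main obstacle is that the inequality to be established is \emph{not} universal in $s$ — it only needs to hold when the hypothesis $f(t) \ge \frac{c}{s}\ln(\frac{\ln s}{\delta})$ is non-vacuous, i.e. for $s$ not too large relative to $\omega$ and $c$; for very large $s$ the right side $r$ becomes tiny and $t_*$ can exceed $\frac{1}{2(c\omega)^2}$. So I expect the actual argument to be: \emph{assuming} there exists $t \ge 1$ with $f(t) \ge r$, deduce (from $f(1) = \ln(\frac{\ln 2}{\omega}) \ge f(t) \ge r$, using monotonicity and $t \ge 1$) an upper bound on $s$ of the form $s \le \frac{c^{-1}\ln(\nicefrac{1}{\delta})\ln\ln(\nicefrac{2}{\omega})}{\text{something}}$, and then plug that bound on $s$ back into $t_*$ to certify $t_* \le \frac{1}{2(c\omega)^2}$. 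Concretely: from $r \le \ln(\frac{\ln 2}{\omega})$ one gets $\ln s \le \delta\exp\!\big(\frac{s}{c}\ln\frac{\ln 2}{\omega}\big)$, but it is cleaner to instead observe that the hypothesis at the \emph{specific} value $t = t_*$ is what we are negating, and handle the range $1 \le t \le t_*$ directly. I would therefore split into the case $t_* \le \frac{1}{2(c\omega)^2}$ (done immediately by the computation above, using the hypothesis-derived bound on $s$) and note there is nothing else to check because $f$ decreasing then closes the argument. The one genuinely delicate estimate is controlling $\ln(\nicefrac{2}{\omega}) + \ln\ln(\nicefrac{1}{c\omega})$ from below by a constant multiple of $\ln(\nicefrac{1}{\delta})$ times the slack needed — this is where $\omega \le \delta \le e^{-e}$ does the work, via $\ln\ln(\nicefrac{1}{c\omega}) \ge 1$ and $\ln(\nicefrac{2}{\omega}) \ge \ln(\nicefrac{1}{\delta})$ — and everything else is the routine inequality $\ln x \le x/2$ already invoked in Lemma \ref{lem:first_bandit_inequality}.
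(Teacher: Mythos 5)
There is a genuine gap, and it sits exactly where you flagged it. Your reduction of $f(t_*)\le r$ to the sufficient condition $t_*\le \frac{1}{2(c\omega)^2}$ fails for large $s$, since $t_*=\frac{s}{c}\cdot\frac{\ln(\nicefrac2\omega)+\ln\ln(\nicefrac{1}{c\omega})}{\ln(\nicefrac1\delta)}$ grows linearly in $s$ while the right-hand side is fixed. The repair you sketch --- extracting an \emph{upper} bound on $s$ from non-vacuousness of the hypothesis --- cannot work: the threshold $r=\frac{c}{s}\ln\big(\frac{\ln s}{\delta}\big)$ is \emph{decreasing} in $s$ on $[e,\infty)$ (its derivative has the sign of $\frac{1}{\ln s}-\ln\ln s-\ln(\nicefrac1\delta)<0$ because $\ln(\nicefrac1\delta)\ge e$), so the hypothesis $\frac1t\ln\big(\frac{\ln(2t)}{\omega}\big)\ge r$ only gets \emph{weaker} as $s$ grows. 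The condition $f(1)\ge r$ gives a constraint of the form $s\ge c\ln(\ln s/\delta)/\ln(\ln 2/\omega)$, a lower bound on $s$, not an upper one. Hence there is no large-$s$ regime you can discard, and your argument has no conclusion precisely where the statement is hardest.

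The paper's proof takes a different and shorter route: it applies \Cref{lem:first_bandit_inequality} with the constant $c$ there replaced by $r=\frac{c}{s}\ln\big(\frac{\ln s}{\delta}\big)$, which yields the $s$-dependent bound $\frac{ct}{s}\le\frac{\ln(\nicefrac2\omega)+\ln\left[\ln s+\ln(\nicefrac{1}{c\omega})-\ln\ln(\ln s/\delta)\right]}{\ln(\nicefrac1\delta)+\ln\ln s}$. It then uses $\ln\ln(\ln s/\delta)\ge1$ together with $(\ln s-1)\big(\ln(\nicefrac{1}{c\omega})-1\big)\ge0$ to replace the inner sum by the product $\ln s\cdot\ln(\nicefrac{1}{c\omega})$, giving $\frac{ct}{s}\le\frac{\ln(\nicefrac2\omega)+\ln\ln(\nicefrac{1}{c\omega})+\ln\ln s}{\ln(\nicefrac1\delta)+\ln\ln s}$, and concludes with the mediant inequality $\frac{x+a}{x+b}\le\frac{a}{b}$ for $a\ge b>0$, $x>0$. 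The $\ln\ln s$ term appearing in both numerator and denominator is exactly what absorbs the growth in $s$ that defeats your reduction; to salvage your ``evaluate $f$ at $t_*$'' strategy you would have to keep the factor $1+\frac{\ln\ln s}{\ln(\nicefrac1\delta)}$ in $rt_*$ and prove a matching bound $\ln\ln(2t_*)\lesssim\ln\ln(\nicefrac{1}{c\omega})+\ln\ln s$, which amounts to redoing the same log-algebra without the benefit of citing Lemma \ref{lem:first_bandit_inequality}.
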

\begin{proof} \Cref{lem:first_bandit_inequality} immediately implies that
\begin{align}
    \frac{c t}{s}  &\leq \frac{\ln(\nicefrac{2}{\omega}) + \ln\left[\ln(s) + \ln(\nicefrac{1}{c\omega}) - \ln\ln(\nicefrac{\ln(s)}{\delta})\right]}{\ln(\nicefrac{1}{\delta}) + \ln \ln (s)}.
\end{align}
Using the fact that $\ln\ln(\nicefrac{\ln(s)}{\delta}) \geq 1$ and the following fact
\begin{align}
    s \geq e & \implies \ln s -1 \geq 0 \\
    & \implies \ln s -1 \le  e(\ln s-1)\\
    &\implies \ln s -1 \le (\ln s-1)\ln(\nicefrac{1}{c\omega}) \\
    &\implies \ln s + \ln(\nicefrac{1}{c\omega})
    -1 \le \ln s\ln(\nicefrac{1}{c\omega})\\
    &\implies \ln s + \ln(\nicefrac{1}{c\omega})
    -\ln\ln(\nicefrac{\ln(s)}{\delta}) \le \ln s\ln(\nicefrac{1}{c\omega}),
\end{align}
we have
\begin{align}
    \frac{ct}{s} \leq \frac{\ln(\nicefrac{2}{\omega}) + \ln\ln(\nicefrac{1}{c\omega}) + \ln\ln s}{\ln(\nicefrac{1}{\delta}) + \ln \ln s}
\end{align}
We conclude by applying the inequality $a\geq b, x > 0 \implies \frac{x+a}{x+b} \leq \nicefrac{a}{b}$ with $a = \ln(\nicefrac{2}{\omega}) + \ln\ln(\nicefrac{1}{c\omega})$, $b = \ln(\nicefrac{1}{\delta})$ and $x = \ln \ln s$.
\end{proof}
\subsubsection{Main lemmas}
Without loss of generality, we assume hereafter that the arms' parameters are ranked in decreasing order : $\theta_1 \geq \theta_2 \geq \dots, \theta_K$.
We define the function\begin{align}
    U(n, \omega) = \frac{3.4\sigma}{\alpha}\sqrt{\frac1n \ln \left(\frac{\ln (2n)}{\omega}\right)}, \quad n \in \mathbb{N}^*,\ \omega \in (0, 1),
\end{align}
and the events
\begin{align}
    \mathcal{E}_k(\omega) = \{ \forall n \geq n_0(\omega) \text{ it holds that } \lvert \hat{\theta}_{k, n} - \theta_k \rvert \leq U(n, \omega) \}.
\end{align}
Note that, according to \Cref{MainThm1d},  $\mathbb{P}\big(\mathcal{E}_k(\omega)^\complement \big) = {O}(\omega)$.
The proof of \Cref{thm:UBforBandits} is essentially the combination of 
two lemmas. The first lemma states that with high probability the number 
of times each sub-optimal arm is pulled is not too large. The second 
lemma shows that the algorithm indeed stops at some time and returns 
the best arm with high probability.
\begin{lem}\label{lem:bandit_1}
Let $\beta\in (0,\frac{2}{\sqrt{2}-1})$, $\delta \in (0, e^{-e})$ and $\varkappa = (2+\beta)^2(\nicefrac{3.4 \sigma}{\alpha})^2$. Then we have, with probability at least $1 - 11 \delta$ and any integer $n \geq 1$,
\begin{align}
    \sum_{k=2}^K T_k(n) \leq n_0(\delta) (K-1) + 104\varkappa \mathbf{H}_1 \ln(\nicefrac{1}{\delta}) + \sum_{k=2}^K \varkappa \frac{\ln(2\max\{1, \ln(\varkappa/(\Delta_k^2\delta))\})}{\Delta_k^2}
\end{align}
\end{lem}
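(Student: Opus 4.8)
The strategy is to mimic the argument of \cite[Theorem~2]{jamieson2014lil}, with the empirical mean replaced by the general $M$-estimator and the univariate LIL of \Cref{MainThm1d} (equivalently, the function $U$ and the events $\mathcal{E}_k$) playing the role of the scalar deviation bound. The first step is to reduce everything to a single favourable event. By construction, on $\mathcal{E}_k(\delta)$ one has $\lvert\hat\theta_{k,n}-\theta_k\rvert\le U(n,\delta)$ for all $n\ge n_0(\delta)$, and \Cref{MainThm1d} (after identifying $U(\cdot,\delta)$ with $t^{\rm LIL}_{\cdot,\delta'}$ for a suitable $\delta'=O(\delta)$) gives $\mathbb{P}(\mathcal{E}_k(\delta)^\complement)=O(\delta)$. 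Combining these bounds over the arms and using the hypothesis $\delta\le e^{-e}$ to control constants yields an event $\mathcal{E}$ of probability at least $1-11\delta$ on which all the per-arm confidence bounds hold simultaneously; everything below is carried out deterministically on $\mathcal{E}$.

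Write $\psi(m)=\sqrt{\bigl(\ln\ln 2m+0.72\ln(\nicefrac{10.4}{\delta})\bigr)/m}$ for the inflation used by \Cref{algo}. The key algorithmic observation is that, with $\gamma=3.4(1+\beta)\sigma/\alpha$, the index of the best arm never falls below $\theta_1$: on $\mathcal{E}$ one has $\hat\theta_{1,T_1(n)}+\gamma\psi(T_1(n))\ge\theta_1-U(T_1(n),\delta)+\gamma\psi(T_1(n))\ge\theta_1$, because the choice of $\gamma$ makes the inflation term dominate the confidence radius $U$ on the relevant range of sample sizes. Hence, whenever \Cref{algo} selects a sub-optimal arm $k$ at round $n$, its index is at least that of arm~$1$, i.e.\ at least $\theta_1=\theta_k+\Delta_k$; combined with the upper bound $\hat\theta_{k,T_k(n)}\le\theta_k+U(T_k(n),\delta)$ valid on $\mathcal{E}$, this forces
\[
U\bigl(T_k(n),\delta\bigr)+\gamma\,\psi\bigl(T_k(n)\bigr)\ \ge\ \Delta_k .
\]
Bounding the left-hand side by $\sqrt{\varkappa}\,\sqrt{\tfrac1{T_k(n)}\ln\!\bigl(\tfrac{\ln(2T_k(n))}{\delta}\bigr)}$ (up to a numerical factor that the constant $104$ will absorb) turns this into an inequality of the exact shape $\tfrac1{T_k(n)}\ln\!\bigl(\tfrac{\ln(2T_k(n))}{\delta}\bigr)\ge c_k$ with $c_k$ proportional to $\Delta_k^2/\varkappa$.

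At this point I would invoke \Cref{lem:first_bandit_inequality} with $c=c_k$ and $\omega=\delta$ (the degenerate range $\ln(\varkappa/(\Delta_k^2\delta))<1$ being handled by the $\max\{1,\cdot\}$ truncation) to deduce that, on $\mathcal{E}$, once $T_k(n)$ exceeds $n_0(\delta)+\varkappa\Delta_k^{-2}\bigl(104\ln(\nicefrac1\delta)+\ln(2\max\{1,\ln(\varkappa/(\Delta_k^2\delta))\})\bigr)$ the arm $k$ can no longer be played; the additive $n_0(\delta)$ accounts for the initialization phase (each arm is pulled $n_0(\delta)$ times at the start), and the inflation from the bare constant $1$ that the inversion lemma would give to $104$ absorbs the slack introduced when bundling $U$ and $\gamma\psi$ into a single radius. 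Since $T_k(n)$ is non-decreasing in $n$, this simultaneously bounds $T_k(n)$ for every $n$; summing over $k=2,\dots,K$ and recognizing $\sum_{k\ge 2}\varkappa\Delta_k^{-2}=\varkappa\mathbf{H}_1$ yields the claimed inequality.

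The main obstacle is the quantitative bookkeeping. On the deterministic side, one must combine the two radii $U(m,\delta)$ and $\gamma\psi(m)$ into a single expression of precisely the form handled by \Cref{lem:first_bandit_inequality} while keeping all constants explicit, and chase them through the inversion to recover the stated constants $104$ and $\varkappa$. On the probabilistic side --- and this is the more delicate point --- one must verify that the accumulated failure probability collapses to the advertised $11\delta$; this is where the assumption $\delta\le e^{-e}$ and the specific value of $\gamma$ (hence of $\varkappa$) are used, so that the per-arm error terms are summable over the $K$ arms without introducing any dependence on $K$ in the final bound. The purely combinatorial part --- that playing arm $k$ forces its index above $\theta_1$, hence the displayed inequality --- is essentially that of \cite{jamieson2014lil} and is comparatively routine.
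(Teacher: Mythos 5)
There is a genuine gap, and it sits exactly at the point you flag as ``the more delicate point'' without resolving it. Your plan is to build a single event $\mathcal{E}$ of probability $1-11\delta$ on which \emph{every} arm's confidence bound holds simultaneously, and then argue deterministically. But each event $\mathcal{E}_k(\delta)$ fails with probability up to $10.4\,\delta$, so intersecting them over all $K$ arms via a union bound costs $10.4\,K\delta$, which exceeds $11\delta$ as soon as $K\ge 2$; and shrinking the per-arm level to $\delta/K$ to fix this would inject a $\ln K$ into each $\tau_k$, which is not present in the statement. No choice of $\gamma$ or use of $\delta\le e^{-e}$ repairs this: a deterministic argument on the intersection of fixed-level events cannot produce a failure probability independent of $K$.

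The paper's proof avoids this by conditioning only on $\mathcal{E}_1(\delta)$ (the best arm, cost $10.4\,\delta$) and treating the confidence level of each suboptimal arm as a \emph{random variable} $\Omega_k=\max\{\omega:\mathcal{E}_k(\omega)\text{ holds}\}$, for which \Cref{MainThm1d} gives the tail bound $\mathbb{P}(\Omega_k<\omega)\le 10.4\,\omega$. Step~1 is then carried out for an arbitrary $\omega$, yielding $T_k(n)\mathds{1}\{\mathcal{E}_1(\delta)\}\le n_0(\delta)+\tau_k+\frac{2\varkappa}{\Delta_k^2}\ln(1/\Omega_k)$, and the random overshoots $Z_k=\frac{2\varkappa}{\Delta_k^2}\ln(1/\Omega_k)$ are independent with sub-exponential tails, so a Bernstein-type inequality (\Cref{lem:sub_exponential_bound}) controls $\sum_{k\ge2}Z_k$ at level $\delta$ with deviation $4c\lVert a\rVert_1\ln(1/\delta)$, $a_k=2\varkappa/\Delta_k^2$. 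This is precisely what produces the separate $104\,\varkappa\,\mathbf{H}_1\ln(1/\delta)$ term (with $104\approx 10\times 10.4$) and the total failure probability $10.4\,\delta+\delta\le 11\delta$ with no $K$-dependence. Your outline reproduces Step~1 (the index comparison and the inversion via \Cref{lem:first_bandit_inequality}) correctly, but omits this randomized-confidence-level device entirely, and without it neither the constant $104\,\varkappa\,\mathbf{H}_1\ln(1/\delta)$ nor the probability $1-11\delta$ can be recovered.
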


\begin{proof} The proof is carried out in two steps. In the first step, we upper bound the number of pulls on events for which the rewards are well behaved. In the second step we resort on standard concentration arguments to show that the events considered in the first step happen with high probability.

\textbf{Step 1.}
Let $k > 1$. Assuming that $\mathcal{E}_1(\delta)$ and $\mathcal{E}_k(\omega)$ hold true and $I_n = k$, one has, for $n \geq K n_0(\delta)$ (i.e. after warm-up stage),
\begin{align}
    \theta_k + U(T_k(n), \omega) + (1+\beta)U(T_k(n), \delta) &\geq \hat{\theta}_{k, T_k(n)} + (1+\beta) U(T_k(n), \delta) &\text{($\mathcal{E}_k(\omega)$ holds)}\\
    &\geq \hat{\theta}_{1, T_1(n)} + (1+\beta)U(T_1(n), \delta) &\text{($I_n = k$)}\\
    &\geq \theta_1. &\text{($\mathcal{E}_1(\delta)$ holds)}
\end{align} 
Since the function $U$ is decreasing in its second argument, we have
\begin{align}
    (2+\beta)U(T_k(n), \min(\omega, \delta)) \geq \Delta_k \coloneqq \theta_1 - \theta_k.
\end{align}
Setting $\varkappa = (2+\beta)^2(\nicefrac{3.4 \sigma}{\alpha})^2$ and using Lemma~\ref{lem:first_bandit_inequality} with $c = \frac{\Delta_k^2}{\varkappa}$, one obtains that, for $n \geq K n_0(\delta)$, if $\mathcal{E}_1(\delta)$ and $\mathcal{E}_i(\omega)$ hold true and $I_n = k$ then
\begin{align}
    T_k(n) &\leq \frac{\varkappa}{\Delta_k^2}\ln\left(\frac{2\ln(\nicefrac{\varkappa}{\left(\Delta_k^2 \min(\omega, \delta)\right)})}{\min(\omega, \delta)} \right)\\
    &\leq \tau_k + \frac{\varkappa}{\Delta_k^2} \ln\left(\frac{\ln(\nicefrac{e}{\omega})}{\omega} \right)\\
    &\leq \tau_k + \frac{2\varkappa}{\Delta_k^2}\ln\left({1}/{\omega}\right).
\end{align}
with $\tau_k = \frac{\varkappa}{\Delta_k^2} \ln\left( ({2/\delta})\max\{1, \ln(\nicefrac{\varkappa}{\Delta_k^2\delta}) \} \right)$.
Since $T_k(n)$ increases only when $k$ is pulled, the above argument shows that the following inequality is true for any time $n \geq 1$ :
\begin{align}\label{eq:Tkn}
    T_k(n)\mathds{1}\{\mathcal{E}_1(\delta) \cap \mathcal{E}_k(\omega)\} \leq n_0(\delta) + \tau_k + \frac{2\varkappa}{\Delta_k^2}\ln\left({1}/{\omega}\right).
\end{align}
\begin{remark}
Indeed, if arm $k$ is pulled at time $n \geq Kn_0(\delta)$ then 
\begin{align}
    T_k(n+1) - 1 = T_k(n) \leq \tau_k + \frac{2\varkappa}{\Delta_k^2}\ln({1}/{\omega}),
\end{align}
and if arm $k$ is pulled before time $Kn_0(\delta)$, i.e. during the warm-up stage, then
\begin{align}
    T_k(n) \leq n_0(\delta) \leq n_0(\delta) + \tau_k + \frac{2\varkappa}{\Delta_k^2}\ln\left({1}/{\omega}\right).
\end{align}
\end{remark}

\textbf{Step 2.} We define the random variable $\Omega_k \coloneqq \max\{\omega \in [0, 1] : \mathcal{E}_k(\omega) \text{ holds true}\}$. \Cref{MainThm1d} guarantees that it is well defined and that $\mathbb{P}(\Omega_k < \omega) \leq c \omega$ with $c=10.4$\footnote{\Cref{MainThm1d} gives a slightly tighter bound but we chose to loosen it for simplicity of the proof.}. Furthermore, one can rewrite \cref{eq:Tkn} as 
\begin{align}
    T_k(n) \mathds{1}\{\mathcal{E}_1(\delta)\} \leq n_0(\delta) + \tau_k + \frac{2\varkappa}{\Delta_k^2}\ln\left({1}/{\Omega_k}\right)
\end{align}
Therefore, for any $x>0$,
\begin{align}
    \mathbb{P}\left( \sum_{k=2}^K T_k(n) > x + \sum_{k=2}^K (\tau_k + n_0(\delta)) \right) &\leq \mathbb{P}\left(\mathcal{E}_1(\delta)^\complement \right) \\ &\quad+ \mathbb{P}\left(\left\{\sum_{k=2}^K T_k(n) > x + \sum_{k=2}^K (\tau_k + n_0(\delta))\right\} \bigcap  \mathcal{E}_1(\delta) \right)\\ &\leq c\delta +\mathbb{P}\left( \sum_{k=2}^K \frac{2\varkappa}{\Delta_k^2}\ln\left({1}/{\Omega_k}\right) > x  \right)
\end{align}
Define the random variables $Z_k = \frac{2\varkappa}{\Delta_k^2} \ln\left({1}/{\Omega_k}\right)$, for $k \in [K]\backslash \{1\}$. Observe that these are independent non-negative random variables and since $\mathbb{P}(\Omega_k < \omega) \leq c \omega$, it holds that $\mathbb{P}(Z_k > x) \leq c\exp(-x/a_k)$ with $a_k = 2\varkappa/\Delta_k^2$. Observing that
\begin{align}
    \mathbb{E}Z_k = \int_{0}^{+\infty} \mathbb{P}\left(Z_k > x\right)dx \leq c\int_{0}^{+\infty} e^{-x/a_k} = c a_k
\end{align}
and applying a basic concentration inequality for the sum of sub-exponential random variables (see \Cref{lem:sub_exponential_bound}), we have,
\begin{align}
    \mathbb{P}\left(\sum_{k=2}^K(Z_k-ca_k)>z\right) &\leq \mathbb{P}\left(\sum_{k=2}^K(Z_k-\mathbb{E}Z_k)>z\right)\\
    &\leq \exp\left(-\min\left\{\frac{z^2}{8 c \lVert a \rVert_2^2}, \frac{z}{4\lVert a \rVert_\infty}\right\} \right)\\
    &\leq \exp\left(-\min\left\{\frac{z^2}{8c\lVert a \rVert_1^2}, \frac{z}{4\lVert a \rVert_1}\right\} \right).
\end{align}
Putting everything together with $z = 4 c \lVert a \rVert_1 \ln(1/\delta)$, $x = z + c\lVert a \rVert_1$ one obtains, for $n \geq 1$
\begin{align}
    \mathbb{P}\left(\sum_{k=2}^K T_k(n) > \sum_{k=2}^K\left(\frac{10\varkappa c\ln(1/\delta)}{\Delta_k^2} + \tau_k + n_0(\delta) \right) \right) \leq 11\delta
\end{align}
and the claim of the lemma follows.
\end{proof}

\begin{lem}\label{lem:bandit_2}
Let $\beta \in (0, (\nicefrac{2}{\sqrt{2}-1})), \delta \in (0, 0.01)$ and $c_\beta = \big(\frac{2 + \beta}{\beta}\big)^2$. If
\begin{align}
    \lambda \geq \frac{\varrho}{1- 10.4\delta - {\textstyle\sqrt{\delta^{\nicefrac14} \ln(1/\delta)}}},\quad \text{ with }
    \quad \varrho = c_\beta \frac{\ln\left( 2\ln(\nicefrac{c_\beta}{2\delta})/ \delta\right)}{\ln(\nicefrac{1}{\delta})},
\end{align}
then, for all $k=2, \dots, K$ and $n=1, 2, \dots$ we have $T_k(n) < n_0(\delta) + \lambda \sum_{\ell \neq k} T_\ell(n)$ with probability at least $1 - 6 \sqrt{\delta}$.
\end{lem}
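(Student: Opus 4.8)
The plan is to transpose the argument of \cite[Theorem~2]{jamieson2014lil} to the $M$-estimator confidence radius $U(\cdot,\cdot)$ and to the random confidence levels $\Omega_k=\max\{\omega\in[0,1]:\mathcal E_k(\omega)\text{ holds}\}$ already introduced in the proof of \Cref{lem:bandit_1}. Since $k\ge 2$ entails $1\ne k$, we have $\sum_{\ell\ne k}T_\ell(n)\ge T_1(n)$, so it is enough to establish $T_k(n)<n_0(\delta)+\lambda T_1(n)$ for every suboptimal $k$ and every $n$. Because $T_k(\cdot)$ increases only at the rounds $n$ with $I_n=k$ while $T_1(\cdot)$ is non-decreasing, it suffices to verify this at those rounds (the warm-up rounds being handled directly). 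So fix $k\ge 2$ and a round $n\ge Kn_0(\delta)$ with $I_n=k$.

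Arguing exactly as in Step~1 of the proof of \Cref{lem:bandit_1}, but now keeping the arm-$1$ term on the right: on $\mathcal E_1(\delta)$ the index of arm~$1$ at round $n$ is at least $\theta_1+\beta\,U(T_1(n),\delta)$, while by definition of $\Omega_k$ the index of arm~$k$ is at most $\theta_k+U(T_k(n),\Omega_k)+(1+\beta)\,U(T_k(n),\delta)$ (the confidence bonus of \Cref{algo} being $(1+\beta)\,U(\cdot,\delta)$, as used there). Since $I_n=k$ forces arm~$k$'s index to dominate and $\theta_1-\theta_k=\Delta_k\ge 0$, this yields
\[
\beta\,U(T_1(n),\delta)\ \le\ U(T_k(n),\Omega_k)+(1+\beta)\,U(T_k(n),\delta)\ \le\ (2+\beta)\,U\!\big(T_k(n),\min(\Omega_k,\delta)\big).
\]
Squaring and recalling $U(m,\omega)^2=(\nicefrac{3.4\sigma}{\alpha})^2\,m^{-1}\ln(\nicefrac{\ln(2m)}{\omega})$, this is exactly the hypothesis of \Cref{lem:second_bandit_inequality} with $t=T_k(n)$, $s=2T_1(n)$, $\omega=\min(\Omega_k,\delta)$ and $c=c'\coloneqq\nicefrac{2\beta^2}{(2+\beta)^2}$: the restriction $\beta<\nicefrac{2}{\sqrt2-1}$ is precisely what makes $c'\le 1$, $s\ge e$ holds after the warm-up (for $n_0$ large enough), and $\omega\le\delta\le e^{-e}$ since $\delta<0.01$. \Cref{lem:second_bandit_inequality}, together with $\nicefrac{2}{c'}=c_\beta$, then gives
\[
T_k(n)\ \le\ c_\beta\,T_1(n)\;\frac{\ln\!\big(\tfrac{2}{\min(\Omega_k,\delta)}\big)+\ln\ln\!\big(\tfrac{1}{c'\min(\Omega_k,\delta)}\big)}{\ln(\nicefrac{1}{\delta})}.
\]

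When $\Omega_k\ge\delta$, since $\tfrac{1}{c'\delta}=\tfrac{c_\beta}{2\delta}$, the right-hand side is exactly $\varrho\,T_1(n)<\lambda T_1(n)$, already stronger than required. In general one works on $\mathcal G\coloneqq\mathcal E_1(\delta)\cap\bigcap_{k\ge 2}\{\Omega_k\ge\omega_k\}$ for levels $\omega_k$ chosen slightly below $\delta$: by \Cref{MainThm1d} one has $\mathbb P(\Omega_k<\omega)\le 10.4\,\omega$, so the $\omega_k$ can be taken so that $\mathbb P(\mathcal G^\complement)\le 6\sqrt\delta$ while still being large enough that the displayed fraction is at most $\varrho/\big(c_\beta(1-10.4\delta-\sqrt{\delta^{1/4}\ln(\nicefrac{1}{\delta})})\big)$; the correction factor in the hypothesis on $\lambda$ is precisely the price of this slack. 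On $\mathcal G$ we therefore obtain $T_k(n)\le\lambda T_1(n)<n_0(\delta)+\lambda\sum_{\ell\ne k}T_\ell(n)$ at each suboptimal pull round, hence — by the monotonicity observation above, up to the harmless off-by-one at such a round — for every $n$.

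I expect the main obstacle to be exactly this calibration: choosing the $\omega_k$ so that all the $\Omega_k$ are controlled from below with total failure probability only $6\sqrt\delta$, yet the induced degradation of $\varrho$ never exceeds the prescribed factor $1/(1-10.4\delta-\sqrt{\delta^{1/4}\ln(\nicefrac{1}{\delta})})$. Once this balance is struck, keeping track of the lower-order $\ln 2$ and $\ln\ln$ contributions to the fraction, and disposing of the warm-up rounds, is routine.
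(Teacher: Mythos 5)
There is a genuine gap, and it sits exactly where you flag ``the main obstacle'': the calibration of the levels $\omega_k$ cannot be carried out within your reduction. By comparing each suboptimal arm only with arm $1$, you are forced to work on the event $\mathcal{E}_1(\delta)\cap\bigcap_{k\ge 2}\{\Omega_k\ge\omega_k\}$ and to union-bound over the $K-1$ arms, paying $\sum_{k\ge 2}10.4\,\omega_k$. For the resulting bound $c_\beta\,[\ln(2/\omega_k)+\ln\ln(1/(c'\omega_k))]/\ln(1/\delta)$ to stay below $\varrho/(1-10.4\delta-\sqrt{\delta^{1/4}\ln(1/\delta)})$, each $\omega_k$ can be at most a factor $\delta^{O(\sqrt{\delta^{1/4}\ln(1/\delta)})}$ below $\delta$, so the union bound costs roughly $10.4(K-1)\delta^{1+o(1)}$, which exceeds $6\sqrt{\delta}$ once $K\gtrsim\delta^{-1/2}$. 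Since the lemma must hold for arbitrary $K$ with failure probability $6\sqrt{\delta}$ independent of $K$, the two requirements are incompatible: there is no admissible choice of the $\omega_k$, and the step you defer as ``routine'' is where the proof fails.

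The paper avoids this by a different decomposition, inherited from \cite[Theorem 2]{jamieson2014lil}. For a pulled suboptimal arm $k$ it compares $T_k(n)$ with \emph{every} better arm $\ell<k$, obtaining $T_k(n)\le\varrho\,T_\ell(n)$ on $\mathcal{E}_k(\delta^{k-1})\cap\mathcal{E}_\ell(\delta)$, and then only requires that a fraction at least $1-\alpha$ of the events $\mathcal{E}_1(\delta),\dots,\mathcal{E}_{k-1}(\delta)$ hold, which already yields $(1-\alpha)(T_k(n)-n_0(\delta))\le\varrho\sum_{\ell\ne k}T_\ell(n)$. The failure probability of this fractional event is controlled by Hoeffding's inequality as $\exp(-2(k-1)(\alpha-c\delta)^2)$ (combined with the $\min$ against $c(k-1)\delta$), and the event $\mathcal{E}_k(\delta^{k-1})$ for the pulled arm itself costs only $c\delta^{k-1}$; both sums over $k$ converge geometrically, giving a total of at most $6\sqrt{\delta}$ \emph{uniformly in} $K$ with the choice $\alpha=c\delta+\sqrt{\delta^{1/4}\ln(1/\delta)}$. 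In particular the denominator $1-10.4\delta-\sqrt{\delta^{1/4}\ln(1/\delta)}$ in the hypothesis on $\lambda$ is the factor $1-\alpha$ coming from this averaging argument, not a slack in the choice of the $\omega_k$ as your sketch supposes. Your first half (the index comparison, the reduction to \Cref{lem:second_bandit_inequality} with $c=2c_\beta^{-1}$, and the monotonicity/warm-up bookkeeping) matches the paper; the missing idea is the averaging over the better arms together with the Hoeffding step.
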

\begin{proof}
Let $k > \ell$. Assuming that $\mathcal{E}_k(\omega)$ and $\mathcal{E}_\ell(\delta)$ hold true and that $I_n = k$,  one has, for $n \geq Kn_0(\delta)$,
\begin{align}
    \theta_k + U(T_k(n), \omega) + (1+\beta)U(T_k(n), \delta) &\geq \hat{\theta}_{k, T_k(n)} + (1+\beta)U(T_k(n), \delta)\\
    &\geq \hat{\theta}_{\ell, T_\ell(n)} + (1+\beta)U(T_\ell(n), \delta)\\
    &\geq \theta_\ell + \beta U(T_\ell(n), \delta)
\end{align}
This implies $(2+\beta)U(T_k(n), \min(\omega, \delta)) \geq \beta U(T_\ell(n), \delta)$. Applying \Cref{lem:second_bandit_inequality} with $c=2c_\beta^{-1}$ one obtains that if $\mathcal{E}_k(\omega)$ and $\mathcal{E}_\ell(\delta)$ hold true and $I_n = k$ then
\begin{align}\label{eq:(7)}
    T_k(n) \leq c_\beta \frac{\ln\left( 2\ln(\nicefrac{c_\beta}{2\min(\omega, \delta)}) /\min(\omega, \delta)\right)}{\ln(\nicefrac{1}{\delta})} T_\ell(n).
\end{align}
Since $T_k(n)$ only increases when $k$ is played, then, for all $n\geq 1$,
\begin{align}
    (T_k(n) - n_0(\delta))\mathds{1}\left(\mathcal{E}_k(\omega) \cap \mathcal{E}_\ell(\delta) \right) \leq c_\beta \frac{\ln\left( 2\ln(\nicefrac{c_\beta}{2\min(\omega, \delta)}) /\min(\omega, \delta)\right)}{\ln(\nicefrac{1}{\delta})} T_\ell(n).
\end{align}
Using \eqref{eq:(7)} with $\omega = \delta^{k-1}$ we see that 
\begin{align}
    \mathds{1}\{\mathcal{E}_k(\delta^{k-1})\} \frac{1}{k-1} \sum_{\ell=1}^{k-1} \mathds{1}\{\mathcal{E}_\ell(\delta)\} > 1-\alpha \implies (1-\alpha)(T_k(n) - n_0(\delta)) \leq \varrho \sum_{\ell \neq k}T_\ell(n).
\end{align}
The above implication leads to the following inequalities
\begin{align}
    &\mathbb{P}\bigg(\exists (k, n) \in \{2, \dots, K\}\times \mathbb{N}^* : (1-\alpha)(T_k(n) - n_0(\delta)) \geq \varrho \sum_{\ell \neq k} T_\ell(n) \bigg) \\ & \qquad \leq  \mathbb{P}\bigg( \exists k \in  \{2, \dots, K\} : \mathds{1}\{\mathcal{E}_k(\delta^{k-1})\} \frac{1}{k-1} \sum_{\ell=1}^{k-1} \mathds{1}\{\mathcal{E}_\ell(\delta)\} \leq 1 - \alpha  \bigg)\\
    & \qquad \leq \sum_{k=2}^K \mathbb{P}\Big(\mathcal{E}_k(\delta^{k-1})^\complement \Big) + \sum_{k=2}^K \mathbb{P}\bigg(\frac{1}{k-1}  \sum_{\ell=1}^{k-1} \mathds{1}\left(\mathcal{E}_\ell(\delta) \right) \leq 1 - c\delta - (\alpha - c\delta) \bigg).
\end{align}
Since $\mathbb{E}\mathds{1}\left(\mathcal{E}_\ell(\delta) \right) \geq 1 - c\delta$ with $c=10.4$, using \emph{separately} a union bound and Hoeffding's inequality, we get
\begin{align}
    \mathbb{P}\bigg(\frac{1}{k-1}  \sum_{\ell=1}^{k-1} \mathds{1}\left(\mathcal{E}_\ell(\delta) \right) \leq 1 - c \delta 
    - (\alpha - c\delta) \bigg) \leq \min\big(c(k-1)\delta, 
    \exp(-2(k-1)(\alpha - c\delta)^2\big).
\end{align}
Define $R=e^{-2\delta^{1/4}\ln(1/\delta)}$ and $j= \lceil \ln \{2\delta^{3/4}(1-R)\}/\ln R \rceil$. One can check that $1-R = 1-e^{2\delta^{1/4}\ln \delta} \ge 0.64\delta^{1/4}
\ln(1/\delta)$, which leads to
\begin{align}
    j -1 &\leq - \frac{\ln \{2 \delta^{3/4}(1-R)\}}
      {2\delta^{1/4}\ln(1/\delta)} \le -\frac{\ln \{1.28\delta
\ln(1/\delta)\}}
      {2\delta^{1/4}\ln(1/\delta)} \le  (1/2)\delta^{-1/4}.
\end{align}
Setting $\alpha = c\delta + \sqrt{\delta^{\nicefrac{1}{4}}\ln(1/\delta)}$, we have
\begin{align}
    & \mathbb{P}\bigg(\exists (k, n) \in \{2, \dots, K\}\times \mathbb{N}^* : \big(1- c\delta - {\textstyle\sqrt{\delta^{\nicefrac14}\ln(1/\delta)}}\big)
    \big(T_k(n) - n_0(\delta)\big) \geq \varrho \sum_{\ell \neq k} T_\ell(n) \bigg) \\
    &\leq \sum_{k=2}^K \left\{ c\delta^{k-1} + \min\big(c(k-1)\delta,e^{-2(k-1)\delta^{1/4}\ln(1/\delta)} \big)\right\}\\
    &\leq c \frac{\delta}{1-\delta} + \frac{c\delta}{2}j^2 + \frac{R^j}{1-R} \leq 10.6 \delta + 5.2 \delta j^2 + 2 \delta^{3/4} \leq 6\sqrt{\delta}.
\end{align}
This completes the proof of the lemma.
\end{proof}

\subsubsection{Putting all lemmas together}\label{ssec6.3.3}

Let $\nu$ be  the confidence level from \Cref{thm:UBforBandits} and
let $\delta$ satisfy the relation $\nu = 11\delta + 6\sqrt{\delta}$. 
Note that this implies $\sqrt{\delta} = (\sqrt{11\nu +9}-3)/11$, which
is the value of $\delta$ given in \Cref{algo}. On the one hand,
\Cref{lem:bandit_1} states that, with probability at least $1-11\delta$, 
the total number of times the suboptimal arms are sampled does not 
exceed $(K-1)n_0(\delta) + \varkappa\left(104\mathbf{H}_1 \ln(\nicefrac{1}{\delta}) + \mathbf{H}_2\right)$ where $\varkappa= ((2+\beta)3.4\sigma/\alpha)^2$. On the other hand, \Cref{lem:bandit_2} states that with probability at least $1-6\sqrt{\delta}$, if the parameter $\lambda$ is large enough, only the optimal arm will meet the stopping criterion and therefore, the number of pulls from the optimal arm is equal to $n_0(\delta) + \lambda \sum_{k \geq 2}T_k(n)$. Combining those two lemmas, we have that with probability at least $1-11\delta-6\sqrt{\delta}$, the optimal arm meets the stopping criterion and the total number of pulls does not exceed $(1+\lambda)K n_0(\delta) + (1+\lambda)\varkappa \left( 104\mathbf{H}_1 \ln(\nicefrac{1}{\delta}) + \mathbf{H}_2\right)$.

\subsection{Proof of Theorem \ref{thm:LBforBandits}}

Since $\tilde\phi$ is symmetric, the means of the two arms $\theta_1$ and $\theta_2$ coincide with the parameters of interest and so, the gap $\Delta$ coincides with the difference in means, i.e., $\Delta=|\theta_1-\theta_2|$. Therefore, finding the best arm amounts to finding the arm with the best mean and the result follows from \cite[Corollary 1]{jamieson2014lil},  which in turn is a consequence of
\cite[Theorem 1]{farrell1964asymptotic} which we recall here for completeness 

\begin{theorem} \citealp[Theorem 1]{farrell1964asymptotic}
Let $X_1, X_2,...$ be i.i.d. Gaussian random variables with unknown mean $\Delta\neq0$ and variance $1$. Consider testing whether $\Delta > 0$ or $\Delta < 0$. Let $Y \in \{-1, 1\}$ be the decision of any such test based on $T$ samples (possibly a random number) and let $\delta \in (0, 1/2)$. If $\sup_{\Delta \neq 0}\mathbb{P}\left(Y\neq sign(\Delta)\right)\leq \delta$, then 
\begin{align}
    \limsup\limits_{\Delta \xrightarrow{} 0} \frac{\mathbb{E}_\delta[T]}{\delta^{-2}\ln \ln \Delta^{-2}} \geq 2 - 4 \delta.
\end{align}
\end{theorem}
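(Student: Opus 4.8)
The statement to prove is Farrell's sequential sign-testing lower bound: any sequential test of $\Delta>0$ versus $\Delta<0$ whose error is at most $\delta$ uniformly over all $\Delta\neq0$ must, as $\Delta\to0$, use on average at least $(2-4\delta)\Delta^{-2}\ln\ln\Delta^{-2}$ samples (the $\delta^{-2}$ and $\mathbb{E}_\delta$ in the displayed statement being typos for $\Delta^{-2}$ and $\mathbb{E}_\Delta$). The plan is to argue by contradiction and to isolate the single ingredient that produces the iterated-logarithm rate, namely the law of the iterated logarithm for the underlying Gaussian random walk. The point worth stressing up front is that a naive Wald or information-theoretic comparison of the two point hypotheses $\mathcal N(\Delta,1)$ and $\mathcal N(-\Delta,1)$ only yields the weaker rate $\Delta^{-2}\ln(1/\delta)$; the whole difficulty is to exploit that a \emph{single} test must control the error uniformly over the continuum of alternatives $\Delta$.

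First I would dispose of the trivial case: if $\mathbb{E}_\Delta[T]=\infty$ along some sequence $\Delta\to0$ the claimed $\limsup$ is infinite, so I assume $N(\Delta):=\mathbb{E}_\Delta[T]<\infty$ for all small $\Delta>0$ and suppose, for contradiction, that $\limsup_{\Delta\to0}N(\Delta)/(\Delta^{-2}\ln\ln\Delta^{-2})<2-4\delta$. Writing $X_i=\Delta+\xi_i$ under $\mathbb{P}_\Delta$ with $\xi_i$ i.i.d.\ standard Gaussian, set $W_n=\sum_{i\le n}\xi_i$, so that $S_n:=\sum_{i\le n}X_i=n\Delta+W_n$. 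The error constraints $\mathbb{P}_\Delta(Y=-1)\le\delta$ and $\mathbb{P}_{-\Delta}(Y=+1)\le\delta$, together with the likelihood-ratio identity $dP_{-\Delta}/dP_\Delta|_{\mathcal F_n}=\exp(-2\Delta S_n)$, give after optional stopping (applied to the truncated martingale at $T\wedge m$, then $m\to\infty$, using finiteness of $T$ and uniform integrability)
\begin{equation}
\mathbb{E}_\Delta\big[e^{-2\Delta S_T}\,\mathbf 1(Y=+1)\big]=\mathbb{P}_{-\Delta}(Y=+1)\le\delta,\qquad \mathbb{P}_\Delta(Y=+1)\ge 1-\delta.
\end{equation}

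The heart of the argument is to convert these two inequalities into a lower bound on $T$ by invoking the LIL for $W_n$. Because the test cannot use knowledge of $\Delta$, to keep the error below $\delta$ for every small $\Delta$ at once the accumulated drift $T\Delta$ must dominate not the fixed-$n$ (central-limit) size $\sqrt T$ of the noise but its \emph{running} envelope, which by the lower half of the LIL (Khintchine--Kolmogorov, recalled in \Cref{sec:1}) satisfies $\liminf_n W_n/\sqrt{2n\ln\ln n}=-1$ almost surely. Consequently, if $T$ were as small as $c\,\Delta^{-2}\ln\ln\Delta^{-2}$ with $c<2-4\delta$, then with $\mathbb{P}_\Delta$-probability bounded away from $\delta$ the walk would satisfy $W_T\le-(1-o(1))\sqrt{2T\ln\ln T}$, pushing $S_T=T\Delta+W_T$ below the sign boundary and forcing $e^{-2\Delta S_T}$ to be large on a non-negligible part of $\{Y=+1\}$; balancing envelope against drift, $T\Delta\sim\sqrt{2T\ln\ln T}$, pins the critical size at $T\approx 2\Delta^{-2}\ln\ln\Delta^{-2}$, and carrying the slack from the two $\delta$-error constraints through the computation upgrades the leading constant $2$ to $2-4\delta$. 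This contradicts the assumed bound and yields $N(\Delta)\ge(2-4\delta-o(1))\Delta^{-2}\ln\ln\Delta^{-2}$, whence the $\limsup$ is at least $2-4\delta$.

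The main obstacle is exactly this quantitative LIL step: the law of the iterated logarithm is an almost-sure \emph{asymptotic} statement, whereas what is needed is a finite-$\Delta$ lower bound on an expectation with the sharp leading constant $2$ and the exact correction $-4\delta$. Making it rigorous requires (i) a nonasymptotic two-sided control of the running extremes of $W_n$ matched to the LIL rate, which one can obtain by pairing a uniform sub-Gaussian deviation bound of the type of \Cref{thm:Howard} with the matching lower LIL rate so that the two rates coincide and fix the constant; (ii) careful handling of the overshoot of $S_T$ at the data-dependent stopping time and of the interaction between the two error events; and (iii) justifying the optional-stopping change of measure and passing cleanly to the limit $\Delta\to0$ using $\ln\ln T\sim\ln\ln\Delta^{-2}$ at the critical scale. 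These are precisely the delicate estimates carried out in \cite{farrell1964asymptotic}.
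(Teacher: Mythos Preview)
The paper does not prove this statement at all: it is \emph{recalled for completeness} from \citet{farrell1964asymptotic} in the course of proving \Cref{thm:LBforBandits}, and the paper's entire ``proof'' of the lower bound theorem is a one-line reduction to \cite[Corollary~1]{jamieson2014lil}, which is itself attributed to Farrell. There is therefore no proof in the paper to compare your attempt against.

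That said, your sketch correctly identifies the mechanism behind Farrell's result --- the iterated-logarithm rate arises because a single test must be valid uniformly in $\Delta$, which forces the accumulated signal to beat the LIL envelope of the noise rather than merely its CLT scale --- and you are honest that the passage from the almost-sure asymptotic LIL to a sharp finite-$\Delta$ expectation bound with the constant $2-4\delta$ is the real work. Where your outline is weakest is the heuristic ``with $\mathbb{P}_\Delta$-probability bounded away from $\delta$ the walk would satisfy $W_T\le-(1-o(1))\sqrt{2T\ln\ln T}$'': the LIL guarantees such excursions infinitely often, not at the particular (data-dependent) time $T$, and Farrell's actual argument proceeds differently --- essentially by showing that any uniformly $\delta$-correct sign test induces a confidence sequence for the mean, whose width is then lower-bounded via a careful subsequence/LIL construction. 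Your change-of-measure display is correct and is the standard starting point, but the subsequent ``balancing envelope against drift'' paragraph is only a plausibility argument, not a proof; the references you give at the end are doing all the work.
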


\section{Proofs of postponed lemmas}\label{sec:postponed_proofs}

\begin{myproof}[Proof of \Cref{lem:1}]
Let $k$ be a positive integer and let $n \in I_k$. We define the vectors
\begin{align}
    \bs{v}_n^* =\frac{\btheta^* - \bm{\hat{\theta}}_n}{{\bs a}^\top(\btheta^* - \bm{\hat{\theta}}_n)} \in \mathcal{V}
    \quad\text{and}\quad
    \bm{\Bar{\theta}}_n = \btheta^* - t_{k+1} \bs{v}^*_n.
\end{align}
Since the sequence $(t(n))_n$ is non-increasing, if $\mathcal{A}_n$ is realized then  $p_n=\frac{t_{k+1}}{{\bs a}^\top(\btheta^* - \bm{\hat{\theta}}_n)} \in (0, 1)$. Furthermore, since $\hat{\Phi}_n$ is a convex function (\Cref{as:convex_lipschitz_phi,as:convex_penalty}) we have,
\begin{align}
\label{eq:bar_theta}
    \inf_{w\in t_{k+1}\mathcal{V}} \hat{\Phi}_n(\btheta^*-\bs{w}) &\le \hat{\Phi}_n(\bm{\Bar{\theta}}_n)=\hat{\Phi}_n(p_n\bm{\hat{\theta}}_n+(1-p_n)\btheta^*)\\
    &\leq p_n \hat{\Phi}_n(\bm{\hat{\theta}}_n) + \left(1 - p_n\right) \hat{\Phi}_n(\btheta^*)\le  \hat{\Phi}_n(\btheta^*).
\end{align}
Therefore, on the event $\mathcal{A}_n$,
\begin{align}
     \sup_{w\in t_{k+1} \mathcal{V}} \left[\hat{\Phi}_n(\btheta^*) - \hat{\Phi}_n(\btheta^*-\bs{w})\right] \geq 0.
\end{align}
We conclude the proof by noting that the curvature of the population risk (\Cref{as:Phi_strongly_convex_everywhere}) implies that for any vector ${\bs w} \in \mathbb{R}^d$,
\begin{align}
    \mathbb{E}\left[\hat{\Phi}_n(\btheta^*) - \hat{\Phi}_n(\btheta^*-\bs{w})\right] &= \Phi_n(\btheta^*) - \Phi_n(\btheta^* - \bs w) \le  - \frac{\alpha_n \lVert {\bs w} \rVert^2}{2}.
\end{align}
\end{myproof}

\begin{myproof}[Proof of \Cref{lem:symmetrization_contraction}] A modified version\footnote{The version we use here can be found, for instance, in \citep[Eq.\ (2.3)]{lecue2014}.} of the symmetrization inequality yields
\begin{align}
\mathbb{E}\left[\sup_{w \in t \mathcal{V}} \exp\left\{ \lambda \left( S_m(w) - \alpha m \lVert w \rVert_2^2 \right) \right\} \right] \leq \mathbb{E} \left[\sup_{{\bs w} \in t\mathcal{V}}\exp\left\{2\lambda(S'_{m}({\bs w})- \alpha m\| {\bs w}\|_2^2 )\right\}\right],
\end{align}
where $S'_{m}({\bs w})$ is the symmetrized version of $S_{m}({\bs w})$, defined by
\begin{align}
    S'_{m}({\bs w}) =\sum_{i=1}^{m} \varepsilon_i \left\{ \phi(Y_i,\bX_i^\top \btheta^*) - \phi(Y_i,\bX_i^\top (\btheta^*-\bs{w}))
    \right\}.
\end{align}
We define the set
$R = \left\{ t \mathbf{X}^\top\bs{v}:
{\bs v} \in \mathcal{V} \right\}\subset \mathbb{R}^m$
and the functions $\varphi_i:\mathbb{R}\to\mathbb{R}$ by
\begin{align}
    \varphi_i : r \mapsto \left[ \phi(Y_i, \bX_i^\top \btheta^*) - \phi(Y_i, \bX_i^\top \btheta^* - r) \right]/L, \quad i=1, \dots, m.
\end{align}
These functions $\varphi_i$ are contractions (\Cref{as:convex_lipschitz_phi}) such that $\varphi_i(0)=0$. The contraction principle \cite[Theorem 2.2]{koltchinskii} gives
\begin{align}
    \mathbb{E} \left[\sup_{{\bs w} \in t\mathcal{V}}\exp\left\{2\lambda(S'_{m}({\bs w})- \alpha m\| {\bs w}\|_2^2 )\right\}\right]\le
    \mathbb{E} \left[\sup_{{\bs w} \in t\mathcal{V}}\exp\left\{2\lambda(L
     \bs{w}^\top \mathbf{X}\bs{\varepsilon}- \alpha m\| {\bs w}\|_2^2 )\right\}\right].
\end{align}
Setting $t'= (\nicefrac{2m\alpha}{L})t$ and $\lambda'=(\nicefrac{ L^2}{m\alpha})\lambda$, we arrive at
\begin{align}\label{exp:5}
    \mathbb{E} \left[\sup_{{\bs w} \in t\mathcal{V}}\exp\left\{2\lambda(S'_{m}({\bs w})- 
    \alpha m\| {\bs w}\|_2^2 )\right\}\right]\le
    \mathbb{E} \left[\sup_{w \in t'\mathcal{V}}\exp\left\{\lambda'(
     \bs{w}^\top \mathbf{X}\bs{\varepsilon}- \lVert {\bs w}\rVert_2^2/2 )\right\}\right].
\end{align}
Finally, since the positive real numbers $\lambda$ and $\lambda'$ are positively proportional, taking the infimum over all positive $\lambda$ is exactly the same as taking the infimum over all positive $\lambda'$.
\end{myproof}

\begin{lem}\label{lem:Xepsilon_exp_ineq}
Let $\mathbf{X}$ be a deterministic $d \times m$ matrix and $\bs{\varepsilon} = (\varepsilon_1, \dots, \varepsilon_m)$ a $m$-dimensional vector with i.i.d.\ Rademacher entries. As soon as $\lVert \mathbf{X} \rVert_F^2 \leq \nicefrac{1}{8}$, we have
\begin{align}
    \mathbb{E}\left[ \exp\left\{ \lVert \mathbf{X} \bs\varepsilon \rVert_2^2 \right\} \right] \leq \exp\left\{ 10 \lVert \mathbf{X} \rVert_F^2 \right\}.
\end{align}
\end{lem}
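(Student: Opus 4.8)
The plan is to view $\lVert \mathbf{X}\bs\varepsilon\rVert_2^2=\bs\varepsilon^\top\mathbf{X}^\top\mathbf{X}\bs\varepsilon$ as a Rademacher chaos of order two and to control its moment generating function at the point $1$ by linearising the square and passing to a Gaussian quadratic form, which is explicitly computable. First I would use the identity $\E_{\bs g}\big[\exp(\langle z,\bs g\rangle)\big]=\exp(\lVert z\rVert_2^2/2)$, valid for every fixed $z\in\R^d$ and $\bs g\sim\mathcal N(0,I_d)$, applied conditionally with $z=\sqrt2\,\mathbf{X}\bs\varepsilon$; after Fubini this gives
\[
\E\big[\exp(\lVert \mathbf{X}\bs\varepsilon\rVert_2^2)\big]=\E_{\bs g}\,\E_{\bs\varepsilon}\big[\exp\big(\sqrt2\,\langle \mathbf{X}^\top\bs g,\bs\varepsilon\rangle\big)\big].
\]
Next I would bound the inner Rademacher expectation by $\E_{\bs\varepsilon}[\exp(\langle v,\bs\varepsilon\rangle)]=\prod_i\cosh(v_i)\le\exp(\lVert v\rVert_2^2/2)$ with $v=\sqrt2\,\mathbf{X}^\top\bs g$, obtaining $\E[\exp(\lVert \mathbf{X}\bs\varepsilon\rVert_2^2)]\le\E_{\bs g}[\exp(\lVert \mathbf{X}^\top\bs g\rVert_2^2)]$. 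Then I would diagonalise the positive semidefinite matrix $\mathbf{X}\mathbf{X}^\top$: writing its eigenvalues as $\lambda_1,\dots,\lambda_d\ge0$, the quadratic form $\bs g^\top\mathbf{X}\mathbf{X}^\top\bs g$ has the same law as $\sum_j\lambda_j g_j^2$ for independent standard Gaussians $g_j$, where $\sum_j\lambda_j=\lVert\mathbf{X}\rVert_F^2$ and in particular each $\lambda_j\le\lVert\mathbf{X}\rVert_F^2\le1/8$.

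The last step is the chi-square computation. Since $2\lambda_j\le1/4<1$ we have $\E[\exp(\lambda_j g_j^2)]=(1-2\lambda_j)^{-1/2}$, hence $\E_{\bs g}[\exp(\lVert\mathbf{X}^\top\bs g\rVert_2^2)]=\exp\big(-\tfrac12\sum_j\ln(1-2\lambda_j)\big)$. Using the elementary bound $-\ln(1-x)\le2x$, valid on $[0,1/2]$, with $x=2\lambda_j$ gives $-\tfrac12\sum_j\ln(1-2\lambda_j)\le2\sum_j\lambda_j=2\lVert\mathbf{X}\rVert_F^2\le10\lVert\mathbf{X}\rVert_F^2$, which is the claimed inequality (in fact with a much smaller constant than $10$).

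I do not expect a genuine obstacle here; the only points requiring care are the interchange of the Rademacher and Gaussian integrations and the role of the hypothesis $\lVert\mathbf{X}\rVert_F^2\le1/8$, which is precisely what keeps every eigenvalue $\lambda_j$ strictly below $1/2$---so that the Gaussian quadratic-form moment generating functions are finite---and what puts $2\lambda_j$ inside the range where $-\ln(1-x)\le2x$ applies. One could instead invoke a Hanson--Wright type inequality, but the self-contained Gaussian comparison above is shorter and yields an explicit constant.
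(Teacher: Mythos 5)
Your proof is correct, and it takes a genuinely different route from the one in the paper. You linearise the square by introducing an auxiliary Gaussian vector ($e^{\lVert \mathbf{X}\bs\varepsilon\rVert_2^2}=\E_{\bs g}[e^{\sqrt2\langle \mathbf{X}\bs\varepsilon,\bs g\rangle}]$), swap the order of integration by Tonelli, absorb the Rademacher average with $\prod_i\cosh(v_i)\le e^{\lVert v\rVert_2^2/2}$, and finish with the exact chi-square moment generating function after diagonalising $\mathbf{X}\mathbf{X}^\top$; the hypothesis $\lVert\mathbf{X}\rVert_F^2\le 1/8$ forces every eigenvalue $\lambda_j\le 1/8$, so $2\lambda_j\le 1/4$ sits comfortably in the range where both $(1-2\lambda_j)^{-1/2}$ is finite and $-\ln(1-x)\le 2x$ applies. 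The paper instead splits $\lVert\mathbf{X}\bs\varepsilon\rVert_2^2\le 2\lVert\mathbf{X}\rVert_F^2+2(\lVert\mathbf{X}\bs\varepsilon\rVert_2-\lVert\mathbf{X}\rVert_F)_+^2$, invokes a concentration inequality for $\lVert\mathbf{X}\bs\varepsilon\rVert_2$ around $\lVert\mathbf{X}\rVert_F$ (the convex-Lipschitz/bounded-differences bound of Boucheron--Lugosi--Massart, Example~6.3), and integrates the resulting polynomial tail $(1+z)^{-1/(4\lVert\mathbf{X}\rVert_F^2)}$. Your argument is more self-contained (no external concentration result), and it actually yields the sharper bound $\exp\{2\lVert\mathbf{X}\rVert_F^2\}$, of which the stated constant $10$ is a weakening; the paper's route has the advantage of staying within the concentration-of-measure toolkit it uses elsewhere and of extending more readily to situations where an exact Gaussian comparison is unavailable. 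The only points to make explicit in a written version are the Tonelli justification (the integrand is nonnegative, so no integrability check is needed) and the observation $\lambda_j\le\sum_k\lambda_k=\lVert\mathbf{X}\rVert_F^2$ that you already note.
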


\begin{myproof}[Proof of \Cref{lem:Xepsilon_exp_ineq}]
 Using the fact that for any positive random variable $\eta$, its expectation can be written as
 $\mathbb{E}[\eta] = \int_0^\infty \mathbb{P}(\eta >z )\,dz$, we get
\begin{align}
\mathbb{E}\left[e^{\lVert  \mathbf{X} \bs{\varepsilon} \rVert_2^2} \right] &\leq e^{2 \lVert \mathbf{X} \rVert_F^2} \mathbb{E}\left[e^{2\left( \lVert \mathbf{X} \bs{\varepsilon} \rVert_2 - \lVert \mathbf{X}  \rVert_F \right)_{+}^2} \right]  \\
&\leq e^{2 \lVert \bX \rVert_F^2} \bigg(1 + \int_{0}^{+\infty} \mathbb{P}\Big( \lVert \mathbf{X} \bs{\varepsilon} \rVert_2 \geq  \lVert \mathbf{X}  \rVert_F + \sqrt{(\nicefrac{1}{2}) \ln(1+z)}  \Big)dz \bigg)
\end{align}
We apply the result from \cite[Example 6.3]{boucheron2013concentration} on the variables $\varepsilon_1 \bX_1, \dots, \varepsilon_m \bX_m$ which are independent zero-mean random variable : setting $c_i = 2 \lVert \bX_i \rVert_2$, we have $\nu = \lVert \mathbf{X} \rVert_F^2$ and therefore, for any $z >0$,
\begin{align}
    \mathbb{P}\Big( \lVert \mathbf{X} \bs{\varepsilon} \rVert_2 \geq  \lVert \mathbf{X}  \rVert_F + \sqrt{(\nicefrac{1}{2}) \ln(1+z)}  \Big) \leq \exp\left\{-\frac{\ln(1+z)}{4\lVert \mathbf{X}\rVert_F^2}\right\}
    = (1+z)^{-\nicefrac{1}{4\lVert \mathbf{X} \rVert_F^2}}\,.\label{exp:3}
\end{align}
Assuming that $\lVert\textbf{X} \rVert_F^2 < \nicefrac{1}{4}$, we can plug this in inequality
\eqref{exp:3} to get
\begin{align}
    \mathbb{E}\left[e^{\lVert  \mathbf{X} \bs{\varepsilon} \rVert_2^2} \right] &\leq e^{2 \lVert \mathbf{X} \rVert_F^2} \left(1 + \frac{4 \lVert \mathbf{X} \rVert_F^2}{1 - 4 \lVert \mathbf{X} \rVert_F^2}\right)\\
    &\leq \exp\left\{ 2 \lVert \mathbf{X} \rVert_F^2 + \frac{4 \lVert \mathbf{X} \rVert_F^2}{1 - 4 \lVert \mathbf{X} \rVert_F^2} \right\}
\end{align}
The RHS of the inequality can be large when $\lVert \mathbf{X} \rVert_F^2$ is close to $\nicefrac{1}{4}$. Restricting $\lVert \mathbf{X} \rVert_F^2 \leq \nicefrac{1}{8}$ we arrive at the desired inequality
$\mathbb{E}\left[e^{\lVert  \mathbf{X} \bs{\varepsilon} \rVert_2^2} \right] \leq \exp\left\{ 10 \lVert \mathbf{X} \rVert_F^2 \right\}$.
\end{myproof}

\begin{myproof}[Proof of \Cref{lem:first_expectation}]
Let us define $\Pi_{{\bs a}^\perp} = \mathbf{I}_d - \bs{a} {\bs a}^\top$ to be the projection matrix onto the orthogonal complement of the vector $a$ and set
\begin{align}
    \bs{w}_* =  \Pi_{{\bs a}^\perp}
    \mathbf{X}\bs{\varepsilon} + s \bs{a}.
\end{align}
One checks that $\bs{w}_*\in s\mathcal{V}$ is the maximizer of the quadratic function $G({\bs w}) = \bs{w}^\top \mathbf{X}\bs{\varepsilon} - \lVert {\bs w} \rVert^2/2$ over the set $s\mathcal{V}$. In addition,
\begin{align}
    G(\bs{w}_*) = \bs{w}_*^\top\mathbf{X}\bs{\varepsilon}-  \| {\bs w}_*\|_2^2/2 =  \frac{1}{2}\left(\big\| \Pi_{{\bs a}^\perp} \mathbf{X}\bs{\varepsilon}\big\|_2^2 + 2s  {\bs a}^\top
    \mathbf{X}\bs{\varepsilon}-  s^2\right).
\end{align}
Denoting by $T(\mu)$ the left hand side of \eqref{exp:1}, we arrive at
\begin{align}
    T(\mu) &\le
     e^{-\mu s^2/2} \mathbb{E}\big[\exp\big\{(\mu \big\|\Pi_{{\bs a}^\perp} \mathbf{X}\bs\varepsilon\big\|_2^2/2 + \mu s {\bs a}^\top\mathbf{X}\bs\varepsilon \big\}\big].
\end{align}
The fact that $\Pi_{{\bs a}^\perp}$ is a contraction and the
Cauchy-Schwarz inequality imply
\begin{align}
    T(\mu) &\le e^{-\mu s^2/2}\big( \mathbb{E}\big[\exp\big\{\mu \|\mathbf{X}\bs\varepsilon\|_2^2 \big\}\big]
    \mathbb{E}\big[\exp\big\{2\mu s  {\bs a}^\top\mathbf{X}\bs\varepsilon \big\}\big]\big)^{1/2}.
    \label{exp:2}
\end{align}
We bound separately the two last expectations.
For the first one,
since ${\mu}\lVert \mathbf{X}\rVert_F^2 \leq {\mu}
m B^2 \leq \nicefrac{1}{8}$,  we can apply~\Cref{lem:Xepsilon_exp_ineq}, conditionally to $\mathbf{X}$ and then integrate w.r.t.\ $\mathbf{X}$, to get
\begin{align}
     \mathbb{E}\big[\exp\big\{{\mu}\lVert \mathbf{X}\bs\varepsilon \rVert_2^2 \big\} \big]
     \leq \mathbb{E} \big[ \exp\big\{ 10\mu \lVert \mathbf{X} \rVert_F^2 \big\} \big]
     \leq \exp\big\{ 10 m \mu  B^2  \big\}.
\end{align}
We now bound the second expectation in the right-hand side of \eqref{exp:2}.
Using the fact that $\varepsilon_{1:m}$ are i.i.d. Rademacher random variables independent from $\mathbf{X}$, as well as the inequality $\cosh(x)\le e^{x^2/2}$, we arrive at
\begin{align}
    \mathbb{E}\big[\exp\big\{(2\mu s)  {\bs a}^\top
    \mathbf{X}\bs\varepsilon \big\}\big]
    \le \mathbb{E}\big[\exp\big\{2(\mu s)^2   \|\mathbf{X}^\top{\bs a}\|_2^2\big\}\big]
    \le \exp\big\{ 2(\mu s)^2 m B_{{\bs a}^\top \bX}^2 \big\}.
\end{align}
Grouping the bounds on these two expectations we obtain the stated inequality.
\end{myproof}

\paragraph{Bounding the sum of random variables with sub-exponential right tails}\label{par:sub_exponential}
\begin{lem}\label{lem:sub_exponential_bound}
Let $X_1, \dots, X_n$ be independent, non-negative, random variables such that there exists positives constants $c$ and $a_1, \dots, a_n$ such that
\begin{align}
    \mathbb{P}\left(X_i > x\right) \leq c e^{-\nicefrac{x}{a_i}}, \qquad x > 0, i=1, \dots, n.
\end{align}
Then, for any real positive $t$,
\begin{align}
    \mathbb{P}\left(\sum_{i=1}^n (X_i - \mathbb{E}X_i) > t\right) &\leq \exp\left(- \min\left(\frac{t^2}{8\lVert a\rVert_2^2}, \frac{t}{4\lVert a \rVert_{\infty}}\right) \right).\\
\end{align}
\end{lem}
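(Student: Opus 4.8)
The plan is to use a standard Bernstein-type argument via moment generating functions, exploiting the fact that a non-negative random variable with a sub-exponential right tail has well-controlled centered moments. First I would fix $i$ and observe that, because $\mathbb{P}(X_i > x) \le c e^{-x/a_i}$ for $x>0$, integration by parts on $\mathbb{E}[X_i^k] = \int_0^\infty k x^{k-1}\mathbb{P}(X_i > x)\,dx$ gives $\mathbb{E}[X_i^k] \le c\, a_i^k\, k!$ for every integer $k \ge 1$ (and $\mathbb{E} X_i \le c a_i$, as already noted in the proof of \Cref{lem:bandit_1}). From this I would derive a bound on the centered moments $\mathbb{E}[(X_i - \mathbb{E} X_i)^k]$, absorbing the constant $c$ and the mean correction into a slightly enlarged scale parameter, to obtain an inequality of the form $\mathbb{E}[(X_i - \mathbb{E} X_i)^k] \le \tfrac{1}{2} k!\, v_i\, b_i^{\,k-2}$ for suitable $v_i \asymp a_i^2$ and $b_i \asymp a_i$; this is exactly the classical Bernstein moment condition.

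Next, for $\lambda > 0$ small enough (specifically $\lambda b_i < 1$), the Bernstein moment condition yields the standard MGF bound
\begin{align}
\mathbb{E}\big[e^{\lambda(X_i - \mathbb{E} X_i)}\big] \le \exp\!\left(\frac{\lambda^2 v_i/2}{1 - \lambda b_i}\right).
\end{align}
By independence, multiplying over $i=1,\dots,n$ gives $\mathbb{E}[e^{\lambda \sum_i (X_i - \mathbb{E} X_i)}] \le \exp\big(\tfrac{\lambda^2 V/2}{1 - \lambda b}\big)$ with $V = \sum_i v_i \asymp \|a\|_2^2$ and $b = \max_i b_i \asymp \|a\|_\infty$. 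Applying Markov's inequality to $e^{\lambda \sum_i(X_i - \mathbb{E} X_i)}$ and optimizing over the admissible range of $\lambda$ produces the two-regime Bernstein tail: a sub-Gaussian regime $\exp(-t^2/(c_1 V))$ for small $t$ and a sub-exponential regime $\exp(-t/(c_2 b))$ for large $t$, combined as $\exp\big(-\min\{t^2/(c_1 V), t/(c_2 b)\}\big)$. The last step is purely bookkeeping: one tracks the absolute constants so that $c_1 V \le 8\|a\|_2^2$ and $c_2 b \le 4\|a\|_\infty$, which is where the specific numbers $8$ and $4$ in the statement come from (the factor $c$ in the tail hypothesis is harmless as long as it is absorbed correctly, possibly assuming $c \ge 1$ without loss of generality or noting it only inflates constants).

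The only delicate point is the first step: converting the raw tail bound $\mathbb{P}(X_i > x) \le c e^{-x/a_i}$ — which involves the \emph{uncentered} variable and carries the nuisance constant $c$ — into a clean Bernstein moment bound for the \emph{centered} variable $X_i - \mathbb{E} X_i$ with explicit constants compatible with the claimed $8$ and $4$. This requires care because centering can, in principle, worsen the moments, and because $c$ may exceed $1$; the fix is to note $\mathbb{E} X_i \le c a_i$ and use the elementary inequality $(X_i - \mathbb{E} X_i)^k \le 2^{k-1}(X_i^k + (\mathbb{E} X_i)^k)$ together with Jensen, so that everything reduces to the uncentered moment bound $\mathbb{E}[X_i^k] \le c\,a_i^k\,k!$ at the cost of replacing $a_i$ by a constant multiple of itself. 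Everything after that is the textbook Bernstein computation.
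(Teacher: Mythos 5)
Your skeleton (Chernoff bound, control of the log-moment-generating function via a Bernstein-type moment condition, two-regime optimization over $\lambda$) is the same as the paper's, and it would certainly yield a bound of the form $\exp(-\min(t^2/(c_1\lVert a\rVert_2^2),\,t/(c_2\lVert a\rVert_\infty)))$. The genuine weak point is exactly the one you flagged yourself: passing from the uncentered tail hypothesis to a centered Bernstein moment condition. The fix you propose, $\lvert X_i-\mathbb{E}X_i\rvert^k\le 2^{k-1}(X_i^k+(\mathbb{E}X_i)^k)$ together with $\mathbb{E}X_i\le ca_i$, already forces the Bernstein scale parameter up to $2a_i$ through the term $2^{k-1}\,\mathbb{E}X_i^k\le \tfrac{c}{2}\,k!\,(2a_i)^k$, and the mean term $(\mathbb{E}X_i)^k\le (ca_i)^k$ is \emph{not} dominated by $c\,a_i^k\,k!$ when $c>1$ (in the application $c=10.4$), so the scale is in fact inflated to order $c\,a_i$ unless one blows up the variance proxy instead. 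Since the sub-exponential branch of Bernstein's inequality scales inversely with that parameter, the target $t/(4\lVert a\rVert_\infty)$ is out of reach along this route; the final ``bookkeeping'' step as described does not close. (For calibration: even the paper's own computation produces $t^2/(8c\lVert a\rVert_2^2)$ in the Gaussian branch --- the $c$ is dropped in the lemma statement but reinstated where the lemma is invoked --- so the constants here are tight for the argument and cannot absorb an extra factor of $2c$.)

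The paper sidesteps centering entirely with a small but decisive trick. Writing $\psi_i(\lambda)=\ln\mathbb{E}e^{\lambda X_i}-\lambda\mathbb{E}X_i$ and applying $\ln u\le u-1$ gives
\begin{align}
\psi_i(\lambda)\;\le\; \mathbb{E}\big[e^{\lambda X_i}-\lambda X_i-1\big]\;=\;\sum_{p\ge 2}\frac{\lambda^p}{p!}\,\mathbb{E}X_i^p\;\le\; c\sum_{p\ge 2}(\lambda a_i)^p\;\le\; 2c(\lambda a_i)^2
\qquad\text{for }\lambda\le \frac{1}{2a_i},
\end{align}
where only the \emph{uncentered} moments $\mathbb{E}X_i^p\le c\,a_i^p\,p!$ (which you derived correctly) enter, and non-negativity of $X_i$ justifies the term-by-term expansion. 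This keeps the scale at exactly $a_i$ and the variance proxy at $2c\,a_i^2$; summing over $i$ and running your own optimization over $\lambda\in(0,1/(2\lVert a\rVert_\infty)]$ then gives the stated two-regime bound. I would replace your centering step by this inequality; the rest of your argument goes through verbatim.
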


\begin{myproof}
Defining $\psi_i(\lambda) \coloneqq \log \mathbb{E}e^{\lambda\left(X_i - \mathbb{E}X_i\right)}, i=1,\dots,n$, Markov inequality and the independence hypothesis give
\begin{align}\label{eq:chernoff}
    \mathbb{P}\left(\sum_{i=1}^n (X_i - \mathbb{E}X_i) > t\right) \leq \inf_{\lambda > 0} e^{-\lambda t}\prod_{i=1}^n e^{\psi_i(\lambda)}.
\end{align}
Using the inequality $\ln u \leq u - 1$ valid for any positive real $u$, we have
\begin{align}
    \psi_i(\lambda) \coloneqq \ln \mathbb{E}e^{\lambda X_i} - \lambda \mathbb{E}X_i \leq \mathbb{E}\left[e^{\lambda X_i} - \lambda X_i - 1 \right].
\end{align}
Let $\phi(u) = e^u - u -1$. The monotone convergence theorem guarantees that for any $\lambda > 0$,
\begin{align}
    \mathbb{E}\phi(\lambda X_i) = \sum_{p \geq 2} \frac{\lambda^p}{p!}\mathbb{E}X_i^p.
\end{align}
Since the $X_i$'s are non-negative, we have, for any integer $p\geq2$ and for any index $i=1,\dots,n$,
\begin{align}
    \mathbb{E}X_i^p &= \int_{0}^{+\infty} \mathbb{P}\left( X_i > t^{1/p}\right) dt \leq c p \int_0^{+\infty} t^{p-1} e^{-\nicefrac{t}{a_i}}dt = c a_i^p p!.
\end{align}
Therefore, for any $\lambda \in (0, \nicefrac{1}{2a_i})$
\begin{align}\label{eq:log_MFG}
    \psi_i(\lambda) \leq \mathbb{E}\phi(\lambda X_i) \leq 2c(\lambda a_i)^2
\end{align}
Plugging \eqref{eq:log_MFG} into \eqref{eq:chernoff} yields
\begin{align}
    \mathbb{P}\left(\sum_{i=1}^n (X_i - \mathbb{E}X_i) > t\right) \leq \inf_{\lambda \in (0, \nicefrac{1}{2a_i})} \exp\left( 2c\lVert a\rVert_2^2\lambda^2 - \lambda t \right).
\end{align}
The minimum is attained in $\lambda^* = \min\left( \frac{t}{4c \lVert a \rVert_2^2}, \frac{1}{2 \lVert a\rVert_{\infty}} \right)$ and yields the stated upper bound
\begin{align}
    \mathbb{P}\left(\sum_{i=1}^n (X_i - \mathbb{E}X_i) > t\right) &\leq \exp\left(- \min\left(\frac{t^2}{8\lVert a\rVert_2^2}, \frac{t}{4\lVert a \rVert_{\infty}}\right) \right).\\
\end{align}

\end{myproof}

\end{document}